\journal{Applied and Computational Harmonic Analysis}
\def\Psiec{\Psi \mathrm{ec}(\R^n)}
\def\fdeg{r}
\theoremstyle{plain}
\newtheorem{proposition}{Proposition}
\newtheorem{theorem}{Theorem}
\newtheorem{definition}{Definition}
\theoremstyle{remark}
\newtheorem{remark}{Remark}
\newtheorem{example}{Example}
\def\d{\mathrm{d}}
\def\dd{\mathrm{d}}
\def\Z{\mathbb{Z}}
\def\R{\mathbb{R}}
\def\M{\mathcal{M}}
\def\ip{\mathrm{i}}
\DeclareMathOperator*{\sign}{sgn}
\newcommand{\dpp}[2]{\ensuremath{\frac{\partial #1}{\partial #2}}}
\DeclarePairedDelimiterX\MeijerM[3]{\lparen}{\rparen}%
{\begin{smallmatrix}#1 \\ #2\end{smallmatrix}\delimsize\vert\,#3}     
\newcommand\MeijerG[8][]{%
  G^{\,#2,#3}_{#4,#5}\MeijerM[#1]{#6}{#7}{#8}}       
\newcommand\MeijerG*[7]{%
  G^{\,#1,#2}_{#3,#4}\MeijerM*{#5}{#6}{#7}}
\providecommand{\commentCL}[1]{}
\begin{document}

\begin{frontmatter}

\title{$\Psi$ec: A Local Spectral Exterior Calculus}

\author{Christian Lessig\fnref{cl}}
 \address{Otto-von-Guericke-Universit{\"a}t Magdeburg}
\fntext[cl]{christian.lessig@ovgu.de}



\begin{abstract}
  We introduce $\Psiec$, a discretization of Cartan's exterior calculus of differential forms using wavelets. 
  Our construction consists of differential $\fdeg$-form wavelets with flexible directional localization that provide tight frames for the spaces $\Omega^\fdeg(\R^n)$ of forms in $\R^2$ and $\R^3$. 
  By construction, the wavelets satisfy the de Rahm co-chain complex, the Hodge decomposition, and that the $k$-dimensional integral of an $\fdeg$-form is an $(\fdeg-k)$-form.
  They also verify Stokes' theorem for differential forms, with the most efficient finite dimensional approximation attained using directionally localized, curvelet- or ridgelet-like forms.
  The construction of $\Psiec$ builds on the geometric simplicity of the exterior calculus in the Fourier domain. 
  We establish this structure by extending existing results on the Fourier transform of differential forms to a frequency description of the exterior calculus, including, for example, a Plancherel theorem for forms and a description of the symbols of all important operators. 
\end{abstract}

\begin{keyword}
exterior calculus \sep wavelets, structure preserving discretization
\MSC[2010] 42C15, 42C40, 65T60, 58A10
\end{keyword}

\end{frontmatter}



\section{Introduction}
\label{sec:introduction}

The exterior calculus, first introduced by Cartan~\cite{Cartan1945}, provides a formulation of scalar and vector-valued functions that encodes their differential relationships in a coordinate-invariant language.
The central objects of the calculus are differential forms, which provide a covariant formalization of vector fields, and the exterior derivative, which is a first order differential operator  generalizing gradient, curl, and divergence.

The importance of the exterior calculus for the numerical solution of partial differential equations, even in $\R^n$ and with a flat metric, has been shown in various applications.
One of the first ones was electromagnetic theory where already in the 1960s it was realized~\cite{Yee1966} that the electric and magnetic fields have to be discretized differently to obtain satisfactory numerics.
Later, it was found that this could be understood by the electric and magnetic fields being differential forms of different degrees~\cite{Bossavit1997,Hiptmair2002}.
Motivated by these and analogous results in other fields, e.g. in geophysical fluid dynamics~\cite{Platzman1960,Arakawa1977,Arakawa1981}, various discrete realizations of Cartan's exterior calculus have been proposed, e.g.~\cite{Bochev2006,Desbrun2006,Arnold2006,Ringler2010,Rufat2014,DeGoes2016}. 
These have been applied to applications such as elasticity~\cite{Arnold2006}, fluid mechanics~\cite{Elcott2007,Pavlov2011}, climate simulation~\cite{Arakawa1981,Ringler2010,Dubos2013}, magnetohydrodynamics~\cite{Gawlik2011}, electromagnetic theory~\cite{Bossavit1997}, and geometry processing~\cite{DeGoes2013}.

In the present work, we propose $\Psiec$, an alternative discretization of Cartan's exterior calculus that is defined over a family of differential form wavelets with flexible directional localization.
Towards its construction, we extend existing results from the theoretical physics literature and develop a description of the exterior calculus in the Fourier domain.
This shows that under the Fourier transform it becomes a chain complex whose structure is encoded in a simple, geometric way in spherical coordinates in frequency space.
The exterior derivative, for instance, acts  only on the radial component of a differential form in  frequency space. 
We therefore discretize the exterior calculus in the Fourier domain using spherical coordinates  and attain space-frequency localization by using polar wavelet windows~\cite{Unser2013,Lessig2018a} that respect the spherical coordinate structure. 
The construction yields differential form wavelets well localized in space and frequency that obey by construction the de Rahm complex and the distinction between exact and co-exact forms, i.e. the Hodge-Helmholtz decomposition.
At the same time, the definition in spherical coordinates also allows for a flexible directional localization, between fully isotropic and curvelet- and ridgelet-like.
In contrast to most existing discretizations in the literature, the ``discrete'' differential form wavelets in our work are bona fide forms in the sense of the continuous theory. 
Hence, all operations from the continuous setting are well defined.
A central question therefore becomes efficient numerical evaluation and we will at least partially address it in the present work.

The properties of $\Psiec$ and its differential form wavelets $\psi_{s,a}^{\fdeg,\nu,n}$, where $n = 2,3$ is the dimension of the space, $0 \leq \fdeg \leq n$ is the degree of the form, $\nu \in \{ \dd , \delta \}$ denotes if the wavelet is exact or co-exact, and $s = (j_s, k_s, t_s)$ describes level $j_s$, translation $k_s$ and orientation $t_s$, can be summarized as follows (see Sec.~\ref{sec:psiforms} for the precise technical statements):
\begin{itemize}
  \item The $\psi_{s,a}^{\fdeg,\nu,n}$ form tight frames for the homogeneous Sobolev spaces $\dot{L}_2^{\vert \nu \vert}(\Omega_{\nu}^{\fdeg},\R^n)$.
  \item The differential form wavelets satisfy the exterior co-chain complex, i.e. $\dd \psi_{s,a}^{\fdeg,\delta,n} = \psi_{s,a}^{\fdeg+1,\dd,n}$ and $\dd \psi_{s,a}^{\fdeg,\dd,n} = 0$.
      Furthermore, the exterior derivative $\dd$ preserves the localization described by $s$.
  \item Stokes' theorem for differential forms $\alpha \in \Omega^{\fdeg}(\R^n)$,
    \begin{align*}
      \int_{\partial \M} \alpha = \int_{\M} \dd \alpha ,
    \end{align*}
    for $\M \subset \R^n$ a manifold, 
    holds for differential form wavelets as 
      \begin{align}
        \label{eq:intro:stokes_forms}
          \sum_{s} \alpha_s \int_{\R^n} \psi_{s}^{\fdeg-1,\delta} \wedge \chi_{_{\partial \M}}
  &= \sum_{s} \alpha_s \int_{\R^n} \psi_{s}^{\fdeg,\dd} \wedge \chi_{_{\M}} 
      \end{align}
      where the $\alpha_s$ are the frame coefficients of $\alpha \in \Omega^{\fdeg}(\R^n)$ in the differential form wavelet frame, and $\chi_{_{\M}}$ and $\chi_{_{\partial \M}}$ are the characteristic functions of $\M$ and its boundary $\partial \M$, respectively.
      When $\dim(\M) = n$, Stokes theorem therefore contains the approximation problem for cartoon-like functions (right hand side of Eq.~\ref{eq:intro:stokes_forms}).
      This has been studied extensively in the literature on curvelets, shearlets and related constructions, e.g.~\cite{Candes2005a,Candes2005b,Labate2005,Do2003}, and these provide for it quasi-optimal approximation rates. 
      Moreover, $\chi_{_{\partial \M}}$ is exactly the wavefront set associated with $\chi_{_{\M}}$.
  \item The Hodge dual $\star \, \psi_{s,a}^{k,\nu,n}$ has a simple and practical description.
  \item The fiber integral of $\psi_{s,a}^{\fdeg,\nu,n}$ along an arbitrary $p$-dimensional linear sub-manifold is again a differential form wavelet $\psi_{s',a}^{\fdeg-p,\nu,n}$ and with $s'$ preserving the localization of $s$.
  \item The Laplace-de Rahm operator $\Delta : \Omega^k \to \Omega^k$ has a closed form representation and Galerkin projection for the differential form wavelets $\psi_{s,a}^{\fdeg,\nu,n}$.
  \item The $\psi_{s,a}^{\fdeg,\nu,n}$ have closed form expressions in the spatial domain. This clarifies, e.g. how the angular localization in frequency space is carried over to the spatial domain.
\end{itemize}

Multiplicative operators in the exterior calculus, such as the wedge product or the Lie derivative, have currently no natural expression in our calculus. 
In numerical calculations, these can be determined using the transform method~\cite{Eliasen1970,Orszag1970}, i.e. with the multiplication being evaluated pointwise in space, which is facilitated by the closed form expressions that are available there.
A thorough investigation of this has, however, be left for future work.




The remainder of the work is structured as follows. 
After discussing related work in the next section, we present in Sec.~\ref{sec:polar_wavelets} some background on the polar wavelets that we use as localization windows in the frequency domain.
Afterwards we develop in Sec.~\ref{sec:exterior_calculus} the Fourier transform of differential forms.
In Sec.~\ref{sec:psiforms} we introduce our differential form wavelets and study how the exterior calculus, including the exterior derivative, the wedge product, Hodge dual, and Stokes theorem, can be expressed using these.
In Sec.~\ref{sec:conclusion} we summarize our work and discuss directions for future work.
Our notation and conventions are summarized in~\ref{sec:preliminaries}.

\section{Related Work}
\label{sec:related}

Our work builds on ideas from what are usually two distinct fields: firstly, wavelet theory and in particular wavelets constructed in polar and spherical coordinates, and, secondly, exterior calculus and its discretizations.
We will discuss relevant literature in these fields in order.

\paragraph{Polar Wavelets}

The idea to combine space-frequency localization with a description in polar coordinates in the Fourier domain goes back at least to work by Fefferman in the theory of partial differential equations, in particular on the second dyadic decomposition, cf.~\cite[Ch. IX]{Stein1993}.
It was re-introduced multiple times, e.g. in work on steerable wavelets~\cite{Freeman1991,Perona1991} and curvelets~\cite{Candes1999b,Candes2005a,Candes2005b} where the motivation was the analysis of directional features in images.
A framework that encompasses the latter two approaches was proposed by Unser and co-workers~\cite{Unser2010,Unser2011,Unser2013,Ward2014} and this defines the polar wavelets that are the scalar building block for the construction of $\Psiec$.
We use the term `polar wavelet', or short `polarlet', since it encapsulates what distinguishes the construction from other multi-dimensional wavelets and what is the key to their utility in our work.

\paragraph{Curl and Divergence Free Wavelets}

Various constructions for curl and divergence free wavelets have been proposed over the years, e.g.~\cite{LemarieRieusset1992,Battle1993,Suter1995,Urban1995,Deriaz2009a,Bostan2015}, and these are related to our differential form wavelets by the musical isomorphism.
Based on seminal work by Lemair{\'e}-Rieusset~\cite{LemarieRieusset1992}, various authors, e.g.~\cite{Jouini1993,Stevenson2011a,Stevenson2011b,Stevenson2016a,Kadriharouna2019} constructed coupled scalar wavelet bases for $1$-dimensional domains that satisfy a differential relationship so that the derivative of one yields (up to a constant) the other or a corresponding integral relationship holds.
These constructions can be extended to curl or divergence free wavelets using the coordinate representations of the differential operators of vector calculus. 
Our differential form wavelets satisfy a similar coupling under differentiation. 
However, our construction is inherently multi-dimensional and we exploit the simple structure of the differential operators in the frequency domain.
The latter has also been used in the work by Bostan, Unser and Ward~\cite{Bostan2015} although divergence freedom is there only enforced numerically.
A precursor to the presented work is those in~\cite{Lessig2018z}.
There, polar wavelets are used as scalar window functions for the construction of polar divergence free wavelets and it is exploited that a vector field is divergence free when its Fourier transform is tangential to the frequency sphere.

\paragraph{Discretizations of Exterior Calculus}

Discretizations of Cartan's exterior calculus try to preserve important structures of the continuous theory, e.g. the de Rahm co-chain complex or Stokes' theorem, to improve the accuracy and robustness of numerical calculations. 
Different precursors those good numerical performance can, at least in hindsight, be explained through their adherence to the exterior calculus, exist.
For example, spectral discretizations in geophysical fluid dynamics, first introduced in the 1950s and 1960s, yield energy and enstrophy conserving time integrators~\cite{Platzman1960} by implicitly using what one could call a spectral exterior calculus~\cite{Silva2020}.
In the 1970s, Arakawa and Lamb~\cite{Arakawa1977,Arakawa1981} introduced the now eponymous grid-based discretizations for geophysical fluid dynamics to extend energy and enstrophy conservation to finite difference methods, again without an explicit connection to Cartan's calculus but implicitly respecting its structure.
A few years later, N{\'e}d{\'e}lec~\cite{Nedelec1980,Nedelec1986}, introduced consistent discretizations for the spaces $H(\mathrm{curl})$ and $H(\mathrm{div})$ that respect important properties of the classical differential operators of vector calculus, and hence also of the exterior derivative.
The first work that, to our knowledge, made an explicit connection between exterior calculus and numerics was those by Bossavit~\cite{Bossavit1997} in computational electromagnetism who realized that the use of differential forms and the calculus defined on them enables one to systematically understand and extend earlier results that provided good numerical performance for problems in computational electromagnetism~\cite{Yee1966}.

Most existing discretizations of exterior calculus are finite element-based, such as finite element exterior calculus developed by Arnold and collaborators~\cite{Arnold2006,Arnold2018}, discrete exterior calculus by Hirani, Desbrun and co-workers~\cite{Hirani2003,Desbrun2006}, mimetic discretizations by Bochev and Hyman~\cite{Bochev2006}, and the TRiSK scheme by Thuburn, Ringler, Klemp, and Skamarock~\cite{Thuburn2009,Ringler2010}.
These works all have in common that they construct a discrete analogue of Cartan's exterior calculus that satisfies important structural properties of the continuous theory, e.g. in that it is a co-chain complex for a discrete exterior derivative or that a discrete Stokes' theorem holds. 
Our construction, in contrast, is best seen as a multi-resolution projection of the continuous theory that preserve its intrinsic structure.
In particular, our differential form wavelets remain genuine forms in the sense of the continuous theory. 
All operators from there are thus naturally defined.
Compared to finite element type discretizations where one seeks discrete definitions mimicking the original theory, the questions shifts in our work thus to how these can be computed efficiently numerically or how this can be ensured in the construction of the wavelets.

Wavelets and exterior calculus have so far only been combined in two works and again within a finite element framework.
Dubos, Kevlahan and collaborators~\cite{Dubos2013,Aechtner2015,Kevlahan2019} extended the TRiSK scheme to a multi-resolution formulation using second generation wavelets and demonstrated that this provides an efficient discretization for geophysical fluid dynamics. 
In work contemporaneous to ours, Budninskiy, Owhadi, and Desbrun~\cite{Budninskiy2019} constructed operator-adapted wavelets for Discrete Exterior Calculus using the efficient algorithms proposed in~\cite{Owhadi2017}.
In contrast to $\Psiec$ whose construction relies on the structure of exterior calculus in the Fourier domain, the above works are defined purely in the spatial domain.

The extension of finite element-based discretizations of exterior calculus to spectral convergence rates has been considered by Rufat, Mason and Desbrun~\cite{Rufat2014}.
These authors accomplished it by introducing interpolation and histapolation maps to relate discrete exterior calculus and continuous forms. 
Gross and Atzberger presented a similar approach using hyper-interpolation for radial manifolds, i.e. manifolds that can be described as a height field over $S^2$.
A discretizations of exterior calculus for subdivision surfaces was introduced by de Goes et al.~\cite{DeGoes2016}.
It shares the multi-resolution structure with our approach and, at least in the limit case, also yields continuous differential forms.

Another recent discretization of exterior calculus is those by Berry and Giannakis~\cite{Berry2018} for manifold learning problems.
They employ the eigenfunctions of the Laplace-de Rahm operator as bases for their forms.
This is analogous to the spectral bases, formed by spherical harmonics, first used in geophysical fluid dynamics and that, as mentioned earlier, essentially form a spectral exterior calculus.
We also build on a spectral formulation but localize it in space and frequency and, furthermore, exploit the additional structure afforded in the Euclidean setting. 


\section{Polar Wavelets}
\label{sec:polar_wavelets}

Polar wavelets are a family of wavelets defined in polar or spherical coordinates in the Fourier domain.
Their construction uses a compactly supported radial window, $\hat{h}( \vert {\xi} \vert)$, which controls the overall frequency localization, and an angular one, $\hat{\gamma}(\bar{\xi})$, which controls the directionality, where $\bar{\xi} = \xi / \vert \xi \vert$.
The mother wavelet is thus given by $\hat{\psi}({\xi}) = \hat{\psi}(\bar{\xi}) \, \hat{h}( \vert {\xi} \vert )$ with the whole family of functions being generated by dilation, translation and rotation.

In two dimensions, the angular window can be described with a Fourier series in the polar angle $\hat{\theta}$.
A polar wavelet is there hence given by
  \label{eq:polarlets:2d}
\begin{align}
  \label{eq:polarlets:2d:hat}
  \hat{\psi}_s({\xi})
  \equiv \hat{\psi}_{jkt}({\xi})
  = \frac{2^j}{2\pi} \Big( \sum_{n = -N_j}^{N_j} \beta_{n}^{j,t} \, e^{i n \hat{\theta}} \Big) \, \hat{h}(2^{-j} \vert {\xi} \vert ) \, e^{-i \langle {\xi} , 2^j {k}\rangle}
\end{align}
where the multi-index $s = (j,k,t)$ describes level $j$, translation $k$, and orientation $t$ and we will write $(j_s,k_s,t_s)$ whenever confusion might arise.
The coefficients $\beta_{n}^{j,t}$ control the angular localization and in the simplest case $\beta_{n}^{j,t} = \delta_{n 0}$ for all $j$, $t$ and one has isotropic, bump-like wavelet functions.
Conversely, when the support of the $\beta_{n}^{j,t}$ are the entire integers in $n$, then one can describe compactly supported angular windows.
The above formulation then also encompasses second generation curvelets~\cite{Candes2005b} and provides more generally a framework to realize $\alpha$-molecule-like constructions~\cite{Grohs2013,Grohs2014,Grohs2016a}.

A useful property of polar wavelets is that their spatial representation, given by inverse Fourier transform $\mathfrak{F}^{-1}$ of Eq.~\ref{eq:polarlets:2d:hat}, can be computed in closed form.
Using the Fourier transform in polar coordinates, cf.~\ref{sec:preliminaries:fourier:polar}, one obtains~\cite{Lessig2018a}
\begin{align}
  \label{eq:polarlets:2d:space}
  \psi_{s}({x})
  \equiv \psi_{jkt}({x})
  = \frac{2^j}{2\pi} \sum_{n=-N_j}^{N_j} i^n \, \beta_{n}^{j,t} \, e^{i n \theta_{{x}}} \, h_n( \vert 2^{j} {x} - {k}\vert )
\end{align}
where $h_n(\vert {x} \vert)$ is the Hankel transform of $\hat{h}(\vert {\xi} \vert)$ of order $n$.
When $\hat{h}(\vert {\xi} \vert)$ is the window proposed for the steerable pyramid~\cite{Portilla2000}, then $h_n(\vert {x} \vert)$ also has a closed form expression~\cite{Lessig2018a}.
Note also that the angular localization described by the $\beta_{n}^{j,t}$ is essentially invariant under the Fourier transform and only modified by the factor of $i^n$ that implements a rotation by $\pi / 2$.

For applications, the wavelets in Eq.~\ref{eq:polarlets:2d} are augmented using scaling functions $\phi_{j,k}$.
They are isotropic and defined using a radial window $\hat{g}(\vert \xi \vert)$, i.e. $\phi_{j,0} = \mathfrak{F}^{-1}(\hat{g})$, that is chosen such that the $\phi_{j,k}$ represent a signal's low frequency part.
To simplify notation, we follow the usual convention and write $\psi_{-1,k} \equiv \phi_{0,k}$.
With the scaling functions, the following result holds~\cite{Unser2013,Lessig2018z}.

\begin{proposition}
  \label{prop:polarlet:2d:parseval}
  Let $U_j \in \R^{T_j \times 2 N_j + 1}$ be the matrix formed by the angular localization coefficients $\beta_{n}^{j,t} = \beta_{n}^{j} \, e^{-i n t (2\pi / T_{j})}$ for the $T_j$ different orientations, and let $D_j \in \R^{2 N_j + 1 \times 2 N_j + 1}$ be a diagonal matrix.
When the Calder{\'o}n admissibility condition $\big\vert \hat{g}( \vert \xi \vert ) \big\vert^2 + \sum_{j = 0}^{\infty} \big\vert \hat{h}( 2^{-j} \vert \xi \vert ) \big\vert^2 = 1$, $\forall \xi \in \widehat{\mathbb{R}}^2$, is satisfied and $U_j^H U_j = D_j$ with $\mathrm{tr}( D_j ) = 1$ for all levels $j$,
then any $f \in L_2(\mathbb{R}^{2})$ has the representation
\begin{align*}
  f(x) = \sum_{j = -1}^{\infty} \sum_{k \in \mathbb{Z}^2} \sum_{t=1}^{T_j} \big\langle f(y) , \psi_{j,k,t}(y) \big\rangle \, \psi_{j,k,t}(x)
\end{align*}
with frame functions
\begin{align*}
  \psi_{j,k,t}(x) = \frac{2^j}{2\pi} \, \psi\big( R_{2\pi t / T_j} (2^j x - k) \big)
\end{align*}
where $\psi(x)$ is given by Eq.~\ref{eq:polarlets:2d:space} and $R_{2\pi t / T_j}$ is the rotation by $2\pi t / T_j$.
\end{proposition}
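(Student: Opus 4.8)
The plan is to show that the analysis--synthesis (frame) operator
$Sf := \sum_{j\ge -1}\sum_{k\in\Z^2}\sum_{t=1}^{T_j}\langle f,\psi_{j,k,t}\rangle\,\psi_{j,k,t}$
is the identity on $L_2(\R^2)$; the stated reconstruction formula is then precisely $Sf = f$. Since $\hat g$ and $\hat h$ are bounded and the Calder\'on condition caps the radial profile at $1$, a routine estimate shows that $\{\psi_{j,k,t}\}$ is a Bessel sequence, so $S$ is bounded; it therefore suffices to verify $\langle Sf,g\rangle = \langle f,g\rangle$ for $f,g$ in the dense subspace of functions whose Fourier transforms are smooth and compactly supported, on which all the interchanges of summation and integration below are justified.

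First I would pass to the Fourier domain by Plancherel and, for each fixed level $j$ and orientation $t$, collapse the sum over the translation lattice $k\in\Z^2$ with the Poisson summation formula. Every $\psi_{j,k,t}$, $k\in\Z^2$, is an integer translate (after the dilation by $2^j$ and the rotation $R_{2\pi t/T_j}$) of a single generator whose Fourier transform, by Eq.~\ref{eq:polarlets:2d:hat}, is up to normalization $b_{j,t}(\hat\theta)\,\hat h(2^{-j}|\xi|)$ with angular factor $b_{j,t}(\hat\theta) := \sum_n \beta_n^{j,t}\,e^{in\hat\theta}$. Because this window is band-limited to a single dyadic annulus, the translates of its frequency support under the relevant dual lattice are pairwise disjoint, so the $k$-sum contributes exactly multiplication by $|b_{j,t}(\hat\theta)|^2\,|\hat h(2^{-j}|\xi|)|^2$, the $2^j$ and $2\pi$ factors from the wavelet normalization, from Plancherel and from the Poisson weights cancelling. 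With the $j=-1$ term carrying $\hat g$ in place of $\hat h(2^{-j}\cdot)$ this yields
\[
  \widehat{Sf}(\xi) = \sum_{j\ge -1}\biggl(\,\sum_{t=1}^{T_j}|b_{j,t}(\hat\theta)|^2\biggr)\,w_j(|\xi|)^2\,\hat f(\xi),
\]
where $w_{-1} := \hat g$ and $w_j := \hat h(2^{-j}\cdot)$ for $j\ge 0$.

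The core of the argument is the separation of the angular and radial contributions. Expanding the modulus gives $\sum_{t=1}^{T_j}|b_{j,t}(\hat\theta)|^2 = \sum_{m,n} e^{i(n-m)\hat\theta}\,(U_j^H U_j)_{mn}$, where $U_j$ is the matrix of the coefficients $\beta_n^{j,t}$. By hypothesis $U_j^H U_j = D_j$ is diagonal, so only the terms with $m = n$ survive, the $\hat\theta$-dependence disappears, and $\sum_{t=1}^{T_j}|b_{j,t}(\hat\theta)|^2 = \tr(D_j) = 1$ for every $j$ (for $j=-1$ this is merely the normalization of the isotropic scaling window). What remains is the Calder\'on admissibility identity $|\hat g(|\xi|)|^2 + \sum_{j\ge 0}|\hat h(2^{-j}|\xi|)|^2 = 1$, so $\widehat{Sf} = \hat f$, which is the assertion.

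The step I expect to be the main obstacle is the Poisson-summation reduction: one has to confirm that with the translation lattice actually in force the band-limited radial windows produce genuinely no aliasing, and that the accumulated $2^j/2\pi$ normalizations, Plancherel constants and Poisson weights multiply to exactly $1$. This is the one place where the geometry of the annular frequency supports enters, rather than only the two algebraic hypotheses on $U_j$ and on the pair $(\hat g,\hat h)$; once it is settled, the remainder is the short identity $\sum_t|b_{j,t}(\hat\theta)|^2 = \tr(D_j) = 1$ combined with the admissibility condition. These computations are carried out in detail in~\cite{Unser2013,Lessig2018z}.
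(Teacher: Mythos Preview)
Your proposal is correct and follows essentially the same approach the paper takes: the paper does not give a self-contained proof of this 2D proposition but instead cites \cite{Unser2013,Lessig2018z}, exactly as you do, and for the analogous 3D statement (Proposition~\ref{prop:tight_frame:3d}) it proceeds just as you outline---reduce to showing the Calder\'on-type identity $\sum_{j,t}\lvert\hat\psi_{j,t}\rvert^2=1$, separate the angular and radial factors, use the matrix condition $U_j^H U_j=D_j$ with $\tr D_j=1$ to collapse the angular sum to $1$, and finish with the radial Calder\'on condition. The only cosmetic difference is that the paper's 3D argument first treats the translation-continuous case and then invokes Shannon sampling to pass to the lattice, whereas you go straight to Poisson summation; these are equivalent once the radial window is bandlimited to a single dyadic annulus, which is precisely the no-aliasing point you flag.
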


Although the above frame is redundant, since it is Parseval tight it still affords most of the conveniences of an orthonormal basis.
In the following, we will often write $\mathcal{S}$ for the index set of all scales, translations, and rotations.

In three dimensions, polar wavelets are defined analogous to Eq.~\ref{eq:polarlets:2d:hat} by
\begin{align}
  \label{eq:polarlets:3d:hat}
  \hat{\psi}_{s}({\xi}) \equiv \hat{\psi}_{j,k,t}({\xi})
  = \frac{2^{3j/2}}{(2\pi)^{3/2}} \sum_{l=0}^{L_j} \sum_{m=-l}^l \kappa_{lm}^{j,t} \, y_{lm}(\bar{{\xi}}) \, \hat{h}(2^{-j} \vert {\xi} \vert) \, e^{-i \langle {\xi} , 2^j {k}\rangle}
\end{align}
where the directional localization window $\hat{\gamma}(\bar{\xi})$ is now expressed using the spherical harmonics $y_{lm}$ and the $\kappa_{lm}^{j,t}$ are the expansion coefficients; see~\ref{sec:preliminaries:sh} for a brief summary of harmonic analysis of the sphere.
The wavelets have again a closed form expression in the spatial domain,
\begin{align}
  \label{eq:polarlets:3d:space}
  \psi_{s}(x) \equiv \psi_{j,k,t}(x)
  = \frac{2^{3j/2}}{(2\pi)^{3/2}} \sum_{l=0}^{L_j} \sum_{m=-l}^l i^l \kappa_{lm}^{j,t} \, y_{lm}(\bar{x}) \, h_l(\vert 2^{j} {x} - k \vert) .
\end{align}
which can be obtained using the Rayleigh formula, cf.~\ref{sec:preliminaries:fourier:polar}.
Furthermore, the wavelets can again be augmented using scaling functions to represent a signal's low frequency part.
Then the following result holds~\cite{Ward2014,Lessig2018z} (see again~\ref{sec:preliminaries:sh} for notation).

\begin{proposition}
  \label{prop:tight_frame:3d}
  Let $w_{j,t} \in \R^{(L_j+1)^2}$ be the vector formed by the rotated angular localization coefficients $\kappa_{lm}^{j,t} = \sum_{m' = -l}^l W(\lambda_{j,t})_{l,m}^{m'} \, \kappa_{l,m'}$ for a localization window centered at $\lambda_{j,t}$, where $W(\lambda_{j,t})_{l,m}^m$ is the Wigner-D matrix implementing rotation in the spherical harmonics domain, and let $G^{lm} \in \R^{(L_j+1)^2 \times (L_j+1)^2}$ be the matrix formed by the spherical harmonics product coefficients for fixed $(l,m)$.
  When the Calder{\'o}n condition $\big\vert \hat{g}( \vert \xi \vert ) \big\vert^2 + \sum_{j=0}^{\infty} \big\vert \hat{h}( 2^{-j} \vert \xi \vert ) \big\vert^2 = 1$, $\forall \xi \in \widehat{\mathbb{R}}^3$ is satisfied and $\delta_{l,0} \delta_{m,0} = \sum_{t=0}^{T_j} w_{j,t} \, G^{lm} \, w_{j,t}$ for all $j$, $t$ then any $f(x) \in L_2(\mathbb{R}^{3})$ has the representation
  \begin{align*}
     f(x) = \sum_{j=-1}^{\infty} \sum_{k \in \mathbb{Z}^3} \sum_{t=1}^{T_j} \big\langle f(y) \, , \, {\psi}_{j,k,t}(y) \big\rangle \, {\psi}_{j,k,t}(x)
  \end{align*}
  with frame functions
  \begin{align*}
    {\psi}_{j,k,t}(x) = \frac{2^{3j/2}}{(2\pi)^{3/2}} \, {\psi} \big( R_{\lambda_{j,t}} (2^{j} x -  k ) \big) ,
  \end{align*}
  for $\psi(x)$ defined in Eq.~\ref{eq:polarlets:3d:space} and $R_{\lambda_{j,t}}$ the rotation from the North pole to $\lambda_{j,t}$.
\end{proposition}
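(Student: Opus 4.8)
The plan is to follow the route of Proposition~\ref{prop:polarlet:2d:parseval} but with the angular localization now carried by spherical harmonics: pass to the Fourier domain, sum over the translation lattice to reduce the Parseval property to a pointwise multiplier identity on $\widehat{\R}^3$, split that identity into a radial factor handled by the Calder\'on condition and an angular factor on $S^2$, and finally recognize the hypothesis on the matrices $G^{lm}$ as precisely the statement that the angular factor collapses onto the constant mode.

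Concretely, by Plancherel it suffices to show that the frame operator equals the identity, equivalently that $\sum_{j,k,t}|\langle f,\psi_{j,k,t}\rangle|^2 + \sum_{j,k}|\langle f,\phi_{j,k}\rangle|^2 = \|f\|_2^2$. Writing $\hat\psi_{j,t}(\xi) = \gamma_{j,t}(\bar\xi)\,\hat h(|\xi|)$ with $\gamma_{j,t}(\bar\xi) = \sum_{l,m}\kappa_{lm}^{j,t}\,y_{lm}(\bar\xi)$, the Fourier transform of $\psi_{j,k,t}$ is this profile dilated by $2^{-j}$ and modulated by a character of the (rotated) lattice $2\pi 2^j\Z^3$; since $\hat h$ is supported in a fixed dyadic annulus inside the ball of radius $\pi$, its $2^j$-dilate fits within a fundamental domain of that lattice, so summing over $k\in\Z^3$ and applying Parseval on the torus gives $\sum_k|\langle f,\psi_{j,k,t}\rangle|^2 = \int|\hat f(\xi)|^2\,|\hat\psi_{j,t}(2^{-j}\xi)|^2\,d\xi$, with the normalization $2^{3j/2}/(2\pi)^{3/2}$ chosen exactly so that no extra constant appears, and likewise for the isotropic scaling functions. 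Summing over $j$ and $t$, the Parseval identity becomes the multiplier identity
\[
  |\hat g(|\xi|)|^2 + \sum_{j\ge0}\sum_{t}|\gamma_{j,t}(\bar\xi)|^2\,|\hat h(2^{-j}|\xi|)|^2 = 1,\qquad\forall\,\xi\in\widehat{\R}^3 .
\]
Since the radial and angular variables separate, the Calder\'on admissibility condition accounts for the radial sums, and it remains only to prove $\sum_{t}|\gamma_{j,t}(\bar\xi)|^2 = 1$ on $S^2$ for each level $j$.

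For the angular identity I would expand $|\gamma_{j,t}(\bar\xi)|^2 = \sum_{l,m}\sum_{l',m'}\kappa_{lm}^{j,t}\,\overline{\kappa_{l'm'}^{j,t}}\,y_{lm}(\bar\xi)\,\overline{y_{l'm'}(\bar\xi)}$ and linearize the product of two spherical harmonics, $y_{lm}\,\overline{y_{l'm'}} = \sum_{l'',m''}c^{l''m''}_{lm,l'm'}\,y_{l''m''}$, where the Gaunt/Clebsch--Gordan coefficients for a fixed output index $(l'',m'')$ are the entries of $G^{l''m''}$. This gives
\[
  \sum_{t}|\gamma_{j,t}(\bar\xi)|^2 = \sum_{l,m}\Big(\sum_{t} w_{j,t}\,G^{lm}\,w_{j,t}\Big)\,y_{lm}(\bar\xi),
\]
so the required identity holds if and only if $\sum_t w_{j,t}\,G^{lm}\,w_{j,t} = \delta_{l,0}\,\delta_{m,0}$, i.e. all non-constant modes vanish and the constant mode is normalized to $1$ — which is exactly the hypothesis. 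That each orientation window is a bona fide rotation of a single base window $\gamma_j = \sum_{l,m'}\kappa_{l,m'}y_{l,m'}$ is built in through $\kappa_{lm}^{j,t} = \sum_{m'}W(\lambda_{j,t})_{l,m}^{m'}\kappa_{l,m'}$, since the Wigner-D matrix represents the rotation $R_{\lambda_{j,t}}$ on each degree-$l$ subspace, so $\gamma_{j,t} = \gamma_j\circ R_{\lambda_{j,t}}^{-1}$; undoing the Fourier transform with the Rayleigh formula (Eq.~\ref{eq:polarlets:3d:space}) then puts the frame functions in the stated form $\psi_{j,k,t}(x) = \frac{2^{3j/2}}{(2\pi)^{3/2}}\,\psi(R_{\lambda_{j,t}}(2^jx-k))$.

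The main obstacle is the angular bookkeeping: one must verify carefully that packaging the Gaunt coefficients as $G^{lm}$ really does collapse $\sum_t|\gamma_{j,t}|^2$ onto the constant mode, keeping track of the $4\pi$-type normalization of the $y_{lm}$ so that the $(0,0)$ component equals $1$ exactly, and one must note that the product $y_{lm}\overline{y_{l'm'}}$ carries degrees up to $2L_j$, so the hypothesis has to force every one of those higher modes — not only those of degree $\le L_j$ — to vanish. A secondary technical point is the torus-Parseval step, which needs $\hat h(2^{-j}\cdot)$ to lie inside a fundamental domain of the translation lattice; this is where the compact-support assumption on the radial window is used, exactly as in the two-dimensional case.
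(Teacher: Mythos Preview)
Your proposal is correct and follows essentially the same route as the paper: reduce the Parseval property to a pointwise multiplier identity in the Fourier domain, separate radial and angular factors, invoke the Calder\'on condition for the radial part, and linearize $\sum_t|\gamma_{j,t}|^2$ via the spherical harmonics product coefficients to obtain exactly the $G^{lm}$ hypothesis. The only minor difference is that the paper first establishes the identity for the translation-continuous frame and then passes to the lattice by a Shannon-sampling argument (citing the bandlimitedness of $\hat h$), whereas you handle the discrete translations directly via torus-Parseval; these are equivalent, and your caveat about $\hat h(2^{-j}\cdot)$ fitting inside a fundamental domain is precisely what makes the two arguments agree.
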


See~\ref{sec:polarlets:admissibility:3d} for a proof of the admissibility conditions.
We refer to the original works~\cite{Unser2013,Ward2014} and~\cite{Lessig2018a} for a more detailed discussion of polar wavelets.

\section{Differential Forms and their Fourier Transform}
\label{sec:exterior_calculus}

In this section, we first recall basic facts about differential forms and the exterior calculus defined on them.
Our principle reference for this material will be the book by Marsden, Ratiu, and Abraham~\cite{Marsden2004} and we will also use the notation and conventions from there.
Afterwards, starting from existing results in theoretical physics~\cite{Voronov1988,Kalkman1993,Castellani2015a}, we will define and study the Fourier transform of the exterior calculus.

\subsection{Tangent and Cotangent Bundle}

The setting of our discussion in the following will be $\R^n$ with $n = \{ 2, 3 \}$.
Although the tangent bundle $T \R^n$ can be identified with $\R^n \times \R^n$, for our work it is of importance to consistently distinguish $\R^n$ and the tangent and cotangent bundles, $T \R^n$ and $T^* \R^n$, respectively.
We will denote a generic basis for the tangent space as
\begin{align*}
 \mathrm{span}\left\{ \frac{\partial}{\partial x^1} , \cdots ,  \frac{\partial}{\partial x^n} \right\}  = T_x \R^n
\end{align*}
with the $\partial / \partial x^i$ in general being dependent on the fiber $T_x \R^n$ they span (such as in spherical coordinates).
The $\partial / \partial x^i$ induce a dual basis $\dd x^i$ for the cotangent bundle $T^* \R^n$ through the fiberwise duality condition $\dd x^i \left( \partial / \partial x^j \right) = \delta_{ij}$.
We will also typically use the Einstein summation convention over repeated indices and hence employ ``upstairs'' indices for components of contra-variant vectors in $T_x \R^n$ and ``downstairs'' ones for covariant vectors in $T_x^* \R^n$.
For example, a vector field $X \in \mathfrak{X}(\R^n)$ is in coordinates given by $X^i \, \partial / \partial x^i$ and a co-vector field by $\alpha_i \, \dd x^i$ (summation implied).

\subsection{Differential Forms and the Exterior Calculus}

A differential $\fdeg$-form $\alpha \in \Omega^{\fdeg}(\R^n)$ on $\R^n$, with $0 \leq \fdeg \leq n$, is a covariant, anti-symmetric tensor field of rank $\fdeg$, i.e. an anti-symmetric, multi-linear functional on the $\fdeg^{\textrm{th}}$ power of the tangent bundle.
When $\fdeg=0$ this corresponds to functions, i.e. $\Omega^0(\R^n) \cong \mathcal{F}(\R^n)$, and for $\fdeg=n$ one has densities, e.g. $f(x) \, \dd x$.
All other degrees $\fdeg$ correspond to ``vector-valued'' fields.
For example, a $1$-form is a co-vector field, i.e. a section of the cotangent bundle $T^* \R^n$, and since it is a functional on $T \R^n$ it naturally pairs with the tangent vector field of a curve.
A $2$-form, correspondingly, pairs with the tangent vectors of a $2$-manifold and in $\R^3$ it can hence be thought of as an infinitesimal area.
The result of the pairing between an $\fdeg$-form and the tangent vectors of an $\fdeg$-manifold can be integrated and one thus also says that an $\fdeg$-form is to be integrated over an $\fdeg$-manifold~\cite{Desbrun2006,Frankel2011}.

A principal motivation for studying differential forms is that they are closed under the exterior derivative $\dd$,
\begin{align}
  \label{eq:exterior_derivative}
  \dd \, : \, \Omega^{\fdeg}(\R^n) \to \Omega^{\fdeg+1}( \R^n)
\end{align}
which provides a coordinate invariant derivative for tensor fields.
The exterior derivative furthermore satisfies $\dd \circ \dd = 0$.
The differential forms under $\dd$ hence form a co-chain complex known as the de Rahm complex,
\begin{align*}
  0 \xrightarrow[]{ \ \dd \ } \Omega^0(\R^n) \xrightarrow[]{ \ \dd \ } \cdots \xrightarrow[]{ \ \dd \ } \Omega^n(\R^n) \xrightarrow[]{ \ \dd \ } 0  .
\end{align*}
When a differential form $\alpha \in \Omega^{\fdeg}(\R^n)$ satisfies $\dd \alpha = 0$ then $\alpha$ is closed and when there exists a $\beta$ such that $\beta = \dd \alpha$ then $\alpha$ is  exact.
The exterior calculus also provides a product on differential forms known as the wedge product
\begin{align}
   \label{eq:def:wedge}
  \wedge : \Omega^{\fdeg} \times \Omega^l \to \Omega^{\fdeg+l} .
\end{align}
It turns the de Rahm complex into a graded algebra.

With the basis $\{ \dd x^i \}_{i=1}^n$ for the cotangent bundle $T^* \R^n$, the coordinate expression for a $1$-form $\alpha \in \Omega^1(\R^3)$,  i.e. a section of $T^* \R^n$, is
\begin{align*}
  \alpha(x) = \alpha_1(x) \, \dd x^1 + \cdots + \alpha_n(x) \, \dd x^n .
\end{align*}
The basis functions for higher degree forms are generated using the wedge product in Eq.~\ref{eq:def:wedge}, which ensures anti-symmetry.
In $\R^3$, for example, one has
\begin{subequations}
\begin{align*}
 \Omega^0(\R^3) \ni \ f &= f(x)
  \\
  \Omega^1(\R^3) \ni \ \alpha &= \alpha_1(x) \, \dd x^1 + \alpha_2(x) \, \dd x^2 + \alpha_3(x) \, \dd x^3
  \\
  \Omega^2(\R^3) \ni \ \beta &= \beta_1(x) \, \dd x^2 \wedge \dd x^3 + \beta_2(x) \, \dd x^3 \wedge \dd x^1 + \beta_3(x) \, \dd x^1 \wedge \dd x^2
  \\
  \Omega^3(\R^3) \ni \ \gamma &= \gamma(x) \, \dd x^1 \wedge \dd x^2 \wedge \dd x^3  .
\end{align*}
\end{subequations}
For an arbitrary form $\alpha \in \Omega^{\fdeg}(\R^n)$ the coordinate expression is
\begin{align*}
    \alpha(x) = \sum_{j_1 < \cdots < j_{\fdeg}} \alpha_{j_1, \cdots , j_{\fdeg}}(x) \, \dd x^{j_1} \wedge \cdots \wedge \dd x^{j_{\fdeg}}
\end{align*}
with the summation being over all ordered sequences $j_1 < \cdots < j_{\fdeg}$, which again ensures the anti-symmetry of the form.

Except for our discussions of Stokes' theorem at the end of this section and in Sec.~\ref{sec:psiforms:stokes}, we will assume that the coordinate functions $\alpha_{j_1, \cdots , j_{\fdeg}}(x)$ are smooth, or at least sufficiently smooth for the operation under consideration, and real-valued.
Differential forms with coordinate functions that are distributions in the sense of Schwartz were introduced by de Rahm~\cite{deRahm1984} and are known as `currents'.
We will denote the space of such $(n-r)$-forms as $\mathcal{S}'(\Omega^{n-\fdeg},\R^n)$ and they are dual to the space $\mathcal{S}(\Omega^{\fdeg},\R^n)$ of differential $r$-forms whose coordinate functions are test functions.
The duality pairing $\langle \! \langle \cdot , \cdot \rangle \! \rangle$ between $\mathcal{S}'(\Omega^{\fdeg},\R^n)$ and $\mathcal{S}(\Omega^{\fdeg},\R^n)$ is, as in the scalar case, analogous to the $L_2$-inner product for differential forms, which will be introduced in the next paragraph.

\paragraph{Hilbert Space Structure and Hodge Dual}

So far we did not require a metric and this is an important feature of the exterior calculus.
When a metric is available, as is the case in $\R^n$, the exterior calculus can be equipped with additional structure.
We will introduce this structure next.

Let a non-degenerate, symmetric, covariant tensor field $g \in \mathcal{T}_2^0(\R^n)$ of rank $2$ be a given metric and $g^{-1} \in \mathcal{T}_0^2(\R^n)$ be the associated co-metric acting on co-vectors; the components will be denoted as $g_{ij}$ and $g^{ij}$, respectively.
Unless mentioned otherwise, we will work with the canonical metric on $\R^n$ (which, however, is not the identity in spherical coordinates).

The metric $g$ and its dual $g^{-1}$ can be used to identify vectors and co-vectors by ``raising'' and ``lowering'' indices.
Let $X,Y \in \mathfrak{X}(\R^n)$ and $\alpha, \beta \in \mathfrak{X}^*(\R^n) \cong \Omega^1(\R^n)$.
Then
\begin{align*}
  X^{\flat}(Y) &\equiv g(X,Y) = g_{ij} X^i \, Y^i \ && \Leftrightarrow && (X^{\flat})_j = g_{ij} \, X^i \, \in \Omega^1(\R^n)
  \\[4pt]
  \alpha^{\sharp}(\beta) &\equiv g(\alpha,\beta) = g^{ij} \alpha_i \, \beta_j  \ && \Leftrightarrow && (\alpha^{\sharp})^j  = g^{ij} \, \alpha_i \in \mathfrak{X}(\R^n) .
\end{align*}
The flat operator $^{\flat}$ and the sharp operator $^{\sharp}$ introduced above are together known as musical isomorphism.
With these, the classical differential operators from vector calculus can be related to the exterior derivative.
Let $f \in \mathcal{F}(\R^3)$ and $u \in \mathfrak{X}(\R^3)$ be a vector field. Then
\begin{subequations}
  \label{eq:d:vector_calculus}
\begin{align}
  \nabla f &= ( \dd f )^{\sharp}
  \\
  \nabla \times u &= ( \star ( \dd u^{\flat}) )^{\sharp} .
  \\
  \nabla \cdot u &= \star \dd ( \star u^{\flat} ) .
\end{align}
\end{subequations}

The metric induces, next to the musical isomorphism, also an $L_2$-inner product $\langle \! \langle \cdot , \cdot \rangle \! \rangle$ for differential $\fdeg$-forms.
Let $\alpha, \beta \in \Omega^{\fdeg}( \R^n )$. Then
\begin{subequations}
  \label{eq:forms:L2}
\begin{align}
  \label{eq:forms:L2:1}
  \langle \! \langle \alpha , \beta \rangle \! \rangle
  = \int_{\R^n}  \alpha_{j_1, \cdots , j_{\fdeg}} \, \beta^{j_1, \cdots , j_{\fdeg}} \, \dd x^1 \wedge \cdots \wedge \dd x^n
\end{align}
where the contra-variant tensor components $\beta^{j_1, \cdots , j_{\fdeg}}$ are obtained by index raising with the metric,
\begin{align}
  \label{eq:forms:L2:2}
  \beta^{j_1, \cdots , j_{\fdeg}} = g^{j_1,i_1} \cdots g^{j_{\fdeg},i_{\fdeg}} \beta_{i_1, \cdots , i_{\fdeg}} .
\end{align}
\end{subequations}
The space $L_2(  \Omega^{\fdeg} , \R^n  )$ is defined in the usual way as all differential $r$-forms with finite $L_2$ norm.
Unless stated otherwise (e.g. for Stokes' theorem), we will assume that all forms we work with are in $L_2( \Omega^{\fdeg} , \R^n )$.
Eq.~\ref{eq:forms:L2} can equivalently be written using the Hodge dual $\star \beta$ as
\begin{align*}
   \langle \! \langle \alpha , \beta \rangle \! \rangle = \int_{\R^n} \alpha \wedge \star \beta
\end{align*}
and the equality with Eq.~\ref{eq:forms:L2} defines $\star \beta$.
Since the integral in the last equation has to be over a volume form, i.e. one of maximum degree $\fdeg = n$, the Hodge dual has to satisfy
\begin{align*}
  \star : \Omega^{\fdeg}(\R^n) \to \Omega^{n-\fdeg}(\R^n)
\end{align*}
and it also holds that $\star \star \alpha = (-1)^{n(n-r)} \alpha$.
By linearity of the Hodge dual, which follows from those of the contraction in Eq.~\ref{eq:forms:L2:2} and the wedge product, for the coordinate expression of $\star \beta$ it suffices to know those for the form basis functions.
For $\R^3$, for example, these are
\begin{subequations}
  \label{eq:hodge_dual:R3}
\begin{align}
  \star 1 &= \dd x^1 \wedge \dd x^2 \wedge \dd x^3
  \\[3pt]
  \star \dd x^1 &= \dd x^2 \wedge \dd x^3
  \\[3pt]
  \star \dd x^2 &= -\dd x^1 \wedge \dd x^3
  \\[3pt]
  \star \dd x^3 &= \dd x^1 \wedge \dd x^2 .
\end{align}
\end{subequations}


\subsection{The Fourier Transform of the Exterior Algebra}
\label{sec:fourier_forms}
\label{sec:exterior_algebra:fourier}

\paragraph{Motivation}
Before introducing the formal definition of the Fourier transform of differential forms, it is instructive to understand the duality that underlies it.

The classical result that integration becomes under the Fourier transform pointwise evaluation, i.e.
\begin{align}
  \label{eq:fouier_transform:vol_integral}
  \int_{\R^n} f(x) \, \d x = \hat{f}(0) ,
\end{align}
suggests that the volume form $f(x) \, \d x = f_{1 \cdots n} \d x^1 \wedge \cdots \wedge \d x^n \in \Omega^n(\R^n)$ should map to a $0$-form in frequency space, since for $0$-forms ``integration'' amounts to pointwise evaluation, cf.~\cite{Frankel2011}.
This is also required by covariance because if the volume form $f(x) \, \d x$ would map to another volume form (or to a form of degree other than zero) under the Fourier transform then it would ``pick up'' a Jacobian term under coordinate transformations.
But $\hat{f}(0)$ is already the integrated value and hence invariant under changes of coordinates (e.g. the mass of an object or the energy of a field).
This observation for volume forms can be consistently extended to differential forms of lower degree.
For example, for a plane in $\R^3$, which from the point of view of the exterior algebra is a $2$-form, the Fourier transform is known to be in the direction of the plane's normal, i.e. a $1$-form in this direction in frequency space.
For the $x_1$-$x_2$ plane we have, for example,
\begin{align*}
 \mathfrak{F} \Big( \delta_{{{1,2}}}(x) \Big) = \delta_{{3}}(\xi)
\end{align*}
where $\delta_{{{1,2}}}(x)$ is the distribution representing the $x_1$-$x_2$ plane in $\R^3$ and $\delta_{{3}}(\xi)$ its Fourier transform in the $\xi_3$ direction.
With the above considerations, the duality between differential forms in space and frequency is expected to be
\begin{center}
  \includegraphics[width=0.52\textwidth]{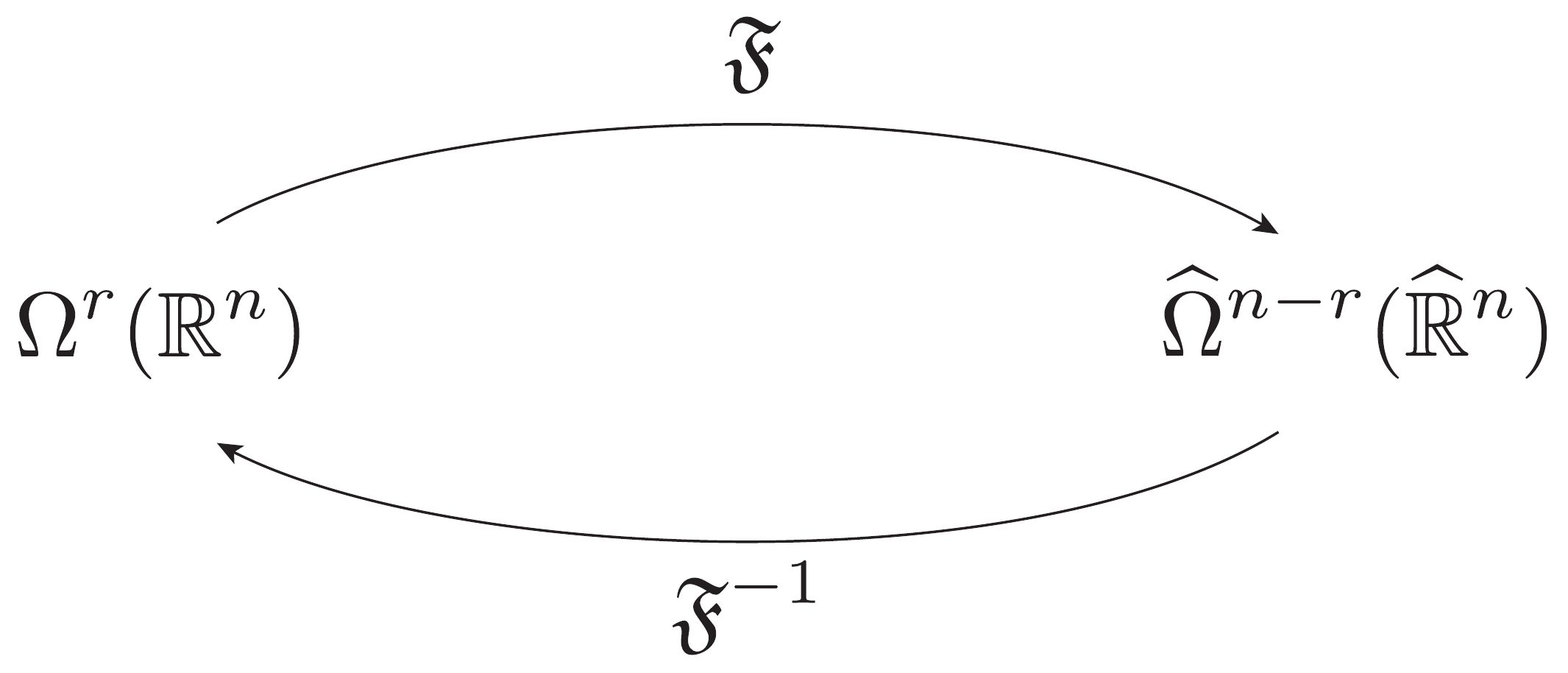}
\end{center}
where we denote by $\widehat{\Omega}^{n-\fdeg}(\widehat{\R}^n)$ the exterior algebra in frequency space $\widehat{R}^n$.

The spatial coordinate $x = x^i \, \partial / \partial x^i \in \R^n$ pairs with the covector $\xi = \xi_i \, \dd x^i \in \widehat{\R}^n$ in the complex exponential $e^{i x^i \xi_i}$ of the Fourier transform.
When we require that the usual correspondence between differential operators $\partial / \partial x^i$ and their Fourier transform $\xi_i \dd x^i$ (no summation) is to hold (in an appropriate sense, see Proposition~\ref{prop:exterior_derivative:fourier} below) we also have to expect that the exterior algebra in frequency space is naturally defined over $\partial / \partial \xi^i \equiv \partial / \partial x^i$ where we use the notation $\partial / \partial \xi^i$ to be explicit if we are considering the primary or dual domain with respect to the Fourier transform.
Differential forms in frequency space are hence defined over form basis functions $\partial / \partial \xi^{j_1} \wedge \cdots \wedge \partial / \partial \xi^{j_r}$ and this provides the second cornerstone for the definition of the Fourier transform of differential forms.

\paragraph{The Fourier Transform of Differential Forms}

With the above motivation, we define the Fourier transform of differential forms, following an approach introduced in mathematical physics~\cite{Kalkman1993,DeWitt1992,Castellani2015a} based on the theory of super vector spaces.
\commentCL{The most relevant work is~\cite{Castellani2015a} where things are spelled out explicitly for differential forms and in an accessible notation.}

\begin{definition}
  \label{def:diff_form:ft}
  Let $\alpha \in \Omega^{\fdeg}(\R^n)$ be a differential form of degree $\fdeg$,
  \begin{align}
    \alpha(x) = \sum_{j_1 < \cdots < j_{\fdeg}} \alpha_{j_1, \cdots , j_{\fdeg}}(x) \, \dd x^{j_1} \wedge \cdots \wedge \dd x^{j_{\fdeg}} . 
    \nonumber
  \end{align}
  Then the \emph{Fourier transform $\widehat{\alpha} = \mathfrak{F}(\alpha) \in \widehat{\Omega}^{n-\fdeg}(\widehat{\R}^n)$ of $\alpha$} is the $(n-\fdeg)$-form on frequency space $\widehat{\R}^n$ given by
  \begin{align}
    \mathfrak{F}(\alpha)(\xi) = \frac{1}{(2\pi)^{n/2}} \sum_{j_1 < \cdots < j_{\fdeg}} \int_{\R^n} \alpha_{j_1, \cdots , j_{\fdeg}}(x) \, \dd x^{j_1} \wedge \cdots \wedge \dd x^{j_{\fdeg}} \wedge e^{-i \, x^{p} \xi_{p}} \wedge e^{i \, \dd x^{q} \dpp{}{\xi^{q}}}
    \nonumber
  \end{align}
  where the complex exponential is defined as
  \begin{align}
    \label{eq:def:diff_form:ft:exp}
       e^{\dd x^{q} \dpp{}{\xi^q}} = \sum_{a=0}^n \frac{(\dd x^q \otimes \dpp{}{\xi^q})^{^{\scriptstyle a}}}{a !} .
  \end{align}
 The \emph{inverse Fourier transform $\alpha = \mathfrak{F}^{-1}(\widehat{\alpha}) \in \Omega^{\fdeg}(\R_{x}^n)$ of $\widehat{\alpha} \in \widehat{\Omega}^{n-\fdeg}(\widehat{\R}^n)$} is the $\fdeg$-form in space given by
  \begin{align}
    \mathfrak{F}^{-1}(\widehat{\alpha})(x) = \frac{-1}{(2\pi)^{n/2}} \sum_{j_1 < \cdots < j_{n-\fdeg}} \int_{\widehat{\R}^n} \widehat{\alpha}^{j_1, \cdots , j_{n-\fdeg}}(\xi) \, \frac{\partial}{\partial \xi^{j_1}} \wedge \cdots \wedge \frac{\partial}{\partial \xi^{j_{n-\fdeg}}} \wedge e^{i \, \xi_p x^p} \wedge e^{\dpp{}{\xi^q} \, \dd x^q}  .
    \nonumber
  \end{align}
\end{definition}

From a functional analytic perspective, the usual requirements for the existence of the Fourier transform apply with respect to the coordinate functions $\alpha_{j_1, \cdots , j_\fdeg}(x)$ and $\widehat{\alpha}^{j_1, \cdots , j_\fdeg}(\xi)$.

\begin{remark}
  In the literature, e.g.~\cite{Kalkman1993,Castellani2015a}, the exponential $e^{\dd x^q \dpp{}{\xi^q}}$ of the form basis functions is typically defined with a complex unit in the exponent, which is considered there as only serving ``aesthetic'' purposes to visually closer match the scalar case.
  For real-valued functions this is, however, not true and the use of $e^{i \dd x^{q} \dpp{}{\xi^q}}$ leads to inconsistencies in the inverse Fourier transform.
  In particular, there are no differential $r$-form wavelets that satisfy the de Rahm complex and that are real-valued in space.
  We hence use the definition above without the complex unit.
\end{remark}

To evaluate the exponential in Eq.~\ref{eq:def:diff_form:ft:exp} one uses that the exterior algebra has a $\Z^2$-graded structure, with the even grade given by the differential forms of even degree and the odd one by differential forms of odd degree.\footnote{With the $\Z^2$-grading the exterior algebra is a super vector space. We will not pursue this viewpoint in our work.}
The $\Z^2$-grading is respected in the tensor product by using the following multiplication rule (for homogenous elements):
\begin{subequations}
    \label{eq:fourierforms:tensor_product:parity}
\begin{align}
  \label{eq:fourierforms:tensor_product:parity:1}
  \big( \alpha \otimes \widehat{\beta} \big) \wedge_{x,\xi} \big( \gamma \otimes \widehat{\delta} \big) = (-1)^{\mathrm{par}(\gamma) \mathrm{par}(\widehat{\beta})} \big( \alpha \wedge_x \gamma \otimes \widehat{\beta} \wedge \widehat{\delta} \big) ,
\end{align}
e.g.~\cite[Ch. III.4]{Bourbaki1989} or~\cite{Castellani2015a}.
Here $\mathrm{par}(\cdot)$ is the parity, which is $1$ when the degree of a differential form is odd and $0$ otherwise.
Since the order of the spaces matters for the multiplication rule, one has for the inverse Fourier transform
\begin{align}
  \label{eq:fourierforms:tensor_product:parity:2}
  \big( \widehat{\alpha} \otimes \beta \big) \wedge_{\xi,x} \big( \widehat{\gamma} \otimes {\delta} \big) = (-1)^{\mathrm{par}(\widehat{\gamma}) \mathrm{par}({\beta})} \big( \widehat{\alpha} \wedge \widehat{\gamma} \otimes {\beta} \wedge_{x} {\delta} \big) .
\end{align}
\end{subequations}

\begin{example}
In $\R^2$ the highest order term in the exponential of the form basis functions in Eq.~\ref{eq:def:diff_form:ft:exp} yields
\begin{align*}
  \left( \! \dd x^q \otimes \dpp{}{\xi^q} \! \right)^2
  &= \left( \dd x^1 \otimes \dpp{}{\xi^1} + \dd x^2 \otimes \dpp{}{\xi^2} \right) \wedge_{x,\xi} \left( \dd x^1 \otimes \dpp{}{\xi^1} + \dd x^2 \otimes \dpp{}{\xi^2}  \right) .
  \intertext{Since the ``square'' terms vanish by the anti-symmetry of the wedge product this equals}
  \left( \! \dd x^q \otimes \dpp{}{\xi^q} \! \right)^2
  &= \dd x^1 \otimes \dpp{}{\xi^1} \wedge_{x,\xi} \dd x^2 \otimes  \dpp{}{\xi^2} + \dd x^2 \otimes  \dpp{}{\xi^2} \wedge_{x,\xi} \dd x^1 \otimes \dpp{}{\xi^1}
  \\[7pt]
  &= (-1)^{1 \cdot 1} \, \dd x^1 \wedge_{x} \dd x^2 \otimes \dpp{}{\xi^1} \wedge \dpp{}{\xi^2} \\
  & \quad \quad \quad \quad \quad \quad \quad + (-1)^{1 \cdot 1} \, \dd x^2 \wedge_{x} \dd x^1 \otimes \dpp{}{\xi^2} \wedge \dpp{}{\xi^1}
  \nonumber
   \\[7pt]
  &= -2 \, \big( \dd x^1 \wedge_{x} \dd x^2 \otimes \dpp{}{\xi^1} \wedge \dpp{}{\xi^2} \big) .
\end{align*}
\commentCL{
\begin{itemize}
  \item The factor of 2 cancels with the $a!$ factor in the power series!
  \item The above calculation carries above to the volume form term in three dimensions.
\end{itemize}
}
\end{example}

To simplify the notation, we will typically not explicitly write the tensor product as above.
When the exponential of the form basis functions in Eq.~\ref{eq:def:diff_form:ft:exp} has been expanded, the usual rules for integration of differential forms apply for evaluating the Fourier transform.
In particular, the integral vanishes unless Eq.~\ref{eq:def:diff_form:ft:exp} together with the the basis functions $\dd x^{j_1} \wedge \cdots \wedge \dd x^{j_{\fdeg}}$ of $\alpha$ yield a volume form.
This is also the reason why it suffices to consider the power series in Eq.~\ref{eq:def:diff_form:ft:exp} up to $n$.

\begin{example}
  \label{ex:ft:1_form:r3}
  For $\alpha = \alpha_1(x) \, \dd x^1 \in \Omega^1(\R^3)$ we want to compute the Fourier transform
  \begin{align*}
    \mathfrak{F}(\alpha)(\xi)
    &= \frac{1}{(2\pi)^{3/2}} \int_{\R^n} \alpha_{1}(x) \, \dd x^1 \wedge e^{-i x_p \xi^p} \wedge e^{\dd x^q \dpp{}{\xi^q}} .
  \end{align*}
  The exponential $e^{\dd x^q \dpp{}{\xi^q}}$ of the form basis functions expands in $\R^3$ as
  \begin{align*}
    & e^{\dd x^q \dpp{}{\xi^q}}
   = 1 + \left(\dd x^1 \dpp{}{\xi^1} + \dd x^2 \dpp{}{\xi^2} + \dd x^3 \dpp{}{\xi^3} \right)
    \\[3pt]
    & \quad \ \ \ - \left(\dd x^2 \wedge \dd x^3 \, \dpp{}{\xi^2} \wedge \dpp{}{\xi^3} + \dd x^1 \wedge \dd x^3 \, \dpp{}{\xi^1} \wedge \dpp{}{\xi^3} +  \dd x^1 \wedge \dd x^2 \, \dpp{}{\xi^1} \wedge \dpp{}{\xi^2} \right)
    \nonumber \\[3pt]
    & \quad \ \ \ - \left( \dd x^1 \dpp{}{\xi^1} \wedge \dd x^2 \dpp{}{\xi^2} \wedge \dd x^3 \dpp{}{\xi^3} \right) .
    \nonumber
    \end{align*}
  For $\alpha = \alpha_1(x) \, \dd x^1$ the integral of the Fourier transform becomes a volume form hence only with the second line and only the first term yields a nonzero expression by the anti-symmetry of the wedge product.
  Thus,
  \begin{align*}
    \mathfrak{F}(\alpha)(\xi)
    &= \frac{1}{(2\pi)^{3/2}} \int_{\R^n} \alpha_{1}(x) \, e^{-i \, x_i \, \xi^i } \, \dd x^1 \wedge \left( -\dd x^2 \wedge \dd x^3 \, \dpp{}{\xi^2} \wedge \dpp{}{\xi^3} \right)
    \\[7pt]
    &= \frac{-1}{(2\pi)^{3/2}} \, \widehat{\alpha}_1(\xi) \, \dpp{}{\xi^2} \wedge \dpp{}{\xi^3}
  \end{align*}
  where $\widehat{\alpha}_1(\xi)$ is the scalar Fourier transform of the coordinate function $\alpha(x)$.
\end{example}

By linearity, for the differential form part of the Fourier transform it suffices to know the  transform of the form basis functions.
In $\R^2$ these are
\begin{subequations}
\label{eq:diff_form:ft:r2}
\begin{align}
  \mathfrak{F} \big( 1_x \big) &= -\dpp{}{\xi^{1}} \wedge \dpp{}{\xi^{2}}
  \\[4pt]
  \mathfrak{F} \big( \dd x^{j_1} \big) &= \sign{(\sigma)} \, \dpp{}{\xi^{j_2}}
  \\[4pt]
  \mathfrak{F} \big( \dd x^{1} \wedge \dd x^{2} \big) &= 1_{\xi}
\end{align}
\end{subequations}
with the permuation $\sigma$ being $\sigma = \big(\substack{1 \, , \, 2 \\ j_1, j_2 } \big)$.
In $\R^3$ one has
\begin{subequations}
\label{eq:diff_form:ft:r3}
\begin{align}
  \mathfrak{F} \big( 1_x \big) &= - \dpp{}{\xi^{1}} \wedge \dpp{}{\xi^{2}} \wedge \dpp{}{\xi^{3}}
  \\[4pt]
  \mathfrak{F} \big( \dd x^{j_1} \big) &= -\sign{(\sigma)} \, \dpp{}{\xi^{j_2}} \wedge \dpp{}{\xi^{j_3}}
  \\[4pt]
  \mathfrak{F} \big( \dd x^{j_1} \wedge \dd x^{j_2} \big) &= \sign(\sigma) \, \dpp{}{\xi^{j_3}}
  \\[4pt]
  \mathfrak{F} \big( \dd x^{1} \wedge \dd x^{2} \wedge \dd x^{3} \big) &= 1_{\xi}
\end{align}
\end{subequations}
with the permutation being $\sigma = \big(\substack{1 \, , \, 2, \, 3 \\ j_1, j_2,j_3} \big)$.

\begin{remark}
  \label{remark:diff_form:ft:index_mapping}
  Comparing Eq.~\ref{eq:diff_form:ft:r3} to Eq.~\ref{eq:hodge_dual:R3} we see that the Fourier transform of the differential form basis functions has a structure analogous to those of the Hodge dual.
  A general description of the Fourier transform of the $\dd x^{j_1} \wedge \cdots \wedge \dd x^{j_\fdeg}$ can hence be obtained when we borrow some notation from the general description of the Hodge star~\cite[Ch. 7.2]{Marsden2004}.
  Consider a permutation
  \begin{align*}
  \sigma(j) =
  \begin{pmatrix}
    j_1 & \cdots & j_{\fdeg} & \cdots
    \\
    \sigma_1 & \cdots & \sigma_{\fdeg} & \sigma_{n-\fdeg+1} & \cdots & \sigma_n
  \end{pmatrix} \in S_n
  \end{align*}
  whose first $\fdeg$ indices match the multi-index $j=(j_1, \cdots , j_r)$ of the given differential form basis function. Here $S_{n}$ is the group of permutations of $\{ 1, \cdots , n \}$.
  One then has
  \begin{align*}
    \mathfrak{F}\left( \dd x^{j_1} \wedge \cdots \wedge \dd x^{j_{\fdeg}} \right)
    =  -(-1)^{\lfloor r/2 \rfloor} \, \mathrm{sgn}(\sigma) \ \dpp{}{\xi^{\sigma_{n-\fdeg+1}}} \wedge \cdots \wedge \dpp{}{\xi^{\sigma_{n}}} .
  \end{align*}
\end{remark}

A direct consequence of our definition of the Fourier transform of differential forms is the following.

\begin{proposition}
  \label{prop:fourier_form:inverse}
  $\mathfrak{F}^{-1} \circ \mathfrak{F}$ is the identity.
\end{proposition}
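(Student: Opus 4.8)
The plan is to observe that $\mathfrak{F}$, applied to a basis $\fdeg$-form, factors as the ordinary scalar Fourier transform acting on the coordinate function tensored with a fixed, invertible linear map on the finitely many constant form basis functions, and likewise for $\mathfrak{F}^{-1}$; then $\mathfrak{F}^{-1}\circ\mathfrak{F}$ is the identity once one checks that the coordinate part is classical Fourier inversion on $\R^n$ and that the two form-parts are mutually inverse. By linearity of $\mathfrak{F}$, of $\mathfrak{F}^{-1}$, and of the scalar Fourier transform, it suffices to treat a single $\alpha = f(x)\,\dd x^{j_1}\wedge\cdots\wedge\dd x^{j_\fdeg}$ with $j_1<\cdots<j_\fdeg$ and $f$ a coordinate function for which scalar Fourier inversion holds, say Schwartz.

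By Definition~\ref{def:diff_form:ft} and Remark~\ref{remark:diff_form:ft:index_mapping}, only the degree-$(n-\fdeg)$ term of $e^{\dd x^q\,\dpp{}{\xi^q}}$, and within it only the summand carrying the complementary index set, contributes a $\dd x$-volume form, so that
\begin{align*}
  \mathfrak{F}(\alpha)(\xi) = -(-1)^{\lfloor \fdeg/2\rfloor}\,\sign(\sigma)\;\widehat{f}(\xi)\;\dpp{}{\xi^{\sigma_{n-\fdeg+1}}}\wedge\cdots\wedge\dpp{}{\xi^{\sigma_n}},
\end{align*}
with $\widehat{f}$ the scalar Fourier transform of $f$ and $\sigma\in S_n$ any permutation whose first $\fdeg$ entries are $(j_1,\dots,j_\fdeg)$. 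I would then prove the companion of Remark~\ref{remark:diff_form:ft:index_mapping} for $\mathfrak{F}^{-1}$: expanding $e^{\dpp{}{\xi^q}\,\dd x^q}$ against a basis $(n-\fdeg)$-form and keeping the unique surviving (degree-$\fdeg$) summand yields
\begin{align*}
  \mathfrak{F}^{-1}\!\Big(\dpp{}{\xi^{k_1}}\wedge\cdots\wedge\dpp{}{\xi^{k_{n-\fdeg}}}\Big) = \pm(-1)^{\lfloor(n-\fdeg)/2\rfloor}\,\sign(\tau)\;\dd x^{\tau_{n-\fdeg+1}}\wedge\cdots\wedge\dd x^{\tau_n},
\end{align*}
for $k_1<\cdots<k_{n-\fdeg}$ and $\tau\in S_n$ any permutation whose first $n-\fdeg$ entries are $(k_1,\dots,k_{n-\fdeg})$; the overall sign is pinned down by the $-1/(2\pi)^{n/2}$ prefactor and the $\xi\otimes x$ ordering of the graded tensor product in Eq.~\ref{eq:fourierforms:tensor_product:parity:2}.

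Composing the two, and using that on Cartesian $\R^n$ the metric is the identity so raising indices is trivial and $\widehat{\alpha}^{k_1\cdots k_{n-\fdeg}}=\widehat{\alpha}_{k_1\cdots k_{n-\fdeg}}$, the coordinate part of $\mathfrak{F}^{-1}(\mathfrak{F}(\alpha))$ is $\mathfrak{F}^{-1}(\widehat{f})(x)=f(x)$ by classical Fourier inversion on $\R^n$, while the form part returns $\dd x^{j_1}\wedge\cdots\wedge\dd x^{j_\fdeg}$ times a product of four signs: the forward sign $-(-1)^{\lfloor \fdeg/2\rfloor}\sign(\sigma)$, the sign of sorting $(\sigma_{n-\fdeg+1},\dots,\sigma_n)$ into increasing order, the backward sign $\pm(-1)^{\lfloor(n-\fdeg)/2\rfloor}\sign(\tau)$, and the sign of sorting the resulting $\dd x$-indices back into $(j_1,\dots,j_\fdeg)$. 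The remaining task is to show this product is $+1$; this is a statement about the signs of a permutation, its complement, and their sorting permutations, in the spirit of the double-Hodge-star identity but with the explicit minus signs of Definition~\ref{def:diff_form:ft} and the floor factors arranged so as to cancel the $(-1)^{\fdeg(n-\fdeg)}$ that such computations usually leave over.

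The main obstacle is precisely this sign bookkeeping: one has to track the $\Z^2$-graded tensor-product parities of Eq.~\ref{eq:fourierforms:tensor_product:parity}, the two distinct factor orderings ($x\otimes\xi$ for $\mathfrak{F}$ versus $\xi\otimes x$ for $\mathfrak{F}^{-1}$), the explicit minus signs in the definitions, and the permutation signs, and confirm that they all cancel. Since the results of the paper require only $n\in\{2,3\}$, the most economical way to finish is to verify the claim directly on the finitely many basis forms using the explicit transforms in Eq.~\ref{eq:diff_form:ft:r2} and Eq.~\ref{eq:diff_form:ft:r3} together with their inverses — four basis forms for $n=2$ and eight for $n=3$; for general $n$ one proves the permutation-sign identity once and for all.
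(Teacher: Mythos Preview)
Your proposal is correct and is precisely the ``coordinate calculation using the above rules'' that the paper's own proof invokes (with the alternative of citing \cite[Proposition~5.2]{Kalkman1993}); you have simply spelled out the details the paper leaves implicit. In particular, your fallback of verifying the sign cancellation directly on the finitely many basis forms for $n\in\{2,3\}$ via Eq.~\ref{eq:diff_form:ft:r2} and Eq.~\ref{eq:diff_form:ft:r3} is exactly in the spirit of the paper's terse argument.
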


\begin{proof}
  This can be checked with a coordinate calculation using the above rules (or~\cite[Proposition 5.2]{Kalkman1993}).
\end{proof}

\paragraph{Exterior derivative}
With the Fourier transform of differential forms being defined, we can consider those of the exterior derivative.
It is given in the next proposition.

\begin{proposition}
  \label{prop:exterior_derivative:fourier}
  The Fourier transform (or principal symbol) of the exterior derivative $\dd : \Omega^{\fdeg}(\R^n) \to \Omega^{\fdeg+1}(\R^n)$ is the anti-derivation $\widehat{\dd} : \widehat{\Omega}^{n-\fdeg}(\widehat{\R}^n) \to \widehat{\Omega}^{n-\fdeg-1}(\widehat{\R}^n)$ given by the interior product $\ip_{i \xi}\widehat{\alpha}$, i.e.
  \begin{align*}
    \mathfrak{F}\big( \dd \alpha )(\xi)
    = \big(\widehat{\dd} \, \widehat{\alpha}\big)(\xi)
    = \ip_{i \xi}\widehat{\alpha}(\xi)
  \end{align*}
  where $\xi$ is the coordinate vector $\xi = \xi_1 \, \dd \xi^1 + \cdots + \xi_n \, \dd \xi^n$.
  The following diagram thus commutes
  \commentCL{replace with diagram from diagram's package (http://www.paultaylor.eu/diagrams/manual.pdf)}
  \begin{align*}
    \begin{CD}
    \Omega^{\fdeg}(\R^n) @>  {\rlap{$\ \ \ \ \ \ \ \dd$}\phantom{\text{very long label}}}  >> \Omega^{\fdeg+1}(\R^n)\\[7pt]
    @V {\displaystyle \mathfrak{F}} VV @VV {\displaystyle \mathfrak{F}} V
    \\
    \widehat{\Omega}^{n-\fdeg}(\widehat{\R}^n) @> {\displaystyle \quad \hat{\dd} = \ip_{i \xi} \quad} >> \widehat{\Omega}^{n-\fdeg-1}(\widehat{\R}^n)
    \end{CD}
  \end{align*}
  and the Fourier transform of the de Rahm co-chain complex is a chain complex.
\end{proposition}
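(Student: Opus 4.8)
The plan is to reduce to a coordinate computation on monomials and then to match signs. By $\R$-linearity of $\mathfrak{F}$, of $\dd$, and of the interior product it suffices to verify the identity for a single basis monomial $\alpha = f(x)\,\dd x^{j_1}\wedge\cdots\wedge\dd x^{j_\fdeg}$ with $j_1<\cdots<j_\fdeg$ and $f=\alpha_{j_1,\cdots,j_\fdeg}$ a coordinate function for which the integration by parts below is justified; the general smooth case then follows by the usual density argument. Write $J=(j_1,\dots,j_\fdeg)$, let $K=(k_1,\dots,k_{n-\fdeg})$ be the complementary increasing multi-index, and for $p\in K$ let $\eta_p$ denote the wedge of the $n-\fdeg-1$ partials $\dpp{}{\xi^{k}}$ with $k\in K\setminus\{p\}$, taken in increasing order. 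By Definition~\ref{def:diff_form:ft} and Remark~\ref{remark:diff_form:ft:index_mapping} one has $\widehat{\alpha}(\xi)=c_J\,\widehat{f}(\xi)\,\dpp{}{\xi^{k_1}}\wedge\cdots\wedge\dpp{}{\xi^{k_{n-\fdeg}}}$ with $c_J=-(-1)^{\lfloor \fdeg/2\rfloor}\,\mathrm{sgn}(\sigma_J)$, $\sigma_J$ a permutation beginning with $(j_1,\dots,j_\fdeg)$ and continuing with $(k_1,\dots,k_{n-\fdeg})$, and $\widehat{f}$ the scalar Fourier transform of $f$.

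First I would evaluate the left-hand side. Since $\dd\alpha=\sum_{p\notin J}(\partial_p f)(x)\,\dd x^p\wedge\dd x^J$, the terms with $p\in J$ dropping out by anti-symmetry, I apply $\mathfrak{F}$ term by term, using inside Definition~\ref{def:diff_form:ft} the classical correspondence $\mathfrak{F}(\partial_p f)=i\xi_p\,\widehat{f}$ (itself the integration by parts) together with the Fourier transform of the $(\fdeg+1)$-monomial $\dd x^p\wedge\dd x^J$ supplied by Remark~\ref{remark:diff_form:ft:index_mapping}; this produces $\mathfrak{F}(\dd\alpha)(\xi)=\sum_{p\in K}\varepsilon_p\,i\xi_p\,\widehat{f}(\xi)\,\eta_p$ with explicitly determined signs $\varepsilon_p$ assembled from $\mathrm{sgn}(\sigma_{p,J})$ and $(-1)^{\lfloor(\fdeg+1)/2\rfloor}$, where $\sigma_{p,J}$ is a permutation beginning with $(p,j_1,\dots,j_\fdeg)$. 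On the other side, the interior product $\ip_{i\xi}$ --- the degree $-1$ anti-derivation of $\widehat{\Omega}^{\bullet}(\widehat{\R}^n)$ fixed by $\ip_{i\xi}\bigl(\dpp{}{\xi^{m}}\bigr)=i\xi_m$ --- sends $\dpp{}{\xi^{k_1}}\wedge\cdots\wedge\dpp{}{\xi^{k_{n-\fdeg}}}$ to $\sum_{l=1}^{n-\fdeg}(-1)^{l-1}\,i\xi_{k_l}\,\eta_{k_l}$, so that $\ip_{i\xi}\widehat{\alpha}(\xi)=c_J\,\widehat{f}(\xi)\sum_{l=1}^{n-\fdeg}(-1)^{l-1}\,i\xi_{k_l}\,\eta_{k_l}$, again a sum over $p=k_l\in K$.

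Comparing these two expressions $\eta_p$-component by $\eta_p$-component reduces the proposition to a purely combinatorial identity: that $\varepsilon_{k_l}=c_J\,(-1)^{l-1}$ for every $l$, i.e.\ a relation between $\mathrm{sgn}(\sigma_{k_l,J})$, $\mathrm{sgn}(\sigma_J)$, the half-integer jump $\lfloor(\fdeg+1)/2\rfloor-\lfloor \fdeg/2\rfloor$, and the position sign $(-1)^{l-1}$ of the interior product. This bookkeeping --- essentially the same one underlying the parallel between $\mathfrak{F}$ on form monomials and the Hodge star noted in Remark~\ref{remark:diff_form:ft:index_mapping} --- is the only real content of the argument, and it is the step I expect to be the main obstacle; it can be carried out directly, or, more slickly, one may use Proposition~\ref{prop:fourier_form:inverse} to recast the claim as the generating-function identity $\dd=\mathfrak{F}^{-1}\circ\ip_{i\xi}\circ\mathfrak{F}$ and check it at the level of the exponentials $e^{\dd x^q\dpp{}{\xi^q}}$, where left-wedging by $\sum_p i\xi_p\,\dd x^p$ in the $x$-factor is matched term by term against contraction by $i\xi$ in the $\xi$-factor (cf.\ also~\cite[Proposition~5.2]{Kalkman1993}). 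Finally, commutativity of the displayed square is a restatement of the identity just proved, and since $\ip_{i\xi}\circ\ip_{i\xi}=0$, being contraction with the same covector twice, the bottom row $\cdots\to\widehat{\Omega}^{n}(\widehat{\R}^n)\to\cdots\to\widehat{\Omega}^{0}(\widehat{\R}^n)\to 0$ with all maps $\ip_{i\xi}$ is a chain complex.
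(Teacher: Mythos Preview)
Your proposal is correct and follows essentially the same route as the paper: reduce by linearity to a single monomial, compute $\mathfrak{F}(\dd\alpha)$ and $\ip_{i\xi}\widehat{\alpha}$ separately using Remark~\ref{remark:diff_form:ft:index_mapping} and the standard interior-product expansion, and then match the permutation signs term by term, with the chain-complex claim following from $\ip_{i\xi}\circ\ip_{i\xi}=0$. The paper's proof is organized identically and, like yours, leaves the final sign bookkeeping as a verification (``It can be checked that $\tilde{\sigma}$ equals up to a sign the permutation $\sigma$ \ldots\ and the sign is compensated by $(-1)^{\lfloor (r+1)/2\rfloor}$''); your alternative suggestion to verify the identity at the level of the exponential $e^{\dd x^q\partial/\partial\xi^q}$ is a pleasant variant not pursued in the paper.
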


\begin{proof}
  The coordinate expression of the exterior derivative is
  \begin{subequations}
  \label{proof:exterior_derivative:fourier}
  \begin{align*}
   \dd \alpha(x) = \sum_{p=1}^n \sum_{j_1 < \cdots < j_{\fdeg}} \dpp{\alpha_{j_1, \cdots , j_{\fdeg}}}{x^p} \, \dd x^p \wedge \dd x^{j_1} \wedge \cdots \wedge \dd x^{j_{\fdeg}} .
  \end{align*}
  For each nonzero term there is a permutation $\sigma(j,p) = (j_1, \cdots , p, \cdots , j_r) \in S_n$, analogous to those in Remark~\ref{remark:diff_form:ft:index_mapping}, that brings $\dd x^{p}$ in canonical, increasing order, introducing a sign factor $\mathrm{sgn}(\sigma(j_p))$ through the anti-symmetry of the wedge product.
  By linearity it suffices to consider the $p^{\textrm{th}}$ term in the following.
  Its Fourier transform is
  \begin{align}
   \mathfrak{F} \big( \dd \alpha_p \big)
   = s(r+1) \! \sum_{j_1 < \cdots < j_{\fdeg}} i \xi^p \,  \widehat{\alpha}_{j_1, \cdots , j_{\fdeg}} \, \mathrm{sgn}(\sigma(j_p)) \, \dpp{}{\xi^{\sigma(j_p)_{n-\fdeg+2}}} \wedge \cdots \wedge \dpp{}{\xi^{\sigma(j_p)_n}}
   \nonumber
  \end{align}
  where we use $s(r+1) = - (-1)^{\lfloor (r+1)/2 \rfloor}$.
  On the other hand, the interior product $\ip_{i\xi} \widehat{\alpha}$ on the the right hand side of the proposition is for the $p^{\textrm{th}}$ term in coordinates given by
  \begin{align}
    \ip_{i \xi_p} \widehat{\alpha} = s(r) \! \! \sum_{j_1 < \cdots < j_{\fdeg}} i \xi^p \, \widehat{\alpha}_{j_1, \cdots , j_{\fdeg}} \, \mathrm{sgn}(\bar{\sigma}(j)) \, \dd \xi_p \! \left( \dpp{}{\xi^{\bar{\sigma}(j)_{n-\fdeg+1}}} \wedge \cdots \wedge \dpp{}{\xi^{\bar{\sigma}(j)_{n}}} \right).
    \nonumber
  \end{align}
  where the $\bar{\sigma}(j)$ are the permutations that arises in the Fourier transform of $\alpha$.
  The expression on the right hand side will be nonzero only when $\partial / \partial \xi^{p}$ is present in the wedge product of the basis functions.
  In this case it pairs with $\dd \xi_p$ after a permutation $\bar{\sigma}(p,j)'$ has been applied.
  Denoting the combined permutation as $\tilde{\sigma}(j_p)$, i.e. $\tilde{\sigma}(j_p) = \bar{\sigma}(p,j)' \circ \bar{\sigma}(j)$ we have
  \begin{align}
    \ip_{i \xi_p} \widehat{\alpha} = s(r) \! \! \sum_{j_1 < \cdots < j_{\fdeg}} \! \! i \xi^p \, \widehat{\alpha}_{j_1, \cdots , j_{\fdeg}} \, \textrm{sgn}(\tilde{\sigma}) \, \dd \xi_p \! \left( \dpp{}{\xi^{p}} \right) \! \wedge \dpp{}{\xi^{\tilde{\sigma}_{n-\fdeg+2}}} \wedge \cdots \wedge \dpp{}{\xi^{\tilde{\sigma}_n}}  .
    \nonumber
  \end{align}
  \end{subequations}
  It can be checked that $\tilde{\sigma}$ equals up to a sign the permutation $\sigma$ in $\mathfrak{F} \big( \dd \alpha_p \big)$ and the sign is compensated by $(-1)^{\lfloor (r+1)/2 \rfloor}$.

  That $\hat{\dd}$ is an anti-derivation follows from the fact that the interior product is one~\cite[Ch. 4.6]{Nguyen2014}.
  This together with the properties of the differential forms over $\partial / \partial \xi^i$ and that $\ip_{i \xi} \, \ip_{i \xi} = 0$~\cite[p. 428]{Marsden2004} implies that the $\widehat{\Omega}^{\fdeg}(\widehat{\R}^n)$ form a chain complex under $\hat{\dd} = \ip_{i \xi}$.
\end{proof}

\begin{remark}[Remarks on Proposition~\ref{prop:exterior_derivative:fourier}]\leavevmode
  \begin{enumerate}

    \item In the literature the interior product $\ip_{i \xi}$ with the coordinate vector $i \xi$ is also known as the Koszul differential, see e.g.~\cite[Ch. 7.2]{Arnold2018}.

   \item The position vector $\xi$ in the symbol $\widehat{\dd} = \ip_{i \xi}$ of the exterior derivative is naturally expressed in spherical coordinates, where $\xi = \vert \xi \vert \, \dd \hat{r}$ and $\dd \hat{r}$ is the radial basis vector. Proposition~\ref{prop:exterior_derivative:fourier} hence suggests that spherical coordinates are natural for working with the de Rahm complex in frequency space.
This is a key observation that underlies the construction of our local spectral exterior calculus in the next section.
It will also play an important role for the description of the Hodge-Helmholtz decomposition in frequency space in the following.

  \end{enumerate}
\end{remark}

\begin{example}
  Let $\alpha(x) = \alpha_1 \dd x^1 + \alpha_2 \dd x^2 + \alpha_3 \dd x^3 \in \Omega^1(R^3)$. Then $\dd \alpha(x) \in \Omega^2(\R^3)$ and it corresponds to the curl of $\alpha^{\sharp}$ in classical vector calculus, cf. Eq.~\ref{eq:d:vector_calculus}.
  In frequency space, $\alpha(x)$ is the $2$-form
  \begin{align*}
    \widehat{\alpha}(\xi) = -\widehat{\alpha}_1(\xi) \, \dpp{}{\xi^2} \wedge \dpp{}{\xi^3} + \widehat{\alpha}_2(\xi) \, \dpp{}{\xi^1} \wedge \dpp{}{\xi^3} - \widehat{\alpha}_3(\xi) \, \dpp{}{\xi^1} \wedge \dpp{}{\xi^2} ,
  \end{align*}
  cf. Example~\ref{ex:ft:1_form:r3}.
  The interior product that defines the exterior derivative in frequency space is evaluated using the Leibniz rule for it and linearity.
  For example, for the first term of the last equation this yields
  \begin{align*}
    \ip_{i \xi} \Big( -\widehat{\alpha}_1(\xi) \, \dpp{}{\xi^2} \wedge \dpp{}{\xi^3} \Big)
    &= -\widehat{\alpha}_1(\xi) \, \ip_{i \xi} \Big( \dpp{}{\xi^2} \wedge \dpp{}{\xi^3} \Big)
    \\[4pt]
    &= -\widehat{\alpha}_1(\xi) \, \Big( \ip_{i \xi} \dpp{}{\xi^2} \wedge \dpp{}{\xi^3} - \dpp{}{\xi^2} \wedge \ip_{i \xi} \dpp{}{\xi^3} \Big)
    \\[4pt]
    &= -\widehat{\alpha}_1(\xi) \, \Big( i \xi_2 \, \dpp{}{\xi^3} - i \xi_3 \, \dpp{}{\xi^2} \Big)
  \end{align*}
  With the analogous calculations also for the other terms we obtain
  \begin{align*}
    -\big(\ip_{i\xi} \widehat{\alpha}\big)(\xi)
    &=
    \ - i \xi_1 \, \widehat{\alpha}_2(\xi) \dpp{}{\xi^3} + i \xi_1 \,\widehat{\alpha}_3(\xi) \dpp{}{\xi^2}
    \nonumber
    \\[3pt]
    & \ \ \ \ + i \xi_2 \,  \widehat{\alpha}_1(\xi) \dpp{}{\xi^3} - i \xi_2 \,\widehat{\alpha}_3(\xi) \dpp{}{\xi^1}
    \\[3pt]
    & \ \ \ \ - i \xi_3 \,  \widehat{\alpha}_1(\xi) \dpp{}{\xi^2} + i \xi_3 \,\widehat{\alpha}_2(\xi) \dpp{}{\xi^1} .
    \nonumber
  \end{align*}
  Using the musical isomorphism, the last equation can also be written as
  \begin{align*}
    \big(\ip_{\xi} \widehat{\alpha}\big)^{\flat}(\xi) = i \vec{\xi} \times \widehat{\alpha}^{\flat}(\xi) .
  \end{align*}
  This is the classical expression for the curl in the Fourier domain.
\end{example}

\begin{remark}
  In the literature, the principal symbol of the exterior derivative is also given as the exterior product
  \begin{align}
    \label{eq:def:dhat:classical}
    \widehat{\dd \alpha}'(\xi) = i \xi \wedge \widehat{\alpha} ,
  \end{align}
  see   e.g.~\cite[Vol. 1, I.7]{Treves1980},~\cite[Sec. 3.2]{Duistermaat1996}, or~\cite[Ch. 8.5]{Marsden2004} (the prime denoting the alternative definition).
  The expression is obtained when one uses the following definition of the Fourier transform of differential forms, e.g.~\cite[Sec. 3]{Hiptmair2012b},
  \begin{align}
    \label{eq:def:fourier:classical}
    \widehat{\alpha}'(\xi)
    = \mathfrak{F}'\Bigg( \! \sum_{j_1 \cdots j_{\fdeg}} \alpha_{j_1 \cdots j_{\fdeg}} \, \dd x^{j_1} \cdots \dd x^{j_{\fdeg}} \! \Bigg)
    = \sum_{j_1 \cdots j_{\fdeg}} \widehat{\alpha}_{j_1 \cdots j_{\fdeg}}(\xi) \, \dd' \xi^{j_1} \cdots \dd' \xi^{j_{\fdeg}}
  \end{align}
  where $\widehat{\alpha}_{j_1 \cdots j_{\fdeg}}(\xi)$ is the usual scalar Fourier transform of the coordinate functions.
  In contrast to Def.~\ref{def:diff_form:ft}, with Eq.~\ref{eq:def:fourier:classical} the Fourier transform of an $\fdeg$-form is hence again an $\fdeg$-form.
  A straight forward coordinate calculation shows that Eq.~\ref{eq:def:dhat:classical} holds with this definition.
  The expression can be related to ours in Proposition~\ref{prop:exterior_derivative:fourier} using what in the literature is sometimes referred to as Hirani's formula~\cite{Hirani2003,Eldred2018}, $\ip_{X} \alpha = (-1)^{r(n-r)} \star (\star \alpha \wedge X^{\flat} )$, where $\alpha$ is an $r$-form and $X$ a vector field.
  Hence, the interior product $\ip_{i \xi} \widehat{\alpha}$ is mapped to the wedge product $i \xi \wedge \widehat{\alpha}$ and by interpreting the $\dd' \xi^{i}$ in Eq.~\ref{eq:def:fourier:classical} as contra-variant vectors both formulations are equivalent.
  In light of Eq.~\ref{eq:fouier_transform:vol_integral} and the discussion at the beginning of the section, it appears to us more natural that the Fourier transform of an $r$-form is an $(n-r)$-form and this is also what follows naturally from the definition of the exponential of the form basis functions in Def.~\ref{def:diff_form:ft} based on the existing work in mathematical physics~\cite{DeWitt1992,Kalkman1993}.
 \end{remark}
%

\paragraph{Wedge Product}
In the scalar case, multiplication in space becomes convolution in the frequency domain.
The next proposition, which also follows~\cite{Kalkman1993}, shows that this carries over to the wedge product.

\begin{proposition}
  \label{prop:exterior:convolution}
  Let $\alpha \in \Omega^{\fdeg}(\R^n)$ and $\beta \in \Omega^l(\R^n)$ and let the convolution $\ast : \Omega^{\fdeg} \times \Omega^l \to \Omega^{\fdeg+l-n}$ of differential forms be
  \begin{align*}
    (\alpha \ast \beta)(\dd y) = \int_{\R^n} \alpha(\dd x) \wedge \beta( \dd y - \dd x )
  \end{align*}
  where $\alpha(\dd x)$ denotes the basis representation of $\alpha$ with respect to the differentials $\dd x^i$.
  Then the Fourier transform $\hat{\wedge}$ of the wedge product $\wedge$ is given by
  \begin{align*}
    \mathfrak{F}( \alpha \wedge \beta ) = \widehat{\alpha} \ \hat{\wedge} \ \widehat{\beta} = \widehat{\alpha} \ast \widehat{\beta} \ \in \widehat{\Omega}^{n-\fdeg-l}(\widehat{\R}^n) .
  \end{align*}
  Conversely, the inverse Fourier transform $\check{\wedge}$ of the wedge product in frequency space is
  \begin{align*}
    \mathfrak{F}^{-1}( \widehat{\alpha} \wedge \widehat{\beta} ) = \alpha \ \check{\wedge} \ \beta = \alpha \ast \beta \ \in \Omega^{\fdeg+l-n}(\R^n) .
  \end{align*}
  Hence the following diagram holds
  \begin{center}
    \begin{tikzcd}
      \Omega^{\fdeg+l-n} \arrow[d, "\displaystyle \mathfrak{F}"]
      &[15pt] \Omega^{\fdeg} \times \Omega^l \arrow[l, "\displaystyle \ast"] \arrow[r, "\displaystyle \wedge"] \arrow[d, "\displaystyle \mathfrak{F}"]
      &[15pt] \Omega^{\fdeg+l} \arrow[d, "\displaystyle \mathfrak{F}"]
      \\[25pt]
      \widehat{\Omega}^{2n-\fdeg-l}
      &[15pt] \widehat{\Omega}^{n-\fdeg} \times \widehat{\Omega}^{n-l} \arrow[l, "\displaystyle \wedge = \hat{\ast}"] \arrow[r, "\displaystyle \hat{\wedge} = \ast"]
      &[15pt] \widehat{\Omega}^{n-\fdeg-l}
    \end{tikzcd}
  \end{center}
\end{proposition}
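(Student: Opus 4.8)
The plan is to transport the classical Fubini--plus--change-of-variables proof of the convolution theorem into the $\Z_2$-graded setting of Definition~\ref{def:diff_form:ft}. By bilinearity of the wedge product, of the convolution of forms, and of $\mathfrak{F}$, it suffices to work with arbitrary $\alpha,\beta$ under the standard integrability hypotheses (so that the fibre integrals and the Fubini interchange below converge exactly as in the scalar case). An alternative, more pedestrian route is to reduce everything to monomial forms $\alpha = f\,\dd x^I$, $\beta = g\,\dd x^J$: one writes $\widehat{\alpha}$ and $\widehat{\beta}$ explicitly using Remark~\ref{remark:diff_form:ft:index_mapping}, applies the scalar convolution theorem to the coefficient functions $f,g$, and checks a combinatorial identity among permutation signs; I would keep this in reserve as a cross-check rather than the main argument.

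I would prove first the ``$\ast\to\wedge$'' identity $\mathfrak{F}(\alpha\ast\beta)=\widehat{\alpha}\wedge\widehat{\beta}$; the ``Conversely'' claim $\mathfrak{F}^{-1}(\widehat{\alpha}\wedge\widehat{\beta})=\alpha\ast\beta$ then follows by applying $\mathfrak{F}^{-1}$ and invoking Proposition~\ref{prop:fourier_form:inverse}. Unfolding the fibre integral that defines $\alpha\ast\beta$, and using that the $x$-fibre integral commutes with wedging by $e^{-i\langle y,\xi\rangle}$ and $e^{\dd y^q\,\partial/\partial\xi^q}$ (neither of which carries a $\dd x^i$), gives
\begin{align*}
  \mathfrak{F}(\alpha\ast\beta)(\xi)=\frac{1}{(2\pi)^{n/2}}\int_{\R^n}\!\!\int_{\R^n}\alpha(x)\wedge\beta(y-x)\wedge e^{-i\langle y,\xi\rangle}\wedge e^{\dd y^q\,\partial/\partial\xi^q}.
\end{align*}
Substituting $y=x+z$ (Jacobian one) replaces $\beta(y-x)$ by $\beta(z)$ written in the differentials $\dd z^i$, splits $e^{-i\langle y,\xi\rangle}=e^{-i\langle x,\xi\rangle}e^{-i\langle z,\xi\rangle}$, and --- since each $\dd x^q\otimes\partial/\partial\xi^q$ has even parity and the rule~\eqref{eq:fourierforms:tensor_product:parity} makes the family $\{\dd x^q\otimes\partial/\partial\xi^q,\ \dd z^q\otimes\partial/\partial\xi^q\}$ pairwise commuting --- factors the form-exponential as $e^{(\dd x^q+\dd z^q)\partial/\partial\xi^q}=e^{\dd x^q\partial/\partial\xi^q}\,e^{\dd z^q\partial/\partial\xi^q}$; moreover the combined $x,y$-fibre integral becomes the $x,z$-fibre integral because $\dd x^1\wedge\cdots\wedge\dd x^n\wedge\dd y^1\wedge\cdots\wedge\dd y^n=\dd x^1\wedge\cdots\wedge\dd x^n\wedge\dd z^1\wedge\cdots\wedge\dd z^n$. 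Commuting $\beta(z)$ leftward past $e^{\dd x^q\partial/\partial\xi^q}$ and collecting graded signs, the integrand becomes the product of the two blocks $\alpha(x)\wedge e^{-i\langle x,\xi\rangle}\wedge e^{\dd x^q\partial/\partial\xi^q}$ and $\beta(z)\wedge e^{-i\langle z,\xi\rangle}\wedge e^{\dd z^q\partial/\partial\xi^q}$, whose fibre integrals over $x$ and $z$ are, by Definition~\ref{def:diff_form:ft}, $(2\pi)^{n/2}\widehat{\alpha}(\xi)$ and $(2\pi)^{n/2}\widehat{\beta}(\xi)$.

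The ``$\wedge\to\ast$'' identity $\mathfrak{F}(\alpha\wedge\beta)=\widehat{\alpha}\ast\widehat{\beta}$ is obtained by the mirror computation: expand $\widehat{\alpha}$ and $\widehat{\beta}$ as the frequency-domain integrals of Definition~\ref{def:diff_form:ft}, use $\partial/\partial(\xi-\eta)^q=-\partial/\partial\eta^q$, apply Fubini, and observe that the ordinary Lebesgue integration over the intermediate frequency variable $\eta$ produces a Dirac mass that collapses the two spatial integration variables; what remains is $\mathfrak{F}$ applied to the pointwise wedge $(\alpha\wedge\beta)(x)$, with the leftover $\partial/\partial\eta$'s folded back into $\partial/\partial\xi$'s to reconstitute $e^{\dd x^q\partial/\partial\xi^q}$. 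The two $\mathfrak{F}^{-1}$-statements of the diagram then follow once more from Proposition~\ref{prop:fourier_form:inverse}.

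The routine parts are the Fubini interchange and the substitution $y=x+z$. The delicate part --- the actual content --- is the $\Z_2$-graded sign accounting: one must verify that the parity factors incurred when (i) moving $\beta$, respectively $\beta(z)$, past the form-exponential, (ii) re-sorting the $\dd x^q\otimes\partial/\partial\xi^q$ and $\dd z^q\otimes\partial/\partial\xi^q$ contributions via~\eqref{eq:fourierforms:tensor_product:parity:1}, and (iii) in the $\wedge\to\ast$ direction, replacing $\partial/\partial(\xi-\eta)^q$ by $-\partial/\partial\eta^q$ and recombining, all multiply up to exactly the sign implicit in the wedge $\widehat{\alpha}\wedge\widehat{\beta}$ (equivalently, to the $(-1)^{\lfloor(\fdeg+l)/2\rfloor}$ and permutation signs of Remark~\ref{remark:diff_form:ft:index_mapping}). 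This is where the argument can fail and where, as in~\cite{Kalkman1993}, the conventions must be chosen with care; along the way one should also track the $(2\pi)^{\pm n/2}$ factors, which --- precisely as in the scalar case, where $\mathfrak{F}(f\ast g)=(2\pi)^{n/2}\hat f\hat g$ --- are most conveniently absorbed into the normalization of the convolution $\ast$ on forms so that the diagram commutes on the nose.
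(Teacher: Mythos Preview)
Your proposal is correct and is precisely the ``direct calculation'' the paper alludes to: the paper's own proof consists of the single sentence ``This can be proved by a direct calculation. See again~\cite{Kalkman1993},'' and your Fubini-plus-change-of-variables argument with $\Z_2$-graded sign bookkeeping is exactly how that calculation goes (and how Kalkman does it). Your write-up is in fact considerably more detailed than what the paper provides, and your flagging of the parity accounting and the $(2\pi)^{n/2}$ normalization as the places requiring care is appropriate.
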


\begin{proof}
  This can be proved by a direct calculation. See again~\cite{Kalkman1993}.
\end{proof}

\begin{remark}
  It follows from the above definition and the usual properties of the wedge product that the convolution vanishes unless $r + l \geq n$.
  For $r + l < n$ one is interested in the convolution over a sub-manifold and working with differential forms requires one to make this explicit (i.e. to first pullback the form to the sub-manifold and then perform the integration there).
\end{remark}


\begin{example}
  Let $\alpha = \alpha_3 \, \dd x^1 \wedge \dd x^2 \in \Omega^2(\R^3)$ and $\beta = \beta_1 \, \dd x^1 + \beta_2 \, \dd x^2 + \beta_3 \, \dd x^3  \in \Omega^1(\R^3)$.
  Then
  \begin{align*}
    (\alpha \ast \beta)(\dd y) = \int_{\R^n} \alpha(\dd x) \wedge \beta( \dd y - \dd x )
  \end{align*}
  with $\beta( \dd y - \dd x ) = \sum_{i=1}^3 \beta_i( x - y) (\dd y^i - \dd x^i )$.
  Using linearity and the anti-symmetry of the wedge product we obtain
  \begin{align*}
    (\alpha \ast \beta)(\dd y)
    &= \int_{\R^n} \alpha_3(x) \, \dd x^1 \wedge \dd x^2 \wedge \beta_3(x-y) \, (\dd y^3 - \dd x^3 )
    \\[5pt]
    &= \int_{\R^n} \alpha_3(x) \, \beta_3(x-y) \, \dd x^1 \wedge \dd x^2 \wedge \dd x^3
    \\[5pt]
    &= \alpha_3 \ast \beta_3
  \end{align*}
  where the convolution in the last line is those of scalar functions.
\end{example}

\paragraph{Hodge star and codifferential}
The following result characterizes the Fourier transform of the Hodge dual.

\begin{proposition}
  \label{prop:hodge_dual:fourier}
  Let $\alpha \in \Omega^{\fdeg}(\R^n)$. Then
  \begin{align*}
    \mathfrak{F}\big( \! \star \alpha \big)
    = \hat{\star} \, \widehat{\alpha}
   = -(-1)^{\lfloor r/2 \rfloor} \! \! \sum_{j_1 < \cdots < j_r} \hat{\alpha}_{j_1,\cdots,j_r} \,  s_n(\sigma) \Big( \! \star \dpp{}{\xi^{\sigma_{n-\fdeg+1}}} \wedge \cdots \wedge \dpp{}{\xi^{\sigma_{n}}} \Big)
  \end{align*}
  where the Hodge dual on the right hand side is the standard one for $\widehat{\Omega}(\widehat{\R}^n)$ and
  \begin{align*}
    s_n(\sigma) =
    \left\{
    \begin{array}{cc}
      (-1)^r & n = 2
      \\
      - \mathrm{sgn}(\sigma) & n = 3
    \end{array}
    \right. .
  \end{align*}
\end{proposition}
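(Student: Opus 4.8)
The plan is to reduce the identity to the cotangent basis forms and then track the sign factors dimension by dimension. Since the Hodge star acts only on the form part of a differential form and the Fourier transform of Definition~\ref{def:diff_form:ft} only rearranges the basis part while taking the ordinary scalar Fourier transform of each coordinate function, it is enough to establish the claim for a single basis form $\dd x^J := \dd x^{j_1} \wedge \cdots \wedge \dd x^{j_\fdeg}$ with $J = (j_1 < \cdots < j_\fdeg)$; the general statement then follows by summing against the scalar transforms $\hat\alpha_{j_1,\cdots,j_r}$. Throughout I write $\sigma = \sigma(J) \in S_n$ for the completing permutation of Remark~\ref{remark:diff_form:ft:index_mapping}, $K$ for the complementary ordered multi-index, and $\rho$ for the completing permutation associated with $K$; an elementary block-interchange argument gives $\mathrm{sgn}(\sigma)\,\mathrm{sgn}(\rho) = (-1)^{\fdeg(n-\fdeg)}$.

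First I would compute the left-hand side $\mathfrak F(\star\,\dd x^J)$. By the coordinate formulas for the Hodge star (Eq.~\ref{eq:hodge_dual:R3} and its $\R^2$ analogue) one has $\star\,\dd x^J = \mathrm{sgn}(\sigma)\,\dd x^K$, which is an $(n-\fdeg)$-form; applying the rule of Remark~\ref{remark:diff_form:ft:index_mapping} for the Fourier transform of basis forms to it yields $\mathfrak F(\star\,\dd x^J) = -(-1)^{\lfloor (n-\fdeg)/2 \rfloor}\,\mathrm{sgn}(\sigma)\,\mathrm{sgn}(\rho)\,\partial/\partial\xi^{j_1} \wedge \cdots \wedge \partial/\partial\xi^{j_\fdeg}$, i.e. $-(-1)^{\lfloor (n-\fdeg)/2 \rfloor + \fdeg(n-\fdeg)}$ times the basis multivector on the indices of $J$.

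Next I would compute the right-hand side. Remark~\ref{remark:diff_form:ft:index_mapping} applied directly to $\dd x^J$ gives $\mathfrak F(\dd x^J) = -(-1)^{\lfloor \fdeg/2 \rfloor}\,\mathrm{sgn}(\sigma)\,\partial/\partial\xi^{K}$, where $\partial/\partial\xi^{K}$ is the basis $(n-\fdeg)$-vector on the complementary indices; applying the standard Hodge star on $\widehat\Omega(\widehat\R^n)$ — the $\partial/\partial\xi$-analogue of Eq.~\ref{eq:hodge_dual:R3} — sends $\partial/\partial\xi^{K}$ to $\mathrm{sgn}(\rho)\,\partial/\partial\xi^{j_1} \wedge \cdots \wedge \partial/\partial\xi^{j_\fdeg}$. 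Comparing the two results, they are proportional to the same basis multivector, and the proposition asserts that the proportionality is accounted for by the scalar $-(-1)^{\lfloor \fdeg/2 \rfloor}\,s_n(\sigma)$ appearing in the displayed formula. Matching the scalar prefactors thus reduces the proposition to an identity relating $(-1)^{\lfloor (n-\fdeg)/2 \rfloor}$, $(-1)^{\lfloor \fdeg/2 \rfloor}$, $\mathrm{sgn}(\sigma)$, and $s_n(\sigma)$; using $\mathrm{sgn}(\sigma)\,\mathrm{sgn}(\rho) = (-1)^{\fdeg(n-\fdeg)}$ and $\star\star = (-1)^{\fdeg(n-\fdeg)}$ on $\widehat\Omega(\widehat\R^n)$, this collapses to $s_n(\sigma) = (-1)^\fdeg$ when $n = 2$ and to $s_n(\sigma) = -\mathrm{sgn}(\sigma)$ when $n = 3$, exactly as in the statement. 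I would conclude by checking the finitely many cases, $\fdeg \in \{0,1,2\}$ for $n = 2$ and $\fdeg \in \{0,1,2,3\}$ for $n = 3$, directly against the explicit formulas in Eq.~\ref{eq:diff_form:ft:r2}, Eq.~\ref{eq:diff_form:ft:r3}, and Eq.~\ref{eq:hodge_dual:R3}.

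The main obstacle is the sign bookkeeping rather than any single step. Three sources of signs have to be reconciled: the floor-exponents $(-1)^{\lfloor \fdeg/2 \rfloor}$ and $(-1)^{\lfloor (n-\fdeg)/2 \rfloor}$ produced by the two applications of the Fourier-transform rule for basis forms; the permutation signs $\mathrm{sgn}(\sigma)$, $\mathrm{sgn}(\rho)$ together with their block-interchange relation; and the genuine $n$-dependence of the Hodge star itself, already visible in $\star\star = (-1)^{\fdeg(n-\fdeg)}$ and in the extra minus sign carried by $\star\,\dd x^2 = -\dd x^1 \wedge \dd x^3$ in $\R^3$ that has no counterpart in $\R^2$. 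Because these contributions do not assemble into a single dimension-free expression, the only clean way to record the result is the case-defined factor $s_n(\sigma)$, and carrying out the cancellations in the two cases $n = 2$ and $n = 3$ is where the actual work of the proof sits.
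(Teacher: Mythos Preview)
Your proposal is correct and follows exactly the route the paper indicates: the paper's proof is a one-line sketch, ``The result can be verified with a coordinate calculation using Remark~\ref{remark:diff_form:ft:index_mapping},'' and you have simply carried out that coordinate calculation in detail, reducing by linearity to basis forms, applying the Fourier-transform rule for basis forms from Remark~\ref{remark:diff_form:ft:index_mapping} on both sides, and matching the resulting sign factors case by case for $n=2,3$.
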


\begin{proof}
  The result can be verified with a coordinate calculation using Remark~\ref{remark:diff_form:ft:index_mapping} that already established that the Fourier transform has a structure very similar to the Hodge dual.
\end{proof}

We hence have that up to a sign the Fourier transform of the Hodge dual of the differential form basis functions is the Hodge dual in frequency space of their Fourier transforms.

The above result for the Hodge star operator together with Proposition~\ref{prop:exterior_derivative:fourier} implies that the Fourier transform of the codifferential of a form $\alpha \in \Omega^r(\R^n)$ is
\begin{align*}
  \widehat{\delta \alpha} = (-1)^{n-1} \star \, \ip_{i \xi} \star \widehat{\alpha} .
\end{align*}
\commentCL{The signs that are introduced }

\paragraph{Plancherel theorem}
The Plancherel theorem is a central result for the scalar Fourier transform.
The next proposition establishes the analogue for differential forms.

\begin{proposition}[Plancherel theorem for differential forms]
  \label{prop:parseval:forms}
  With $\alpha , \beta \in \Omega^{\fdeg}(\R^n)$,
  \begin{align*}
    \big\langle \! \big\langle \alpha , \beta \big\rangle \! \big\rangle
    = \big\langle \! \big\langle \widehat{\alpha} , \widehat{\beta} \big\rangle \! \big\rangle .
  \end{align*}
\end{proposition}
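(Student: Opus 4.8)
The plan is to reduce the statement to the classical scalar Plancherel theorem applied term by term to the coordinate functions. Writing $\alpha = \sum_{j_1<\cdots<j_\fdeg}\alpha_{j_1,\cdots,j_\fdeg}\,\dd x^{j_1}\wedge\cdots\wedge\dd x^{j_\fdeg}$ and likewise for $\beta$, the definition of the $L_2$-inner product in Eq.~\ref{eq:forms:L2} with the canonical metric (so that index raising is trivial) gives
\begin{align*}
  \langle \! \langle \alpha , \beta \rangle \! \rangle = \sum_{j_1<\cdots<j_\fdeg}\int_{\R^n}\alpha_{j_1,\cdots,j_\fdeg}(x)\,\beta_{j_1,\cdots,j_\fdeg}(x)\,\dd x .
\end{align*}
First I would record the analogous expansion on the frequency side: by Definition~\ref{def:diff_form:ft} and Remark~\ref{remark:diff_form:ft:index_mapping}, $\widehat{\alpha}$ is the $(n-\fdeg)$-form whose coordinate function in the basis direction $\partial/\partial\xi^{\sigma_{n-\fdeg+1}}\wedge\cdots\wedge\partial/\partial\xi^{\sigma_n}$ equals $-(-1)^{\lfloor \fdeg/2\rfloor}\,\mathrm{sgn}(\sigma)\,\widehat{\alpha}_{j_1,\cdots,j_\fdeg}(\xi)$, where $\widehat{\alpha}_{j_1,\cdots,j_\fdeg}$ is the ordinary scalar Fourier transform of $\alpha_{j_1,\cdots,j_\fdeg}$ and $\sigma$ is any permutation whose first $\fdeg$ entries are $j_1,\cdots,j_\fdeg$. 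The map sending the ordered multi-index $(j_1,\cdots,j_\fdeg)$ to the increasingly reordered complementary multi-index $(\sigma_{n-\fdeg+1},\cdots,\sigma_n)$ is a bijection between ordered $\fdeg$-multi-indices and ordered $(n-\fdeg)$-multi-indices, and under this reordering the sign $\mathrm{sgn}(\sigma)$ is well defined.

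Next I would evaluate $\langle \! \langle \widehat{\alpha} , \widehat{\beta} \rangle \! \rangle$ using Eq.~\ref{eq:forms:L2}, now on $\widehat{\R}^n$ with the canonical metric induced on the $\partial/\partial\xi^i$ and with the Hermitian extension of the pairing, since the frequency-domain coordinate functions are complex-valued. Collecting the two factors $-(-1)^{\lfloor \fdeg/2\rfloor}\mathrm{sgn}(\sigma)$ coming from $\widehat{\alpha}$ and from the (conjugated) coordinate function of $\widehat{\beta}$, they combine to $\bigl(-(-1)^{\lfloor \fdeg/2\rfloor}\mathrm{sgn}(\sigma)\bigr)^2 = 1$, so the prefactors disappear and
\begin{align*}
  \langle \! \langle \widehat{\alpha} , \widehat{\beta} \rangle \! \rangle = \sum_{j_1<\cdots<j_\fdeg}\int_{\widehat{\R}^n}\widehat{\alpha}_{j_1,\cdots,j_\fdeg}(\xi)\,\overline{\widehat{\beta}_{j_1,\cdots,j_\fdeg}(\xi)}\,\dd\xi ,
\end{align*}
the sum being over the same index set as above because the multi-index map is a bijection. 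The classical scalar Plancherel theorem applied to each pair $\alpha_{j_1,\cdots,j_\fdeg},\beta_{j_1,\cdots,j_\fdeg}\in L_2(\R^n)$ then equates the two sums term by term, which proves the claim.

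The genuinely delicate points, which I would treat with care, are (i) the bookkeeping of the two sign/permutation factors and verifying that they really enter squared; this hinges on the observation already recorded in Remark~\ref{remark:diff_form:ft:index_mapping} that $\mathfrak{F}$ of the basis monomials has exactly the same combinatorial structure as the Hodge star, so the complementary-multi-index bijection and its sign are identical for $\alpha$ and $\beta$; and (ii) the passage from the real $L_2$ pairing in the spatial domain to the Hermitian $L_2$ pairing in the frequency domain, where the conjugation on $\widehat{\beta}$ is precisely what is needed to match the scalar identity $\int fg\,\dd x = \int\widehat{f}\,\overline{\widehat{g}}\,\dd\xi$. An alternative, more synthetic route would use $\langle \! \langle \alpha , \beta \rangle \! \rangle = \int_{\R^n}\alpha\wedge\star\beta$ together with Eq.~\ref{eq:fouier_transform:vol_integral}, Proposition~\ref{prop:exterior:convolution} for $\mathfrak{F}(\alpha\wedge\star\beta)$ and Proposition~\ref{prop:hodge_dual:fourier} for $\mathfrak{F}(\star\beta)$, evaluating the resulting convolution of frequency-domain forms at the origin, where for real forms the reflection $\xi\mapsto-\xi$ in the convolution becomes the required conjugation. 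I would present the coordinate argument as the main proof, as it is the most transparent and directly mirrors the other coordinate verifications in this section.
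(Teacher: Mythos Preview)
Your argument is correct, and it proceeds along a genuinely different line from the paper's own proof. The paper does not reduce to scalar Plancherel coordinate by coordinate; instead it writes $\llangle \alpha,\beta\rrangle = \int_{\R^n}\mathfrak{F}^{-1}(\widehat{\alpha})\wedge\star\,\mathfrak{F}^{-1}(\widehat{\beta})$, expands both inverse transforms explicitly using the exponential of form basis functions and the graded multiplication rule of Eq.~\ref{eq:fourierforms:tensor_product:parity}, isolates the $x$-dependence, and integrates over $x$ to produce $\delta(\xi-\eta)$, after which the remaining $\eta$-integral is recognized as $\llangle\widehat{\alpha},\widehat{\beta}\rrangle$. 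This is carried out in detail for the single component $\alpha=\alpha_3\,\dd x^3$, $\beta=\beta_3\,\dd x^3$ in $\R^3$, with the remaining cases declared analogous.

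Your route is more elementary and more uniform: by invoking Remark~\ref{remark:diff_form:ft:index_mapping} you observe that the Fourier transform acts on form monomials exactly like a signed complementary-index map, so the same sign appears for $\widehat{\alpha}$ and $\widehat{\beta}$ and drops out of the Hermitian pairing, reducing everything to a sum of scalar Plancherel identities. This handles all multi-indices at once and makes the real-to-Hermitian passage explicit. The paper's computation, by contrast, serves as a worked illustration of the super-vector-space machinery (the graded tensor product rule, the form of $\hat{\star}$, etc.) and thereby exercises Definition~\ref{def:diff_form:ft} directly; what it loses in brevity it gains as a consistency check on the formalism. Your alternative ``synthetic'' sketch via Proposition~\ref{prop:exterior:convolution} and Eq.~\ref{eq:fouier_transform:vol_integral} is closer in spirit to the paper's approach, though the paper does not pass through the convolution theorem but rather through the double-integral $\delta$-function argument.
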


\begin{proof}
  As in the classical case, the results follows from a direct calculation using
  \begin{align*}
   \big\langle \! \big\langle \alpha , \beta \big\rangle \! \big\rangle
   = \big\langle \! \big\langle \mathfrak{F}^{-1}(\widehat{\alpha}) \, , \, \mathfrak{F}^{-1}(\widehat{\beta}) \big\rangle \! \big\rangle
   = \int_{\R^n} \mathfrak{F}^{-1}(\widehat{\alpha}) \wedge \star \, \mathfrak{F}^{-1}(\widehat{\beta}) .
  \end{align*}
  We will show it for $\alpha = \alpha_3(x) \, \dd x^3$, $\beta = \beta_3(x) \, \dd x^3$. The required Fourier transforms are in this case
  \begin{align*}
    \widehat{\alpha} = -\widehat{\alpha}_3(x) \frac{\partial}{\partial \xi^1} \wedge \frac{\partial}{\partial \xi^2}
    \quad \quad \quad
    \widehat{\star \beta} = \widehat{\beta}_3(x) \frac{\partial}{\partial \xi^3}
    \quad \quad \quad
    \star \widehat{\beta} = -\widehat{\beta}_3(x) \frac{\partial}{\partial \xi^3} .
  \end{align*}
  Then
  \begin{align*}
    \big\langle \! \big\langle \alpha \, , \, \beta \big\rangle \! \big\rangle
    &= \int_{\R^3} \alpha_3(x) \, \dd x^3 \, \wedge \, \beta_{3}(x) \, \dd x^1 \wedge \dd x^2
    \\[5pt]
    &= \int_{\R^3} \left( \frac{1}{(2 \pi)^{3/2}} \int_{\widehat{\R}^3} \widehat{\alpha}_3(\xi) \, e^{i \langle \xi , x \rangle} \, \frac{\partial}{\partial \xi^1} \wedge \frac{\partial}{\partial \xi^2} \wedge \left( \frac{\partial}{\partial \xi^3} \dd x^3 \right) \! \right)
    \\
    & \quad \wedge_x \left(  \frac{-1}{(2 \pi)^{3/2}} \int_{\widehat{\R}^3} \widehat{\beta}_3^*(\eta)  \, e^{-i \langle \eta , x \rangle} \, \frac{\partial}{\partial \eta^3} \wedge \left( -\frac{\partial}{\partial \eta^1} \wedge \frac{\partial}{\partial \eta^2} \, \dd x^1 \wedge_{x} \dd x^2 \right) \! \right) .
    \intertext{With the graded, anti-symmetric multiplication we have}
    \big\langle \! \big\langle \alpha \, , \, \beta \big\rangle \! \big\rangle
    &= \int_{\R^3} \left( \frac{1}{(2 \pi)^{3/2}} \int_{\widehat{\R}^3} \widehat{\alpha}_3(\xi) \, e^{i \langle \xi , x \rangle} \, \frac{\partial}{\partial \xi^1} \wedge \frac{\partial}{\partial \xi^2} \wedge  \frac{\partial}{\partial \xi^3} \otimes \dd x^3 \! \right)
    \\
    & \quad \wedge_x \left(  \frac{1}{(2 \pi)^{3/2}} \int_{\widehat{\R}^3} \widehat{\beta}_3^*(\eta)  \, e^{-i \langle \eta , x \rangle} \, \frac{\partial}{\partial \eta^1} \wedge \frac{\partial}{\partial \eta^2} \wedge \frac{\partial}{\partial \eta^3} \otimes \dd x^1 \wedge_{x} \dd x^2 \! \right) .
  \end{align*}
  \commentCL{The sign comes from the reordering of the tensor product.}
  Using linearity and separating the terms with $x$-dependence we obtain
  \begin{align*}
    \big\langle \! \big\langle \alpha \, , \, \beta \big\rangle \! \big\rangle
    &=
    \frac{-1}{(2 \pi)^{3}} \int_{\widehat{\R}^3} \int_{\widehat{\R}^3} \left( \int_{\R^3}  e^{-i \langle \eta , x \rangle} \dd x^3 \wedge_x \, e^{i \langle \xi , x \rangle} \, \dd x^1 \wedge_{x} \dd x^2 \right)
    \\[4pt]
    & \quad \quad \quad \quad \times \left( \widehat{\alpha}_3(\xi)  \, \frac{\partial}{\partial \xi^1} \wedge \frac{\partial}{\partial \xi^2} \wedge \frac{\partial}{\partial \xi^3} \right) \left( \widehat{\beta}_3^*(\eta) \frac{\partial}{\partial \eta^1} \wedge \frac{\partial}{\partial \eta^2} \wedge \frac{\partial}{\partial \eta^3} \right) .
    \intertext{Evaluating the integral over $x$ yields}
       \big\langle \! \big\langle \alpha \, , \, \beta \big\rangle \! \big\rangle
       &= -\int_{\widehat{\R}^3} \int_{\widehat{\R}^3} \, \delta( \xi - \eta ) \, \left( \widehat{\alpha}_3(\xi)  \, \frac{\partial}{\partial \xi^1} \wedge \frac{\partial}{\partial \xi^2} \wedge \frac{\partial}{\partial \xi^3} \right)
       \\[4pt]
       & \qquad \qquad \qquad \qquad \qquad \qquad \qquad \times \left( \widehat{\beta}_3^*(\eta) \frac{\partial}{\partial \eta^1} \wedge \frac{\partial}{\partial \eta^2} \wedge \frac{\partial}{\partial \eta^3} \right)
   \end{align*}
  Finally, with the integration over $\eta$ we have
  \begin{align*}
  \big\langle \! \big\langle \alpha \, , \, \beta \big\rangle \! \big\rangle
    &= -\int_{\widehat{\R}^3} \widehat{\alpha}_3(\xi) \, i \widehat{\beta}_3^*(\xi) \, \frac{\partial}{\partial \xi^1} \wedge \frac{\partial}{\partial \xi^2} \wedge \frac{\partial}{\partial \xi^3}
    \\[7pt]
    &= \int_{\widehat{\R}^3} \underbrace{\left( \widehat{\alpha}_3(\xi) \frac{\partial}{\partial \xi^1} \wedge \frac{\partial}{\partial \xi^2} \right)}_{\displaystyle \widehat{\alpha}}
      \wedge \underbrace{\left( -\widehat{\beta}_3^*(\xi) \frac{\partial}{\partial \xi^3}\right)}_{\displaystyle \star \widehat{\beta}^*}
    \\[1pt]
    &= \big\langle \! \big\langle \widehat{\alpha} \, , \, \widehat{\beta} \big\rangle \! \big\rangle .
  \end{align*}
  The other cases follow by analogous calculations.
\end{proof}

\paragraph{Hodge-Helmholtz decomposition}

The Hodge-Helmholtz decomposition provides a characterization of the domain and image of the exterior derivative.
It will be central to our formulation of the local spectral exterior calculus.

\begin{theorem}[Hodge-Helmholtz decomposition]
  \label{thm:hodge_helmholtz}
  Let $L_2(\Omega^{\fdeg},\R^n)$ be the space of differential forms of degree $\fdeg$ on $\R^n$ with finite $L_2$-norm. Then
  \begin{align}
  \label{eq:hodge_decomposition}
    L_2(\Omega^{\fdeg},\R^n) = L_2(\Omega_{\dd}^{\fdeg},\R^n) \oplus L_2(\Omega_{\delta}^{\fdeg},\R^n)
  \end{align}
  where $L_2(\Omega_{\dd}^{\fdeg},\R^n)$ is the $L_2$-space of exact forms and $L_2(\Omega_{\delta}^{\fdeg},\R^n)$ those of co-exact forms.
  Hence any $\omega \in L_2(\Omega^{\fdeg},\R^n)$ can be written as $\omega = \omega_{\dd} + \omega_{\delta} = \dd \alpha + \delta \beta$ where $\alpha \in L_2(\Omega_{\dd}^{\fdeg-1},\R^n)$ and $\beta \in L_2(\Omega_{\delta}^{\fdeg+1},\R^n)$.
  In frequency space,
  \begin{align}
  \label{eq:hodge_decomposition:frequency}
    L_2(\widehat{\Omega}^{n-\fdeg},\widehat{\R}^n) = L_2(\widehat{\Omega}_{\dd}^{n-\fdeg},\widehat{\R}^n) \oplus L_2(\widehat{\Omega}_{\delta}^{n-\fdeg},\widehat{\R}^n) ,
  \end{align}
  i.e. $\widehat{\omega} = \widehat{\omega}_{\dd} + \widehat{\omega}_{\delta}$ with
  \begin{subequations}
  \label{eq:hodge_decomposition:frequency:spherical}
  \begin{align}
    \widehat{\omega}_{\dd} = \sum_{j_1 < \cdots < j_{n-r}} (\widehat{\omega}_{\dd})_{j_1, \cdots, j_{n-r}} \, \frac{\partial}{\partial \hat{\theta}^{j_1}} \wedge \cdots \wedge \frac{\partial}{\partial \hat{\theta}^{j_{n-r}}}
    \\
    \widehat{\omega}_{\delta} = \sum_{j_1 < \cdots < j_{n-r}} (\widehat{\omega}_{\delta})_{j_1, \cdots ,j_{n-r}} \, \frac{\partial}{\partial \hat{\theta}^{j_1}} \wedge \cdots \wedge \frac{\partial}{\partial \hat{\theta}^{j_{n-r-1}}} \wedge \frac{\partial}{\partial \hat{r}}
  \end{align}
  \end{subequations}
  where $\{ \partial / \partial \hat{\theta}^1 , \cdots , \partial / \partial \hat{\theta}^{n-1} , \partial / \partial \hat{r} \}$ is the basis for the tangent space in spherical coordinates in frequency space.
\end{theorem}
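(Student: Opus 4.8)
The plan is to prove the frequency-space statement, equations~\eqref{eq:hodge_decomposition:frequency} and~\eqref{eq:hodge_decomposition:frequency:spherical}, directly and pointwise in $\xi$, and then to transfer it to physical space using the Plancherel theorem, Proposition~\ref{prop:parseval:forms}. Everything rests on Proposition~\ref{prop:exterior_derivative:fourier}: in frequency space the exterior derivative is the interior product $\hat{\dd} = \ip_{i\xi}$, and in spherical coordinates the covector $\xi$ is purely radial, $\xi = |\xi|\,\dd\hat{r}$. Hence, at each fixed $\xi \neq 0$, $\hat{\dd}$ acts on the fiber $\widehat{\Omega}^{\bullet}_\xi$ as a nonzero multiple of contraction with the radial direction, and the de Rahm complex in frequency space reduces fiberwise to the Koszul complex of that single covector.

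Fix $\xi\neq 0$ and work in the spherical frame $\{\partial/\partial\hat\theta^1,\dots,\partial/\partial\hat\theta^{n-1},\partial/\partial\hat r\}$. Every multivector splits uniquely as $\widehat\omega(\xi) = a(\xi) + \partial/\partial\hat r \wedge b(\xi)$ with $a,b$ built only from the $\partial/\partial\hat\theta^j$; this is exactly the decomposition written in~\eqref{eq:hodge_decomposition:frequency:spherical}, with $\widehat\omega_{\dd}$ the ``radial-free'' summand and $\widehat\omega_{\delta}$ the ``radial-containing'' one. The key object is the contracting homotopy $h_\xi$, a nonzero multiple of exterior multiplication by $\partial/\partial\hat r$, which satisfies the Cartan-type identity $\ip_{i\xi}\,h_\xi + h_\xi\,\ip_{i\xi} = \mathrm{id}$ on $\widehat{\Omega}^{\bullet}_\xi$ (a one-line computation from the Leibniz rule for the interior product and $\dd\hat r(\partial/\partial\hat r)=1$). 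Thus $P_{\dd} := \ip_{i\xi}\,h_\xi$ and $P_{\delta} := h_\xi\,\ip_{i\xi}$ are complementary idempotents summing to $\mathrm{id}$: $P_{\dd}$ projects onto $\ker\ip_{i\xi}=\mathrm{im}\,\ip_{i\xi}$, which is precisely the radial-free subspace, and $P_{\delta}$ onto the radial-containing one. Since $\widehat\omega_{\dd}=P_{\dd}\widehat\omega$ lies in $\mathrm{im}\,\ip_{i\xi}=\mathrm{im}\,\hat{\dd}$ and $\widehat{\Omega}^{\,n-\fdeg+1}(\widehat\R^n)$ is the Fourier image of $\Omega^{\fdeg-1}(\R^n)$, Proposition~\ref{prop:exterior_derivative:fourier} shows $\omega_{\dd}=\mathfrak{F}^{-1}(\widehat\omega_{\dd})$ is exact. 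Orthogonality of the two summands is the elementary fact that spherical coordinates are orthogonal, so $\partial/\partial\hat r \perp \partial/\partial\hat\theta^j$ in each fiber and hence two basis multivectors are orthogonal unless they use the same index set.

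For the co-exact summand, apply the codifferential formula $\widehat{\delta\beta} = (-1)^{n-1}\star\,\ip_{i\xi}\star\widehat{\beta}$ together with Proposition~\ref{prop:hodge_dual:fourier}: the frequency-space Hodge star is a fiberwise bijection interchanging the radial-free and radial-containing subspaces, so $\mathrm{im}\,\hat\delta = \star(\mathrm{im}\,\ip_{i\xi})$ is exactly the radial-containing subspace, which contains $\widehat\omega_{\delta}=P_{\delta}\widehat\omega$; hence $\omega_{\delta}$ is co-exact. There is no harmonic remainder, since a form that is both radial-free and radial-containing vanishes at every $\xi\neq 0$ and the set $\{\xi=0\}$ has measure zero. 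Integrating the pointwise orthogonal splitting and using Proposition~\ref{prop:parseval:forms} gives the $L_2$-orthogonal direct sum~\eqref{eq:hodge_decomposition:frequency}; and since $\mathfrak{F}$ is an isometry $L_2(\Omega^{\fdeg},\R^n)\to L_2(\widehat\Omega^{n-\fdeg},\widehat\R^n)$ carrying exact forms to the radial-free part and co-exact forms to the radial-containing part (Proposition~\ref{prop:exterior_derivative:fourier} and the codifferential formula), pulling back yields~\eqref{eq:hodge_decomposition}.

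The step I expect to be the main obstacle is the behaviour at the origin and the precise meaning of ``$L_2$-space of exact/co-exact forms''. The primitive produced by $h_\xi$ carries a factor $|\xi|^{-1}$, so it need not itself be $L_2$; one must therefore read $L_2(\Omega_{\dd}^{\fdeg},\R^n)$ and $L_2(\Omega_{\delta}^{\fdeg},\R^n)$ as the $L_2$-closures of the genuinely exact, resp. co-exact, forms (equivalently, one works in the homogeneous Sobolev scale used for the frame statements elsewhere in the paper), and then check that these closures remain complementary and that the degenerate locus $\{\xi=0\}$ is harmless. Everything else is the routine sign bookkeeping in the Leibniz rule for $\ip_{i\xi}$ and in Proposition~\ref{prop:hodge_dual:fourier}, which I would relegate to a coordinate verification.
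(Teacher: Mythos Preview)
Your argument is correct, but the route differs from the paper's. The paper takes the spatial decomposition~\eqref{eq:hodge_decomposition} as a known fact, citing~\cite[Thm.~8.5.1]{Marsden2004} for the Hodge decomposition and~\cite{Troyanov2009} for the absence of harmonic forms in $L_2$; it then obtains~\eqref{eq:hodge_decomposition:frequency} by linearity of $\mathfrak{F}$ and reads off~\eqref{eq:hodge_decomposition:frequency:spherical} from $\hat{\dd}=\ip_{i\xi}$ with $\xi=|\xi|\,\dd\hat r$, noting that $\ip_{i\xi}\widehat\omega=0$ precisely when $\widehat\omega$ has no $\partial/\partial\hat r$ factor. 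You instead establish the frequency-space splitting first, via the Koszul contracting homotopy $h_\xi\propto(\partial/\partial\hat r)\wedge\,\cdot$ and the Cartan identity $\ip_{i\xi}h_\xi+h_\xi\ip_{i\xi}=\mathrm{id}$, and then pull back to space through Plancherel. Your approach is more self-contained (no appeal to the literature for the decomposition itself) and makes explicit why the fiberwise splitting is exact; the paper's is shorter because it outsources the analytic content. The concern you flag about $|\xi|^{-1}$ in the primitive and the meaning of ``$L_2$ exact/co-exact'' is legitimate and is exactly what the paper finesses by quoting the standard result and invoking the absence of $L_2$ harmonic forms; your proposed reading via $L_2$-closures is the right fix and is consistent with how the paper later uses $\dot L_2^1$ for co-exact forms.
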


\begin{proof}
  The spatial part of the theorem is a standard result, see e.g.~\cite[Thm. 8.5.1]{Marsden2004}.
  The harmonic forms, which usually form the third part of the Hodge decomposition, are not present in our case since $L_2(\Omega^{\fdeg},\R^n)$ does not contain any nonzero polynomials, cf.~\cite{Troyanov2009}.

  Eq.~\ref{eq:hodge_decomposition:frequency} follows from the linearity of the Fourier transform.
  In spherical coordinates $\xi = \vert \xi \vert \dd \hat{r}$, i.e. it is given by the radial basis vector of $T \widehat{\R}^n$.
  By $\hat{\dd} = \ip_{i \xi}$ we have that $\hat{\dd} = \ip_{i \xi} \widehat{\omega}$ vanishes if and only if the coordinate representation of $\widehat{\omega}$ does not contain $\partial / \partial \hat{r}$ so that $\xi = \dd \hat{r}$ can not be paired.
  This implies Eq.~\ref{eq:hodge_decomposition:frequency:spherical}.
\end{proof}

Theorem~\ref{thm:hodge_helmholtz} states that an exact differential form is one those Fourier transform is tangential to the frequency sphere $\widehat{S}^{n-1}$ while for a co-exact one there is a radial component in the direction of $\partial / \partial \hat{r}$.
This is particularly simple in $\R^2$ where exact and co-exact forms are exactly those tangential and radial to the circle in the Fourier domain, respectively.
The geometric characterization of exact and co-exact forms in the Fourier domain in Theorem~\ref{thm:hodge_helmholtz} will play an important role in the following.

\paragraph{Homogeneous Sobolev spaces for differential forms}

With the Hodge-Helmholtz decomposition and the Plancherel theorem we can introduce homogeneous Sobolev spaces for differential forms.
These will provide the functional analytic setting of our local spectral exterior calculus.
As we will clarify shortly, it is natural to restrict the definition to co-exact forms.

\begin{definition}
  \label{def:homo_sobolev:forms}
  The \emph{homogeneous Sobolev space $\dot{L}_2^1(\Omega_{\delta}^{\fdeg},\R^n)$} is
\begin{subequations}
\begin{align}
  \label{eq:homo_sobolev:forms:def}
  \dot{L}_2^1(\Omega_{\delta}^{\fdeg} , \R^n ) = \Big\{ \alpha \in \Omega_{\delta}^{\fdeg}( \R^n ) \ \Big\vert \ \Vert \alpha \Vert_{\dot{L}_2^1}^2 = \big\langle \! \big\langle \alpha \, , \, \alpha \big\rangle \! \big\rangle_{\dot{L}_2^1} < \infty \Big\} .
\end{align}
where the $\dot{L}_2^1(\Omega_{\delta}^{\fdeg} , \R^n)$-inner product is given by
\begin{align}
  \label{eq:homo_sobolev:forms:inner}
  \big\langle \! \big\langle \alpha \, , \, \beta \big\rangle \! \big\rangle_{\dot{L}_2^1}
  = \big\langle \! \big\langle \dd \alpha \, , \, \dd \beta \big\rangle \! \big\rangle_{{L}_2}
  = \int_{\widehat{\R}^n} \vert \xi \vert^2 \, \widehat{\alpha} \wedge \star \widehat{\beta}^* .
\end{align}
\end{subequations}
\end{definition}

The frequency definition on the right hand side of Eq.~\ref{eq:homo_sobolev:forms:inner}, where no interior products $\hat{\dd} = \ip_{i \xi}$ appears, can be established using a straightforward coordinate calculation.

In the scalar case $\dot{L}_2^1(\R^n)$, the definition of homogeneous Sobolev spaces typically uses co-sets $[f]_0$ of functions modulo constants, i.e. $0$ order polynomials, since this is required for the spaces to be Hilbert, see~\ref{sec:preliminaries:homoegeneous_sobolev}.
For a compact domain $U \subset \R^n$, the Hodge-Helmholtz decomposition for functions is $\Omega^0(U) = \mathcal{H}(U) \oplus \Omega_{\delta}^0(U)$, where $\mathcal{H}(U)$ are the harmonic ones satisfying $\Delta f = 0$, and for a star-shaped domain this are exactly the constants.
The classical definition hence works with the co-exact $0$-forms modulo harmonic ones.
In Def.~\ref{def:homo_sobolev:forms} we work directly with co-exact differentials, which, as the following result shows, also ensures that $\dot{L}_2^1(\R^n , \Omega^{\fdeg})$ is Hilbert.

\begin{proposition}
  \label{prop:homo_sobolev:hilbert}
  For $n=2,3$, the homogeneous Sobolev space $\dot{L}_2^1(\Omega_{\delta}^{\fdeg},\R^n)$ is a Hilbert space.
\end{proposition}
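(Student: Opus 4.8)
The plan is to realise $\dot{L}_2^1(\Omega_{\delta}^{\fdeg},\R^n)$, via the exterior derivative, as a closed subspace of the Hilbert space $L_2(\Omega^{\fdeg+1},\R^n)$ and to transport completeness back along that identification. The restriction to \emph{co-exact} forms in Def.~\ref{def:homo_sobolev:forms} is exactly what makes the bilinear form in Eq.~\ref{eq:homo_sobolev:forms:inner} an inner product rather than merely a semi-inner product, and the Hodge--Helmholtz decomposition (Theorem~\ref{thm:hodge_helmholtz}) together with the Fourier description of $\dd$ (Proposition~\ref{prop:exterior_derivative:fourier}) supplies the required isometric isomorphism.

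First I would verify that $\llangle \cdot , \cdot \rrangle_{\dot{L}_2^1}$ is a genuine inner product. Bilinearity and Hermitian symmetry are inherited from the $L_2$-pairing. For positive-definiteness, suppose $\llangle \alpha , \alpha \rrangle_{\dot{L}_2^1} = \llangle \dd\alpha , \dd\alpha \rrangle_{L_2} = 0$, so that $\dd\alpha = 0$. Then $\alpha$ is closed; being also co-exact, it satisfies $\Delta\alpha = (\dd\delta + \delta\dd)\alpha = 0$, i.e.\ it is harmonic, and since $\R^n$ carries no nonzero harmonic form in the relevant class (as recalled in the proof of Theorem~\ref{thm:hodge_helmholtz}) this forces $\alpha = 0$. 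The same fact is visible directly in frequency space: writing a co-exact $\widehat\alpha$ in spherical coordinates with its radial factor split off, $\widehat\alpha = \dpp{}{\hat{r}} \wedge \widehat\tau$, Proposition~\ref{prop:exterior_derivative:fourier} gives $\widehat{\dd\alpha} = \ip_{i\xi}\widehat\alpha = i\,|\xi|\,\widehat\tau$, which vanishes only if $\widehat\tau$, hence $\widehat\alpha$, vanishes away from the origin.

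Next I would establish completeness. By the definition of the $\dot{L}_2^1$-inner product, $\dd$ maps $\dot{L}_2^1(\Omega_{\delta}^{\fdeg},\R^n)$ \emph{isometrically} into $L_2(\Omega_{\dd}^{\fdeg+1},\R^n)$, and by Theorem~\ref{thm:hodge_helmholtz} the latter is an orthogonal summand of $L_2(\Omega^{\fdeg+1},\R^n)$, hence closed and therefore complete. It remains to see that $\dd$ is \emph{onto}. Given $\eta \in L_2(\Omega_{\dd}^{\fdeg+1},\R^n)$, Theorem~\ref{thm:hodge_helmholtz} shows $\widehat\eta$ is tangential to the frequency spheres, i.e.\ its spherical expansion involves only the $\dpp{}{\hat{\theta}^{i}}$. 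Since $\xi = |\xi|\,\dd\hat{r}$, setting
\begin{align*}
  \widehat\alpha(\xi) \;:=\; \frac{1}{i\,|\xi|}\,\dpp{}{\hat{r}} \wedge \widehat\eta(\xi)
\end{align*}
and using $\dd\hat{r}(\partial/\partial\hat{r}) = 1$, the Leibniz rule for the interior product, and the absence of a radial factor in $\widehat\eta$, one computes $\ip_{i\xi}\widehat\alpha = \widehat\eta$, so $\dd\alpha = \eta$ by Proposition~\ref{prop:exterior_derivative:fourier}. The form $\widehat\alpha$ has precisely the co-exact structure of Eq.~\ref{eq:hodge_decomposition:frequency:spherical}, so $\alpha = \mathfrak{F}^{-1}(\widehat\alpha) \in \Omega_{\delta}^{\fdeg}(\R^n)$, and $\Vert \alpha \Vert_{\dot{L}_2^1}^2 = \Vert \dd\alpha \Vert_{L_2}^2 = \Vert \eta \Vert_{L_2}^2 < \infty$, so $\alpha \in \dot{L}_2^1(\Omega_{\delta}^{\fdeg},\R^n)$. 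Hence $\dd$ is an isometric isomorphism onto the complete space $L_2(\Omega_{\dd}^{\fdeg+1},\R^n)$: a Cauchy sequence $(\alpha_k)$ has $(\dd\alpha_k)$ Cauchy, hence convergent to some $\eta$, and for the preimage $\alpha$ of $\eta$ one gets $\Vert \alpha_k - \alpha \Vert_{\dot{L}_2^1} = \Vert \dd\alpha_k - \eta \Vert_{L_2} \to 0$.

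The step I expect to require the most care is the claim that the $\widehat\alpha$ obtained by the radial ``division by $i\xi$'' really corresponds to an admissible element of the ambient space $\Omega_{\delta}^{\fdeg}(\R^n)$, and not merely to a tempered distribution. The difficulty is entirely at low frequencies, where $\widehat\alpha \sim \widehat\eta / |\xi|$: for $n = 3$ this is locally integrable, whereas for $n = 2$ one sits at the critical homogeneous-Sobolev exponent and local integrability can fail. This is the familiar subtlety of homogeneous Sobolev spaces (cf.~\ref{sec:preliminaries:homoegeneous_sobolev}), and it is resolved by the same device used there: the high-frequency truncation of $\alpha$ is a bona fide $L_2$ co-exact form, while the low-frequency tail is precisely the distributional piece the homogeneous norm is built to control, so that with $\Omega_{\delta}^{\fdeg}(\R^n)$ interpreted through that convention the argument above applies verbatim for $n = 2, 3$.
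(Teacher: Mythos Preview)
Your argument is correct but proceeds along a genuinely different line from the paper's. The paper observes that the $\dot{L}_2^1$ inner product, written coordinatewise via the Hodge dual and Plancherel, is a finite sum of scalar weighted-$L_2$ pairings $\int |\xi|^2\,\widehat\alpha_j\,\widehat\beta^{\,j}\,\dd\xi$, and then invokes the classical scalar result that $\dot{L}_2^1(\R^n)$ is Hilbert (Galdi, Ch.~II.6) for each coordinate function separately. Your route is instead intrinsic: you identify $\dot{L}_2^1(\Omega_\delta^{\fdeg},\R^n)$ with the closed subspace $L_2(\Omega_{\dd}^{\fdeg+1},\R^n)$ of the ordinary $L_2$ space of $(\fdeg{+}1)$-forms via the exterior derivative, and pull completeness back along that isometric isomorphism.

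What each buys: the paper's reduction is shorter and offloads the analytic subtleties to a standard reference, but the appeal to the scalar result coordinate-by-coordinate is slightly awkward because co-exactness couples the coordinate functions (the quotient by constants in the scalar theory does not align neatly with the co-exact condition on a vector-valued form). Your argument avoids that mismatch entirely and makes essential use of the paper's own machinery---the symbol $\widehat\dd = \ip_{i\xi}$ from Proposition~\ref{prop:exterior_derivative:fourier} and the spherical-coordinate form of the Hodge--Helmholtz decomposition in Theorem~\ref{thm:hodge_helmholtz}---so it fits the surrounding development more organically. You are also more explicit about the low-frequency issue in dimension two; the paper's proof does not flag it at all, but it is the same difficulty hiding inside the cited scalar result, and your resolution (defer to the homogeneous-Sobolev convention of \ref{sec:preliminaries:homoegeneous_sobolev}) is the right one.
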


\begin{proof}
  With the definition of the Hodge dual in terms of the metric tensor~\cite[Ch. 7.2]{Marsden2004}, Eq.~\ref{eq:homo_sobolev:forms:inner} can be written as
  \begin{align*}
    \int_{\widehat{\R}^n} \vert \xi \vert^2 \, \widehat{\alpha} \wedge \star \widehat{\beta}^*
    &= \sum_{j=1}^n \int_{\widehat{\R}^n} \vert \xi \vert^2 \, \widehat{\alpha}_j( \xi ) \, \widehat{\beta}^j(\xi) \, \dd \xi^n
  \end{align*}
  where $\dd \xi^n$ is the canonical volume form on $\R^n$.
  But thus the scalar result~\cite[Ch. II.6]{Galdi2011} applies to each coordinate function.
\end{proof}


Def.~\ref{def:homo_sobolev:forms} is only for co-exact differential forms.
It can be combined with the usual $L_2$-space for exact ones, i.e. $L_2(\Omega_{\dd}^{\fdeg} , \R^n )$, and in light of the de Rahm complex where exact forms are the image of co-exact ones this is the natural complement to $\dot{L}_2^1(\Omega_{\delta}^{\fdeg} , \R^n )$.
For conciseness we will write $\dot{L}_2^0(\Omega_{\dd}^{\fdeg} , \R^n ) \cong L_2(\Omega_{\dd}^{\fdeg} , \R^n )$ in the following.
Working with $L_2(\Omega_{\dd}^{\fdeg} , \R^n )$ and $\dot{L}_2^1(\Omega_{\delta}^{\fdeg} , \R^n )$ is also compatible with the classical spaces $H(\mathrm{curl},\R^n)$ and $H(\mathrm{div},\R^n)$, as the following remark shows.

\begin{remark}
  \label{remark:hcurl}
  In numerical analysis, the spaces $H(\mathrm{curl},\R^n)$ and $H(\mathrm{div},\R^n)$ of $L_2$-vector fields whose curl respectively divergence are also in $L_2$ provide typically the functional analytic setting for the discretization of vector-valued partial differential equations, e.g.~\cite{Nedelec1980,Girault1986}, and these are also used in finite element-based discretizations of exterior calculus~\cite{Hiptmair2002,Arnold2018}.

  With the musical isomorphism to relate vector fields and $1$-forms and using the Hodge-Helmholtz decomposition, $H(\mathrm{curl},\R^n)$ can be written as
  \begin{align*}
    H(\mathrm{curl},\R^n) = \Big\{ u \in \mathfrak{X}(\R^n) \, \Big\vert \, u_{\dd}^{\flat} \in L_2(\Omega_{\dd}^1,\R^{n}) , \, u_{\delta}^{\flat} \in \dot{L}_2^1(\Omega_{\delta}^1,\R^{n})   \Big\}
  \end{align*}
  where no control of the derivative of the exact part $u_{\dd}^{\flat}$ is necessary since $\dd  u_{\dd}^{\flat} = 0$ and the $L_2$-condition for the co-exact one is automatically satisfied.
  \commentCL{since the lose regularity by taking the derivative in form of the $\vert \xi \vert$ factor in the Fourier domain}
  Thus,
  \begin{align*}
    H(\mathrm{curl},\R^n) &= L_2(\Omega_{\dd}^1,\R^{n})^{\sharp} \, \oplus \, \dot{L}_2^1(\Omega_{\delta}^1,\R^{n})^{\sharp}
  \end{align*}
  with the musical isomorphism on the right hand side understood element-wise.
  Analogously we have (in $\R^2$ the divergence corresponds to $\dd \star u^{\flat}$)
  \begin{align*}
    H(\mathrm{div},\R^2) &= \dot{L}_2^1(\star \Omega_{\dd}^1,\R^{2})^{\sharp} \, \oplus \, L_2(\Omega_{\delta}^1,\R^{2})^{\sharp}
    \\[5pt]
    H(\mathrm{div},\R^3) &= L_2(\Omega_{\dd}^2,\R^{3})^{\sharp} \, \oplus \, \dot{L}_2^1(\Omega_{\delta}^2,\R^{3})^{\sharp} .
  \end{align*}
  We will return to the last equations in the next section.
\end{remark}

Def.~\ref{def:homo_sobolev:forms} can be generalized to homogeneous Sobolev spaces of arbitrary order $s \in \R$, cf.~\cite{Hiptmair2012b}, but $\dot{L}_2^1(\Omega_{\delta}^{\fdeg},\R^n)$ suffices for our purposes and we hence leave this to future work.

\paragraph{Stokes' theorem}

An important result in the exterior calculus is Stokes' theorem, which, up to the musical isomorphism, includes the classical theorems of Green, Gauss, and Stokes in vector calculus as special cases.

Let $\M$ be an $\fdeg$-dimensional, smooth, compact sub-manifold of $\R^n$ with smooth boundary $\partial \M$ and let $\alpha \in \Omega^{\fdeg-1}(\R^n)$.
Then Stokes' theorem states that~\cite[Theorem 8.2.8]{Marsden2004}
\begin{align}
  \label{eq:stokes_thm}
  \int_{\partial \M} i^* \alpha = \int_{\M} \dd \alpha
\end{align}
where $i : \partial \M \to \M$ is the inclusion map (cf.~\cite[Ch. 2.7]{Frankel2011}).

The above formulation of Stokes' theorem is not well suited for the Fourier theory of differential forms introduced in the foregoing.
We hence rewrite it in a more amenable one using the characteristic functions $\chi_{\M}$ and $\chi_{\partial \M}$ of $\M$ and the boundary $\partial \M$, respectively. \commentCL{One should formally also use an inclusion map $i : \M \to \R^3$
The smooth extension of $\alpha$ to $\R^n$: use the argument here \url{https://math.stackexchange.com/questions/871038/differential-forms-on-submanifolds}; Thomas Richter also confirmed that this shouldn't be an issue.}
Since the manifold $\M$ is smooth, we can always choose local coordinates $\tilde{x}_i$ such that $\M$ is locally aligned with the first $r$ coordinates $\tilde{x}_1, \cdots , \tilde{x}_r$ of $\R^n$.
This carries over to $\dd \alpha$, which has then the representation $\alpha = \tilde{\alpha}(\tilde{x}) \, \dd \tilde{x}^1 \wedge \cdots \wedge \dd \tilde{x}^r$ for some coordinate function $\tilde{\alpha}(\tilde{x})$.
The form can be extended along the normal bundle of $\M$ to a smooth, compactly supported test form $\dd \alpha \in \mathcal{S}(\Omega^{r},\R^n)$ in the Schwartz space $\mathcal{S}(\Omega^{r},\R^n)$ supported in a local neighborhood of $\M$ in the ambient space $\R^n$ (e.g.~\cite[Lemma 2.27]{Lee2012}); by abuse of notation we will not distinguish $\dd \alpha$ and its smooth extension.
This $r$-form $\dd \alpha$ can then be completed to a volume form (that can be integrated) by $\wedge$-multiplying it with the characteristic differential form $\chi_{_{\M}} \in \mathcal{S}'(\Omega^{n-\fdeg},\R^n)$ of $\M$, i.e. a distribution-valued $(n-\fdeg)$-form whose support is $\M$.
In coordinates, it is given by the partial contraction
\begin{align}
  \label{eq:stokes_thm:characteristic:form}
  \chi_{_{_\M}} = \dd x^1 \wedge \cdots \wedge \dd x^n \left( \frac{\partial}{\partial u^1} , \, \cdots , \frac{\partial}{\partial u^{r}} , \, \cdots \right) ,
\end{align}
where the $\partial / \partial u^i \in \mathfrak{X}(\R^n)$ span the tangent space $T \M$ and have support only on $\M$ , which reflects the distributional character of $\chi_{_{_\M}}$, and $\dd x^1 \wedge \cdots \wedge \dd x^n$ is the canonical volume form on $\R^n$.

\begin{example}
  \label{ex:stokes:characteristic:circle}
  Let $\partial \M = S^1 \subset \R^2$ be the unit circle and $\alpha \in \Omega^1(S^1)$ with polar coordinate representation $\alpha = \alpha(\theta,\phi) \, \dd \theta$.
  Then the $1$-form in $\Omega^1(\R^2)$ obtained by a smooth extension in the normal direction is $\alpha =  \tilde{\alpha}(\theta,\phi) \, \dd \theta \in \Omega^1(\R^2)$.
  The tangent vector field $\vec{u}$ of $S^1$ embedded into $\R^2$ is $\vec{u} = \delta_{S^1}(x) \, \partial / \partial \theta \in \mathfrak{X}(\R^2)$ where the Dirac distribution $\delta_{S^1}(x) \in \mathcal{S}'(\R^n)$ is defined by
\begin{align}
  \label{eq:Dirac:circle}
  \langle \delta_{S^1} , \varphi \rangle = \int_{S^1} \varphi
\end{align}
for all test functions $\varphi \in \mathcal{S}(\R^2)$, cf.~\cite{Grohs2012}.
The characteristic $1$-form completing $\alpha$ is by Eq.~\ref{eq:stokes_thm:characteristic:form} thus
  \begin{align*}
  \chi_{_{S^1}} = \dd\theta \wedge \dd r \Big( \delta_{S^1}(x) \frac{\partial}{\partial \theta} \Big) = \delta_{S^1}(x) \, \dd r  .
  \end{align*}
  The integral of $\alpha$ over $S^1$ can hence also be written as
  \begin{align*}
    \int_{S^1} \alpha
    = \int_{\R^2} \tilde{\alpha}(x) \, \dd \theta \wedge \delta_{S^1}(x) \, \dd r
    = \int_{\R^2} \tilde{\alpha}(x) \, \delta_{S^1}(x) \, \dd \theta \wedge \dd r .
  \end{align*}
\end{example}

With the above construction, the right hand side of Stokes' theorem in Eq.~\ref{eq:stokes_thm} becomes
\begin{align*}
  \int_{\M} \dd \alpha = \int_{\R^n} \dd \alpha \wedge \chi_{_{\M}}
\end{align*}
where on the right we have a weak pairing between $\mathcal{S}(\Omega^{r},\R^n)$ and $\mathcal{S}'(\Omega^{n-r},\R^n)$.
To apply the formulation also to the left hand side of Stokes' theorem we use the change of variables theorem~\cite[Theorem 8.1.7]{Marsden2004},
\begin{align*}
  \int_{\partial \M} i^* \alpha = \int_{i(\partial \M)} \alpha
\end{align*}
where $i(\partial \M)$ is the sub-manifold in $\R^n$ spanned by the boundary $\partial \M$.
The argument for $\M$ then carries over since both $\M$ and $\partial \M$ are sub-manifolds of $\R^n$.
With this, we obtain the following form for Stokes' theorem
\begin{align}
  \label{eq:stokes_thm:characteristic}
  \int_{\R^n} \alpha \wedge \chi_{_{\partial \M}} = \int_{\R^n} \dd \alpha \wedge \chi_{_{\M}} .
\end{align}
that provides a weak formulation of the classical one in Eq.~\ref{eq:stokes_thm}.

\begin{example}
  \label{ex:chi_circle}
  Let $\fdeg = n$.
  The characteristic form $\chi_{_{\M}} \in \mathcal{S}'(\Omega^{n-r},\R^n)$ completing $\dd \alpha  \in \mathcal{S}(\Omega^{r},\R^n)$ to a volume form is then a $0$-form, i.e. the usual characteristic function of $\M$.
In this case it is well known from the classical divergence theorem that $\chi_{_{\partial \M}}$ is given by the weak gradient of $\chi_{_{\M}}$, cf.~\cite[p. 73]{McLean2000}, i.e.
\begin{align*}
  \chi_{\partial \M} = \delta_{\partial \M}(x) \, n_i \, \dd \tilde{x}^i
\end{align*}
where the $n_i$ are the components of the normal of $\partial \M$ and $\delta_{_{\partial \M}}(x)$ is the Dirac distribution defined analogously to Eq.~\ref{eq:Dirac:circle}.
In other words, when $\fdeg = n$ then $\chi_{_{\partial \M}}$ is nothing but the wavefront set $\mathrm{WF}(\chi_{_{\M}})$ of $\chi_{_{\M}}$~\cite[Ch. VI]{Taylor1981}.
We will return to the connection with the wavefront set in Sec.~\ref{sec:psiforms}.
\end{example}

The form of Stokes' theorem in Eq.~\ref{eq:stokes_thm:characteristic} can also be written using the $L_2$-pairing for differential forms, i.e. using the Hodge duals of $\chi_{_{\partial \M}}$ and $\chi_{_{\M}}$.
It then takes the form
\begin{align}
  \label{eq:stokes_thm:characteristic:inner_product}
  \big\langle \! \big\langle \alpha \, , \, \chi_{_{T \partial \M}} \big\rangle \! \big\rangle
  = \big\langle \! \big\langle \dd \alpha \, , \, \chi_{_{T \M}} \big\rangle \! \big\rangle .
\end{align}
It follows from Eq.~\ref{eq:stokes_thm:characteristic:form} that $\star \chi_{_{\partial \M}} = \chi_{_{T \partial \M}}$ is a local representation for the tangent space of $\partial \M$ and similarly $\star \chi_{_{\M}} = \chi_{_{T \M}}$.
In $\R^3$, for example, let the local tangent space of a $2$-surface be aligned with the $x_1$-$x_2$ plane.
Then $\chi_{_{\partial \M}} = \delta(x_3) \, \dd x^3$ and $\chi_{_{T \partial \M}} = \delta(x_3) \, \dd x^1 \wedge \dd x^2$.

Comparing the standard form of Stokes' theorem in Eq.~\ref{eq:stokes_thm} with the one in Eq.~\ref{eq:stokes_thm:characteristic:inner_product} one sees that both provide conceptually a very similar geometric description: the left hand side is the pairing of $\alpha$ with the tangent space $T \partial \M$ of the boundary and the right hand side those of $\dd \alpha$ with $T \M$.
However, in contrast to Eq.~\ref{eq:stokes_thm}, Eq.~\ref{eq:stokes_thm:characteristic} can be transferred to a frequency description using the Plancherel theorem.
This yields
\begin{align}
  \label{eq:stokes_thm:fourier}
  \int_{\widehat{\R}^n} \widehat{\alpha} \wedge \widehat{\chi}_{_{\partial \M}}
  = \int_{\widehat{\R}^n} \hat{\dd} \widehat{\alpha} \wedge \widehat{\chi}_{_{\M}} .
\end{align}
To obtain some insight into this formulation of Stokes' theorem, we will consider a concrete example.

\begin{example}
  \label{eq:stokes:decay:circle:fourier}
  Let $\M = B^2$, $\partial \M = S^1$, and $\alpha = \alpha_{\theta}(r) \, \dd\theta \in \Omega^1(B^2)$.
  Thus $\dd \alpha = - (\partial \alpha / \partial r) \, \dd \theta \wedge \dd r$.
  From Example~\ref{ex:stokes:characteristic:circle} we already know that the completion of $\alpha$ to a volume form is given by $\chi_{_{\partial \M}} = \chi_{_{S^1}} = \delta_{S^1}(r) \, \dd r$.
  The completion of $\dd \alpha$ is the $0$-form $\chi_{_{\M}} = \chi_{_{B^2}}(r)$.
  Stokes' theorem in the form in Eq.~\ref{eq:stokes_thm:characteristic} then becomes
  \begin{align*}
    \int_{\R^2} \alpha_{\theta}(r) \, \dd\theta \wedge \delta_{S^1}(r) \, \dd r
    &= \int_{\R^2} - \frac{\partial \alpha_{\theta}(r)}{\partial r} \, \dd \theta \wedge \dd r \wedge \chi_{_{B^2}}(r)
    \intertext{and re-arranging terms yields}
    \int_{\R^2} \alpha_{\theta}(r) \, \delta_{S^1}(r)\, \dd\theta \wedge \, \dd r
    &= \int_{\R^2} - \frac{\partial \alpha_{\theta}(r)}{\partial r} \, \chi_{_{B^2}}(r) \, \dd \theta \wedge \dd r .
  \end{align*}
  The equality can now easily be deduced using integration by parts.

  To write the last equations in the frequency domain as in Eq.~\ref{eq:stokes_thm:fourier}, the following Fourier transforms are required
  \begin{align*}
    \widehat{\alpha} &= \widehat{\alpha}(\hat{r}) \frac{\partial}{\partial \hat{r}}
    \\[4pt]
    \widehat{\chi}_{_{\partial \M}}
    &= \widehat{\delta}_{_{S^1}}(\xi) \frac{\partial}{\partial \hat{\theta}}
    = -J_1(\hat{r}) \frac{\partial}{\partial \hat{\theta}}
    \\[4pt]
    \widehat{\dd \alpha} &= \widehat{\frac{\partial \alpha}{\partial r}}(\hat{r}) = \widehat{\alpha}(\hat{r}) \, i \hat{r}
    \\[4pt]
    \widehat{\chi}_{_{\M}} &= \widehat{\delta}_{_{B^2}}(\xi) \, \frac{\partial}{\partial \hat{\theta}} \wedge \frac{\partial}{\partial \hat{r}} = \frac{J_1(\hat{r})}{\hat{r}} \frac{\partial}{\partial \hat{\theta}} \wedge \frac{\partial}{\partial \hat{r}}
  \end{align*}
  where $(\hat{\theta},\hat{r})$ are polar coordinates in frequency space,
  $J_1(\cdot)$ is the (cylindrical) Bessel function of order $1$, and for $\widehat{\dd \alpha}$ we used Proposition~\ref{prop:exterior_derivative:fourier} and that $\xi = \hat{r} \dd \hat{r}$ in polar coordinates.
  Inserting the Fourier transforms into Eq.~\ref{eq:stokes_thm:fourier} we obtain
  \begin{align*}
    -\int_{\widehat{\R}^2} \widehat{\alpha}(\hat{r}) \frac{\partial}{\partial \hat{r}} \wedge J_1(\hat{r}) \frac{\partial}{\partial \hat{\theta}}
    &= \int_{\widehat{\R}^2} \widehat{\alpha}(\hat{r}) \, \hat{r}  \wedge \frac{J_1(\hat{r})}{\hat{r}} \frac{\partial}{\partial \hat{\theta}} \wedge \frac{\partial}{\partial \hat{r}} .
    \intertext{Rearranging the terms yields}
   \int_{\widehat{\R}^2} \widehat{\alpha}( \hat{r}) \, J_1(\hat{r}) \, \frac{\partial}{\partial \hat{\theta}} \wedge \frac{\partial}{\partial \hat{r}}
    &= \int_{\widehat{\R}^2} \widehat{\alpha}(\hat{r}) \, \hat{r}  \frac{J_1(\hat{r})}{\hat{r}} \frac{\partial}{\partial \hat{\theta}} \wedge \frac{\partial}{\partial \hat{r}}
    \end{align*}
  and cancelling the $\hat{r}$ factor on the right hand side shows that the equality holds.
\end{example}

The above example provides insight into the mechanics behind Stokes' theorem in the Fourier domain:
The linear growth by $\vert \xi \vert$ in $\widehat{\dd \alpha}$ compared to $\widehat{\alpha}$, which is introduced by the exterior derivative, is compensated by the difference in the decay rates of $\widehat{\chi}_{_{\partial \M}}$ and $\widehat{\chi}_{_{\M}}$.
One hence has an interesting interweaving of functional analytic and geometric aspects in Eq.~\ref{eq:stokes_thm:characteristic} and Eq.~\ref{eq:stokes_thm:fourier} that is not apparent in the classical form of Stokes' theorem.

\begin{figure}
  \centering
  \includegraphics[width=1.\textwidth]{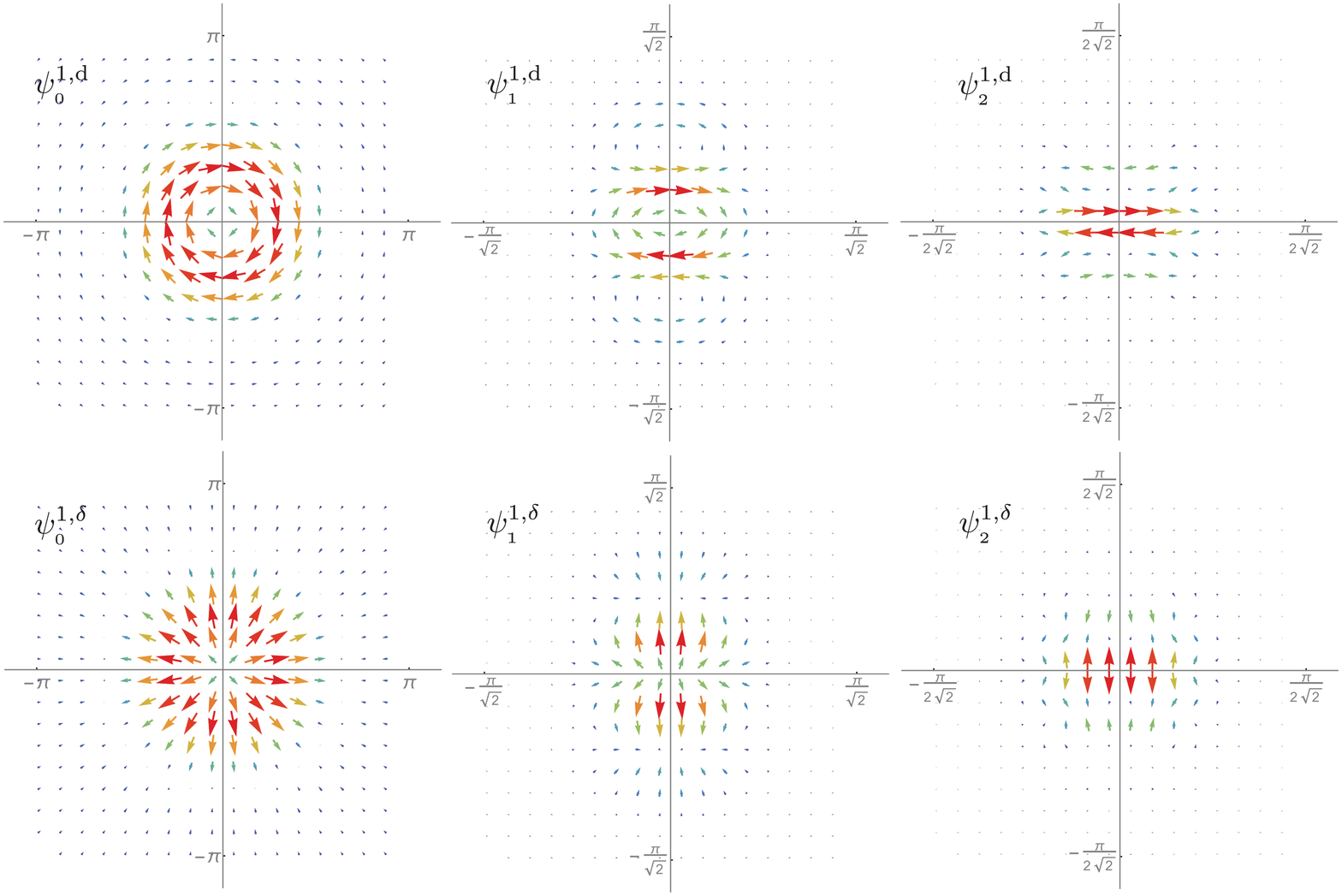}
  \caption{Examples for differential $1$-form mother wavelets $\psi_{j}^{1,\nu} \equiv \psi_{j,0,0}^{1,\nu,2}$ for different levels with exact forms $\psi_{j,0,0}^{1,\dd,2}$ in the top row and co-exact ones $\psi_{j,0,0}^{1,\delta,2}$ in the bottom row (visualized as vector fields). The anisotropic functions exist at different directions and model discontinuities aligned with their orientation (here along the $x_1$-axis), cf. Sec.~\ref{sec:psiforms:stokes} and Example~\ref{ex:laplace_de_rahm:cavity}. }
  \label{fig:psi_delta_22}
\end{figure}

\section[Psiec: A Local Spectral Exterior Calculus]{$\Psiec$: A Local Spectral Exterior Calculus}
\label{sec:psiforms}

In the last section, we studied the Fourier transform of differential forms and saw that the symbol of the exterior derivative is a radial operator, i.e. it acts on the radial part of a differential form's Fourier transform.
This structure aligned with spherical coordinates in the frequency domain is also reflected in the Hodge-Helmholtz decomposition, with an exact differential form being one those Fourier transform has no radial component.
Motivated by this, we will define the polar differential form wavelets of $\Psiec$ in spherical coordinates in Fourier space.
Compatible window functions providing the wavelet's space-frequency localization are given by the polar wavelets of Sec.~\ref{sec:polar_wavelets} and these ensure the admissibility of the frames, allow for flexible angular localization, and yield differential form wavelets that have closed form expressions in the spatial domain.
The functional analytic setting for our construction is motivated by the classical spaces  $H(\mathrm{curl},\R^n)$ and $H(\mathrm{div},\R^n)$ but we will from the outset distinguish between exact and co-exact differential forms, since this provides us with control over the kernel and image of the exterior derivative and the codifferential, cf. Remark~\ref{remark:hcurl}.
We will hence work with the homogeneous Sobolev spaces $\dot{L}_2^{\vert \nu \vert}(\Omega_{\nu}^{\fdeg},\R^n)$ introduced in Def.~\ref{def:homo_sobolev:forms}.

In the following, we will formally define differential forms wavelet and study their properties. Stokes' theorem and other results related to differential forms are considered in the sequel.
To not unnecessarily clutter the presentation, we collect proofs in Sec.~\ref{sec:psiforms:proofs}.


\subsection{Polar Differential Form Wavelets}

\begin{figure}[t]
  \centering
  \includegraphics[width=0.5275\textwidth]{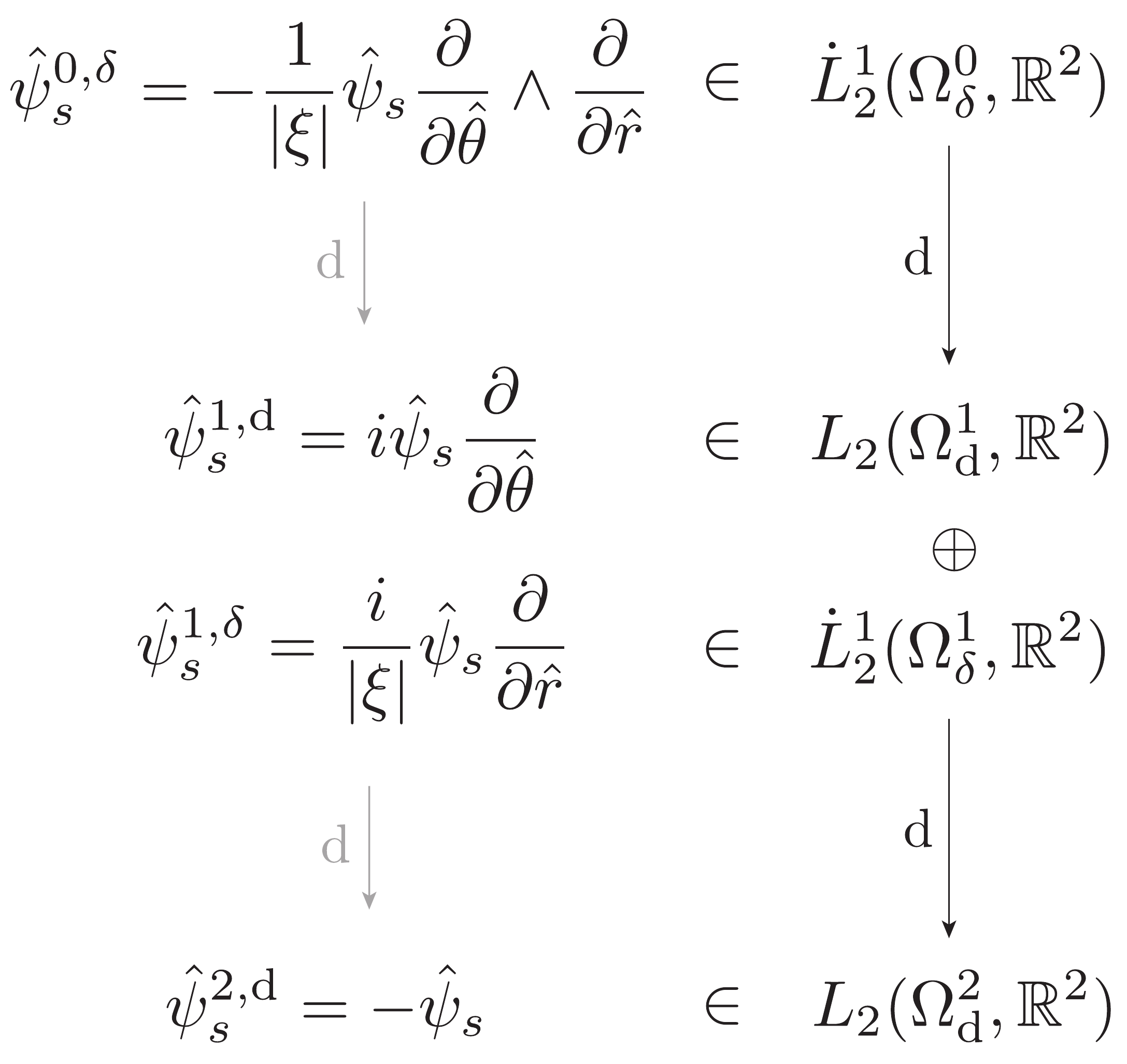}
  \caption{Structure of $\Psi\mathrm{ec}(\R^2)$.}
  \label{fig:psiec_r2}
\end{figure}

We begin by defining polar differential form wavelets.

\begin{definition}
  \label{def:psiforms}
  Let $\{ \partial / \partial \hat{\theta}$, $\partial / \partial \hat{r} \}$ and $\{ \partial / \partial \hat{\theta}$, $\partial / \partial \hat{\phi}$, $\partial / \partial \hat{r} \}$ be orthonormal frames in spherical coordinates for $T^* \widehat{\R}^2$ and $T^* \widehat{\R}^3$, respectively, with the first $n-1$ coordinates spanning $T^* S_{\xi}^{n-1}$.
  Furthermore, let $\nu \in \{ \dd , \delta \}$, with $\vert \dd \vert = 0$ and $\vert \delta \vert = 1$, and the index $a$ satisfy $a \in A_{n,\fdeg,\nu}$ with
  \commentCL{$A_{\fdeg,\nu}$ is needed for 1,2 forms in $\R^3$ where one requires two different differential form wavelets to span $\Omega_{\nu}$ (since $T^* S^2$ is $2$-dimensional)}
  \begin{align*}
    A_{n,\fdeg,\nu} =
    \left\{ \begin{array}{cc}
      \{ 1 , 2 \}  & n=3 \ \mathrm{ and } \ \fdeg=1, \nu = \delta; \fdeg=2,\nu = \dd
      \\
      \{ 1 \} &  \, \mathrm{otherwise}
    \end{array}
    \right.
  \end{align*}
  When $\hat{\psi}_s(\xi)$ is a polar wavelet in the sense of Sec.~\ref{sec:polar_wavelets} then the \emph{polar differential $\fdeg$-forms wavelets $\psi_{s,a}^{r,\nu,n}(x)$ in $\mathrm{\R^n}$}, defined in frequency space through their Fourier transform $\hat{\psi}_{s,a}^{r,\nu,n}(\xi)$, are as given in Fig.~\ref{fig:psiec_r2} and Fig.~\ref{fig:psiec_r3}.
\end{definition}

\vspace{0.2em}
\begin{remark} Remarks on Def.~\ref{def:psiforms}:
  \label{rem:def:psiforms}
  \begin{enumerate}
    \item The tangent space to the sphere $\hat{S}^2$ in frequency space is $2$-dimensional and hence there are two different wavelets that have a tangential component to $\hat{S}^2$. $A_{\fdeg,\nu}$ is used to index these.
    To simplify notation we will typically omit $a$ when $a \in \{ 1 \}$ and the same holds when $n$ is clear from the context.

    \item As discussed in Sec.~\ref{sec:fourier_forms}, the Fourier transform of an $\fdeg$-form is an $(n-\fdeg)$-form in frequency space.
  Although they are defined in the Fourier domain, we use the degree of the form in space in our nomenclature, i.e. $\hat{\psi}_{s,a}^{\fdeg,\nu,n} \in \widehat{\Omega}^{n-\fdeg}(\widehat{\R}^n)$.
    \item The above definition can be extended to differential forms on $\R^1$. There one only has a radial direction and the wavelets are hence given by $\hat{\psi}_{s}^{0,\delta,1} = -i \hat{\psi}_{s}(\xi) \, \partial / \partial \xi$ and $\hat{\psi}_{s}^{1,\dd,1} = \hat{\psi}_{s}(\xi)$.
        This extension is useful, for example, when one considers fiber integration, cf. Sec.~\ref{sec:psiform:slicing}.
  \end{enumerate}
\end{remark}

\begin{figure}[t]
  \centering
  \includegraphics[width=0.94\textwidth]{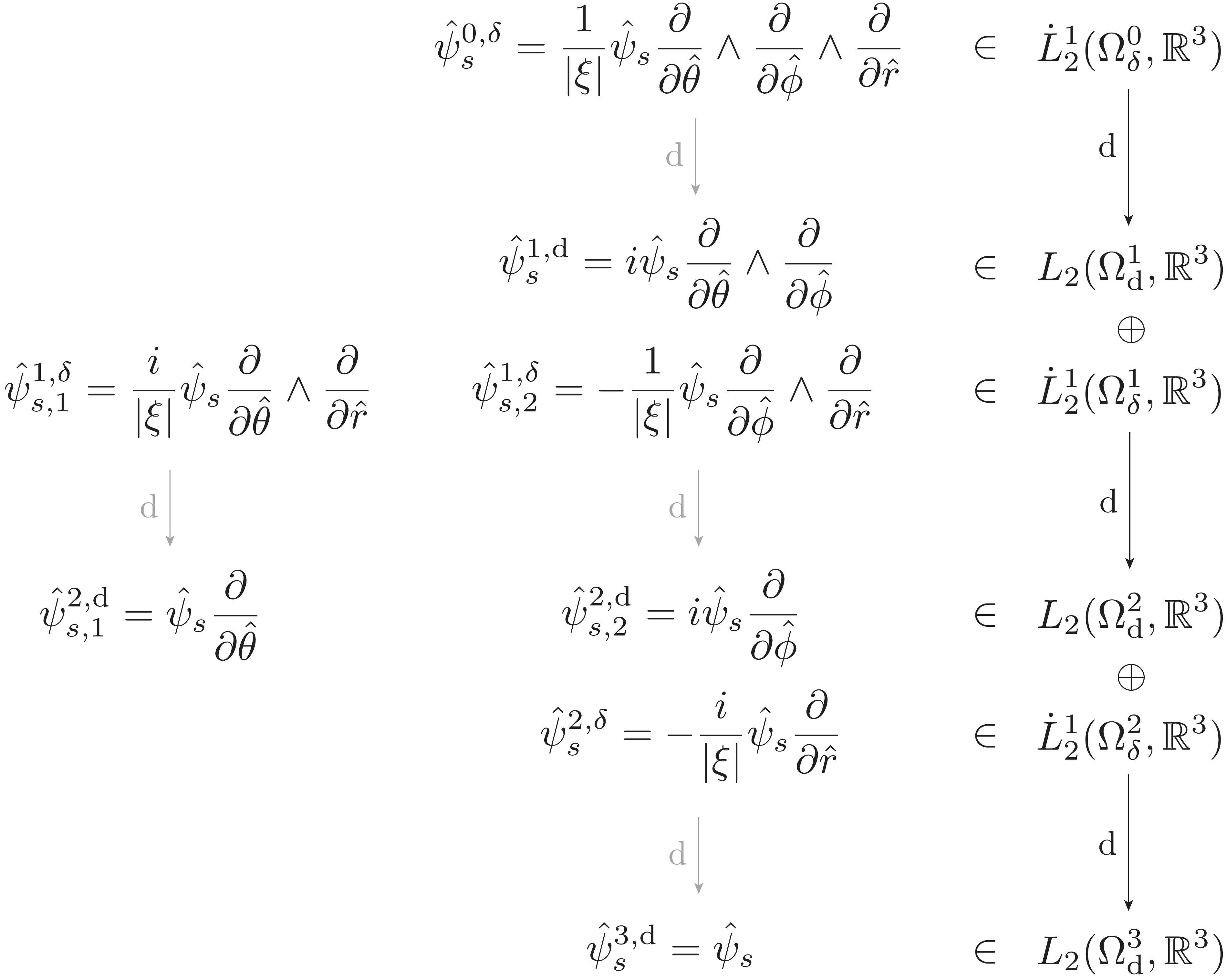}
  \caption{Structure of $\Psi\mathrm{ec}(\R^3)$.}
  \label{fig:psiec_r3}
\end{figure}

It follows immediately from the definition that differential $\fdeg$-forms wavelets are differential forms in sense of the continuous theory.
This is a vital difference to existing discretizations of exterior calculus, e.g.~\cite{Desbrun2006,Arnold2018}.
We will return to this point in the sequel.
The principal utility of differential form wavelets stems from the following result.

\begin{theorem}
  \label{thm:psiforms:frames}
  Let $\{ \psi_s \}_{s \in \mathcal{S}}$ be a scalar, tight polar wavelet frame for $L_2(\R^n)$, i.e. Proposition~\ref{prop:polarlet:2d:parseval} or Proposition~\ref{prop:tight_frame:3d} holds.
  Then the induced polar differential $\fdeg$-forms wavelets $\psi_{s,a}^{\fdeg,\nu,n}$ provide a tight frame for $\dot{L}_2^{\vert \nu \vert}(\Omega_{\nu}^{\fdeg},\R^n)$, i.e. any $r$-form $\alpha \in \dot{L}_2^{\vert \nu \vert}(\Omega_{\nu}^{\fdeg},\R^n)$ has the representation
  \begin{align*}
    \alpha(x) = \sum_{a \in A_{\fdeg,\nu}} \sum_{s \in \mathcal{S}} \Big\langle \! \! \Big\langle \alpha \, , \, \psi_{s,a}^{\fdeg,\nu,n} \Big\rangle \! \! \Big\rangle \, \psi_{s,a}^{\fdeg,\nu,n}(x) .
  \end{align*}

  The polar differential $r$-form wavelets are, furthermore, real-valued in space and have the following explicit expressions.
        In $\R^2$ they are:
      \begin{align*}
        \psi_{s}^{0,\delta}(x) &= \frac{2^j}{2\pi} \sum_{m} i^m \beta_m^s \, e^{i m \theta_x} \, h_m(\vert 2^{j_s} x - k_s \vert)
        \\[5pt]
        \psi_{s}^{1,\dd}(x) &= \frac{2^j}{4\pi}
          \sum_{\sigma \in \{ -1, 1\} } \sum_{m} i^{m_\sigma} \, \beta_m  \, e^{i m_\sigma \theta} \, h_{m_\sigma}(\vert 2^{j_s} x - k_s \vert) \big( - \! \sigma \, \dd x^1  + i \, \dd x^2)
        \\[5pt]
        \psi_{s}^{2,\delta}(x) &= -\frac{2^j}{4\pi}
         \sum_{\sigma \in \{ -1, 1\} } \sum_{m} i^{m_\sigma} \, \beta_m  \, e^{i m_\sigma \theta} \, h_{m_\sigma}(\vert 2^{j_s} x - k_s \vert)   \big( i \, \dd x^1 + \sigma \, \dd x^2 \big)
         \\[5pt]
        \psi_{s}^{2,\dd}(x) &= \frac{2^j}{2\pi} \sum_{m} i^m \beta_m^s \, e^{i m \theta_x} \, h_m(\vert 2^{j_s} x - k_s \vert) \, \dd x^1 \wedge \dd x^2
      \end{align*}
      where $m_{\sigma} = m + \sigma$ and $h_{m}(\cdot)$ is the Hankel transform of $\hat{h}(\vert \xi \vert)$ of order $m$.
      In $\R^3$ the differential $r$-form wavelets are:
      \begin{align*}
        \psi_{s}^{0,\delta}(x) &= \frac{-2^{3j/2}}{8 \pi^2} \sum_{l,m} i^l \, \kappa_{lm}^{s} \, y_{lm}(\bar{x}) \, h_{l}^{1}(\vert 2^{j_s} x - k_s \vert)
        \\[5pt]
        \psi_{s,a}^{1,\nu}(x) &= \frac{-2^{3j/2}}{8 \pi^2} \sum_{j_1=1}^3  \mathrm{sgn}(\sigma_j) \sum_{l,m} i^l \,\big( \hat{\psi}_{s,a}^{1,\nu} \big)_{lm}^{j_2,j_3} \, y_{lm}(\bar{x}) \, h_l^{\vert \nu \vert}(\vert 2^{j_s} x - k_s \vert) \, \dd x^{j_1}
        \\[5pt]
        \psi_{s,a}^{2,\nu}(x) &= \frac{-2^{3j/2}}{8 \pi^2} \sum_{j_1=1}^3 \mathrm{sgn}(\sigma_j) \sum_{l,m} i^l \big( \hat{\psi}_{s,a}^{2,\nu} \big)_{lm}^{j_1} \, y_{l m}(\bar{x}) \, h_{l}^{\vert \nu \vert}( \vert  2^{j_s} x - k_s \vert ) \, \dd x^{j_2} \wedge \dd x^{j_3}
        \\[5pt]
        \psi_{s}^{3,\dd }(x) &= \frac{-2^{3j/2}}{8 \pi^2} \sum_{l,m} i^l \, \kappa_{lm}^{s} \, y_{lm}(\bar{x}) \, h_{l}(\vert 2^{j_s} x - k_s \vert) \, \dd x^1 \wedge \dd x^2 \wedge \dd x^3
      \end{align*}
      where
      \commentCL{The $\pm i$ factor in the definition of the wavelets is subsumed into the coefficients. Constant factors and much easier to deal with this way (or at least the resulting expressions are much easier written generically when one does so.}
      \begin{align*}
        \big( \hat{\psi}_{s,2}^{1,\delta} \big)_{lm}^{j_2,j_3} &= -\sum_{l_1,m_1} \sum_{l_2,m_2} \kappa_{l_1 m_2}^s \, \big( \hat{\phi}^{j_1} \! \cdot \! \hat{r}^{j_2} )_{l_2 m_2} \, G_{l_1 m_1;l_2 m_2}^{l m}
        \\[4pt]
        \big( \hat{\psi}_{s,1}^{2,\dd} \big)_{lm}^{j_1} &= \sum_{l_1,m_1} \sum_{l_2,m_2} \kappa_{l_1 m_2}^s \, \hat{\theta}_{l_2 m_2}^{j_1} \, G_{l_1 m_1;l_2 m_2}^{l m}
      \end{align*}
      and analogous for the other cases of $\psi_{s,a}^{1,\nu,3}$ and $\psi_{s,a}^{1,\nu,3}$.
      The $\hat{\phi}^{j}$, $\hat{\phi}^{j}$, $\hat{r}^{j}$ are the components of the spherical form basis functions $\partial / \partial \hat{\theta}$, $\partial / \partial \hat{\phi}$, $\partial / \partial \hat{r}$ with respect to the Cartesian ones  $\partial / \partial \xi^j$.
      The radial window $h_{l}^{q}( \cdot )$ is given by the modified spherical Hankel transform
      \begin{align*}
        h_{l}^{(q)}(r) = \int_{\widehat{\R}^+} \frac{1}{(\hat{r})^q} \hat{h}( \hat{r}) \, j_l( \hat{r} \, r) \, \hat{r}^2 \, \dd \hat{r} .
      \end{align*}

\end{theorem}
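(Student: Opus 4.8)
The plan is to move everything to the Fourier domain via the Plancherel theorem for forms, Proposition~\ref{prop:parseval:forms}, where by the Hodge--Helmholtz decomposition, Theorem~\ref{thm:hodge_helmholtz}, the exact and co-exact summands are cut out cleanly by the spherical frame, and then to reduce the statement to the scalar tight frames of Proposition~\ref{prop:polarlet:2d:parseval} and Proposition~\ref{prop:tight_frame:3d}, applied to finitely many scalar coordinate functions. Concretely, for $\alpha\in\dot L_2^{|\nu|}(\Omega_\nu^{\fdeg},\R^n)$ the claimed reconstruction identity is, by Proposition~\ref{prop:parseval:forms}, equivalent to one for $\widehat\alpha\in\widehat\Omega^{n-\fdeg}(\widehat\R^n)$; and by Eq.~\ref{eq:hodge_decomposition:frequency:spherical}, in the orthonormal spherical frame $\{\partial/\partial\hat\theta^i,\partial/\partial\hat r\}$ the nonzero components of $\widehat\alpha$ lie along the $(n-\fdeg)$-fold wedges of the $\partial/\partial\hat\theta^i$ when $\nu=\dd$, and along those same wedges with one factor replaced by $\partial/\partial\hat r$ when $\nu=\delta$. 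There are $\binom{n-1}{n-\fdeg}$, respectively $\binom{n-1}{\fdeg}$, of these covectors; this number is $1$ for $n=2$ and also $1$ for $n=3$ except for $(\fdeg,\nu)=(1,\delta)$ and $(\fdeg,\nu)=(2,\dd)$ where it is $2$ --- exactly the cases in which $A_{n,\fdeg,\nu}=\{1,2\}$, so the index $a$ runs over a basis of the relevant space of spherical $(n-\fdeg)$-covectors. Each associated scalar coordinate function is an unconstrained element of $L_2(\widehat\R^n,|\xi|^{2|\nu|}\,\dd\xi)$, and the unitary rescaling $\hat f\mapsto|\xi|^{|\nu|}\hat f$ onto $L_2(\widehat\R^n)$ replaces the radial polar-wavelet window $\hat h$ by the one whose inverse transform is the modified (spherical) Hankel kernel $h_l^{(|\nu|)}$ of the statement.

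Since, by Def.~\ref{def:psiforms} and Fig.~\ref{fig:psiec_r2} / Fig.~\ref{fig:psiec_r3}, the form wavelet $\widehat\psi_{s,a}^{\fdeg,\nu,n}$ is precisely the (suitably rescaled) scalar polar wavelet $\hat\psi_s$ times the $a$-th of these spherical wedges, the tight-frame property now follows coordinate-by-coordinate from the scalar admissibility hypotheses --- the Calder\'on condition together with $U_j^H U_j=D_j$, $\tr D_j=1$, respectively $\delta_{l0}\delta_{m0}=\sum_t w_{j,t}\,G^{lm}\,w_{j,t}$ --- which are exactly what make the scalar polarlets a Parseval frame for $L_2(\widehat\R^n)$. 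The one genuine subtlety is that $\partial/\partial\hat\theta,\partial/\partial\hat\phi$ are not a global frame on $\widehat S^2$, so the argument must be run orientation by orientation, using the rotational covariance encoded in $\kappa_{lm}^{j,t}=\sum_{m'}W(\lambda_{j,t})_{l,m}^{m'}\kappa_{l,m'}$, and only summed over the orientations $t$ afterwards. I expect verifying that this passage to the Hodge--Helmholtz subspace, together with the attendant change from the spherical to the ambient Cartesian basis, costs no admissibility to be the main obstacle; once it is set up correctly it is essentially the bookkeeping that produces the closed-form coefficients in terms of the Gaunt coefficients $G_{l_1 m_1; l_2 m_2}^{lm}$.

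For real-valuedness I would verify the conjugation--reflection symmetry $\widehat\psi_{s,a}^{\fdeg,\nu,n}(-\xi)=\overline{\widehat\psi_{s,a}^{\fdeg,\nu,n}(\xi)}$, equivalently that every Cartesian coordinate function of $\widehat\psi_{s,a}^{\fdeg,\nu,n}$ has a real inverse transform: the scalar polarlet already satisfies $\hat\psi_s(-\xi)=\overline{\hat\psi_s(\xi)}$ under the usual symmetry of its angular coefficients, the spherical basis vectors transform under $\xi\mapsto-\xi$ with definite signs ($\partial/\partial\hat r\mapsto-\partial/\partial\hat r$, the angular ones picking up the reflection of the chart), and the parities $y_{lm}(-\bar\xi)=(-1)^l y_{lm}(\bar\xi)$ combine with the $i^l$ (in $\R^3$), respectively $i^m$ (in $\R^2$), factors from Def.~\ref{def:diff_form:ft} to make the combination real --- this is precisely why the exponential of the form basis functions in Def.~\ref{def:diff_form:ft} is taken without a complex unit, cf. the remark there. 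Finally, the explicit spatial expressions come from applying the inverse Fourier transform of Def.~\ref{def:diff_form:ft} to the frequency-space definition: one expands each spherical wedge in the Cartesian $\partial/\partial\xi^j$ (equivalently, writes $\partial/\partial\hat r$, $\partial/\partial\hat\theta$, $\partial/\partial\hat\phi$ through trigonometric functions of the angles, which in $\R^2$ yields the shifted orders $m_\sigma=m+\sigma$, $\sigma=\pm1$, from the $e^{\pm i\theta}$, and in $\R^3$ yields products of spherical harmonics and hence the Gaunt coefficients), inverts term by term using the closed forms in Eq.~\ref{eq:polarlets:2d:space} and Eq.~\ref{eq:polarlets:3d:space} --- the Hankel transform in $\R^2$, the Rayleigh formula in $\R^3$ --- with the radial weight $1/(\hat r)^{|\nu|}$ turning $h_l$ into $h_l^{(|\nu|)}$, and collects the $\dd x^{j_1}\wedge\cdots$ components. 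Matching the resulting coordinate functions against the displayed formulas is then a finite check, carried out one pair $(\fdeg,\nu)$ at a time and separately for $n=2$ and $n=3$.
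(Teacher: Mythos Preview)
Your proposal is correct and follows essentially the same architecture as the paper: pass to the Fourier domain via the Plancherel theorem for forms, use the Hodge--Helmholtz decomposition in the spherical frame to isolate the relevant coordinate functions, and reduce the tight-frame property to the scalar polar-wavelet result applied componentwise; the explicit spatial formulas are likewise obtained in both by expanding the spherical form basis vectors in Cartesian ones and inverting via Jacobi--Anger (in $\R^2$) and Rayleigh (in $\R^3$).

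Two places where your route diverges from the paper's are worth noting. For the co-exact frames in $\dot L_2^1$, you invoke the unitary rescaling $\hat f\mapsto|\xi|\hat f$ from $L_2(|\xi|^2\,\dd\xi)$ onto $L_2(\dd\xi)$ to reduce directly to the scalar admissibility; the paper instead observes that the Gramians satisfy $G_{qp}^{\fdeg,\delta,a}=\langle\!\langle\psi_{q,a}^{\fdeg,\delta},\psi_{p,a}^{\fdeg,\delta}\rangle\!\rangle_{\dot L_2^1}=\langle\!\langle\psi_{q,a}^{\fdeg+1,\dd},\psi_{p,a}^{\fdeg+1,\dd}\rangle\!\rangle_{L_2}=G_{qp}^{\fdeg+1,\dd,a}$ (from $\dd\psi^{\fdeg,\delta}=\psi^{\fdeg+1,\dd}$ and the definition of the $\dot L_2^1$ inner product) and then transfers the already-established exact-form frame bounds via the Gramian characterization --- the paper explicitly remarks that your direct reduction would also work. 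For real-valuedness, you argue structurally via the conjugation--reflection symmetry $\widehat\psi(-\xi)=\overline{\widehat\psi(\xi)}$, whereas the paper verifies it by explicit parity bookkeeping: in $\R^2$ through the interplay of $\sigma\in\{-1,1\}$ with the Jacobi--Anger factor $i^{m+\sigma}$, and in $\R^3$ by tracking the even/odd $l$-parity of the spherical-harmonic coefficients $(\hat\phi\cdot\hat r)_{lm}$, $(\hat\theta\cdot\hat r)_{lm}$ through the product coefficients so that $i^l$ from Rayleigh is consistently real or consistently imaginary. Your symmetry argument is cleaner conceptually; the paper's explicit check has the advantage of being read off directly from the displayed closed forms.
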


The proof is relegated to Sec.~\ref{sec:psiforms:proofs}.
For exact forms in $\Omega_{\dd}^r$, i.e. $\vert \nu \vert = 0$, we thus have that the $\psi_{s,a}^{\fdeg,\nu,n}$ form a tight frame for $L_2(\Omega_{\dd}^{\fdeg},\R^n) \cong \dot{L}_2^0(\Omega_{\dd}^{\fdeg},\R^n)$ and for co-exact ones in $\Omega_{\delta}^r$, i.e. $\vert \nu \vert = 1$, they are a tight frame for the homogeneous Sobolev space $\dot{L}_2^1(\Omega_{\delta}^{\fdeg},\R^n)$, see Fig.~\ref{fig:psiec_r2} and Fig.~\ref{fig:psiec_r3}.
The theorem ensures that the polar differential form wavelets $\psi_{s,a}^{\fdeg,\nu,n}$ can represent the differential forms of interest in applications in $\R^2$ and $\R^3$.
Plots of some $\psi_{s,a}^{\fdeg,\nu,n}$ can be found Fig.~\ref{fig:psi_delta_22} and Fig.~\ref{fig:psis:3d}.
As in the scalar case, the radial window $h_{l}^{q}( \cdot )$ has a closed form expression, for example, when one uses in the Fourier domain the window $\hat{h}(\vert {\xi} \vert)$ that has been proposed for the steerable pyramid~\cite{Portilla2000}.
\commentCL{The same does hold also, e.g., with the box function window, i.e. for radial sinc wavelets.}

\begin{remark}
  \label{rem:choice_of_function_spaces}
  There are alternatives to the function spaces we used in Theorem~\ref{thm:psiforms:frames}.
  For example, starting with $\dot{L}_2^{n/2}(\Omega_{\delta}^0,\R^n)$ for functions and losing one degree of regularity with every exterior derivative, i.e. working with $\dot{L}_2^{n/2-\fdeg}(\Omega_{\nu}^{\fdeg},\R^n)$, one obtains a more symmetric construction.
  When one then also works with the dual spaces $\dot{L}_2^{-n/2+\fdeg}(\Omega_{\delta}^{\fdeg},\R^n)$, one has at least partial closure under the Hodge dual.
  Our choice is motivated by the classical spaces $H(\mathrm{curl},\R^n)$ and $H(\mathrm{div},\R^n)$, cf. Remark~\ref{remark:hcurl}, but it is an interesting direction for future work to more systematically explore alternatives.
\end{remark}

The following proposition establishes that the $\psi_{s,a}^{\fdeg,\nu,n}$ satisfy important properties of Cartan's exterior calculus and hence constitute a discretization of it.

\begin{theorem}
  \label{thm:psiforms:exterior}
  The polar differential $\fdeg$-forms wavelet $\psi_{s,a}^{\fdeg,\nu,n} \in \dot{L}_2^{\vert \nu \vert}(\Omega_{\nu}^r,\R^n)$ satisfy:
  \begin{enumerate}[i)]

    \item Exterior derivative:
      \begin{align*}
        \dd \psi_{s,a}^{\fdeg,\dd,n}(x) &= 0
        \\[5pt]
        \dd \psi_{s,a}^{\fdeg,\delta,n}(x) &= \psi_{s,a}^{\fdeg+1,\dd,n}(x)
      \end{align*}
      and furthermore
      \begin{align*}
        \Big\Vert \, \psi_{s,a}^{\fdeg,\delta,n} \, \Big\Vert_{\dot{L}_2^1}
        = \Big\Vert \, \psi_{s,a}^{\fdeg+1,\dd,n} \, \Big\Vert_{L_2}
      \end{align*}

     \item Codifferential:
      \begin{align*}
        \delta \psi_{s,a}^{\fdeg,\delta,n}(x) = 0
      \end{align*}

    \item Hodge dual:
      \begin{align*}
        \star \, \hat{\psi}_{s}^{\fdeg,\nu,2}(\xi) &= \mathrm{sgn}(\sigma) \, \vert \xi \vert^{2 \vert \nu \vert-1} \, \hat{\psi}_{s}^{n-\fdeg,\bar{\nu},2}(\xi)
        \\[7pt]
        \star \, \hat{\psi}_{s,a}^{\fdeg,\nu,3}(\xi) &= (-1)^{\lfloor (r+1)/2 \rfloor} \, \vert \xi \vert^{2\vert \nu \vert-1} \, \hat{\psi}_{s,\bar{a}}^{n-\fdeg,\bar{\nu},3}(\xi)
      \end{align*}
      where $\sigma$ is the permutation associated with the form basis functions of the wavelet and $\bar{\delta} = \dd$ and $\bar{\dd} = \delta$.

    \item Rigid body transformations:
      \begin{align*}
          \alpha = \sum_{a} \sum_{j_s,k_s,t_s}^{J} \alpha_{s,a} \, \psi_{s,a}^{\fdeg,\nu,n} \ \ \Leftrightarrow \ \ A \alpha = \sum_{a} \sum_{j_s,k_s,t_s}^{J} \tilde{\alpha}_{s,a} \, \psi_{s,a}^{\fdeg,\nu,n} ,
      \end{align*}
      where $A \in \mathrm{E}(n)$ is a rigid body transformation and $\alpha \in \dot{L}_2^{\vert \nu \vert}(\Omega_{\nu}^{\fdeg},\R^n)$, i.e. the translation and/or rotation of a $J$-bandlimited representation in polar differential form wavelets can be represented in the same frame with the same bandlimit for some coefficients $\tilde{\alpha}_s$,
  \end{enumerate}
\end{theorem}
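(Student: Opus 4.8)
The plan is to carry out all four parts in the Fourier domain, where by Definition~\ref{def:psiforms} (see Fig.~\ref{fig:psiec_r2} and Fig.~\ref{fig:psiec_r3}) every $\hat\psi_{s,a}^{r,\nu,n}$ is a product of a scalar polar wavelet window---$\hat h(2^{-j_s}\vert\xi\vert)$, resp.\ $\hat h(2^{-j_s}\vert\xi\vert)/\vert\xi\vert$ in the co-exact case, matching the modified Hankel window $h_l^{(\vert\nu\vert)}$---with a multivector assembled from the orthonormal spherical frame $\{\partial/\partial\hat\theta^i,\partial/\partial\hat r\}$: an exact wavelet carries a purely tangential multivector, a co-exact one the extra factor $\partial/\partial\hat r$, exactly as dictated by the frequency form of the Hodge--Helmholtz decomposition, Theorem~\ref{thm:hodge_helmholtz}. \textbf{(i) and (ii).} By Proposition~\ref{prop:exterior_derivative:fourier} the symbol of $\dd$ is $\ip_{i\xi}$, and in spherical coordinates $\xi=\vert\xi\vert\,\dd\hat r$, so $\ip_{i\xi}$ kills any multivector with no $\partial/\partial\hat r$ factor and, on a multivector of the form $(\cdots)\wedge\partial/\partial\hat r$, removes the radial slot while producing a factor $i\vert\xi\vert$. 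Applied to the tangential $\hat\psi_{s,a}^{r,\dd,n}$ this gives $0$; applied to $\hat\psi_{s,a}^{r,\delta,n}$ the factor $i\vert\xi\vert$ cancels the $\vert\xi\vert^{-1}$ in the radial window and, reading off the remaining angular coefficients from Fig.~\ref{fig:psiec_r2}/\ref{fig:psiec_r3}, the result is identified with $\hat\psi_{s,a}^{r+1,\dd,n}$. The norm identity then follows directly from Definition~\ref{def:homo_sobolev:forms}: $\Vert\psi_{s,a}^{r,\delta,n}\Vert_{\dot L_2^1}^2=\llangle\dd\psi_{s,a}^{r,\delta,n},\dd\psi_{s,a}^{r,\delta,n}\rrangle_{L_2}=\Vert\psi_{s,a}^{r+1,\dd,n}\Vert_{L_2}^2$. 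For (ii) I would use the identity $\widehat{\delta\alpha}=(-1)^{n-1}\star\,\ip_{i\xi}\star\widehat\alpha$ recorded after Proposition~\ref{prop:hodge_dual:fourier} together with part (iii): $\star\hat\psi_{s,a}^{r,\delta,n}$ is, up to the scalar factor $\vert\xi\vert$, a $\dd$-type wavelet and hence tangential, so $\ip_{i\xi}$ annihilates it and $\delta\psi_{s,a}^{r,\delta,n}=0$.

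\textbf{(iii).} Here the work is a sign-careful coordinate calculation. Proposition~\ref{prop:hodge_dual:fourier} already reduces $\mathfrak F(\star\alpha)$ to the ordinary frequency-space Hodge star $\hat\star$ applied to the Fourier transform, so it suffices to evaluate $\hat\star$ on the spherical multivectors appearing in Definition~\ref{def:psiforms}: on a purely tangential multivector $\hat\star$ reinserts $\partial/\partial\hat r$ (exact $\mapsto$ co-exact), on one containing $\partial/\partial\hat r$ it removes it (co-exact $\mapsto$ exact), which is the flip $\nu\leftrightarrow\bar\nu$; the discrepancy between the exact window $\hat h(\vert\xi\vert)$ and the co-exact window $\hat h(\vert\xi\vert)/\vert\xi\vert$ accounts for the factor $\vert\xi\vert^{2\vert\nu\vert-1}$; the permutation sign in Proposition~\ref{prop:hodge_dual:fourier} combined with $\star\star=(-1)^{r(n-r)}$ produces $\mathrm{sgn}(\sigma)$ for $n=2$ and $(-1)^{\lfloor(r+1)/2\rfloor}$ for $n=3$; and the index $a\mapsto\bar a$ because $\hat\star$ interchanges the two tangential basis multivectors on $\hat S^2$. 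I would carry this out degree by degree, using the tables in Fig.~\ref{fig:psiec_r2}/\ref{fig:psiec_r3} as a lookup.

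\textbf{(iv).} The strategy is to identify the closed span $V_J:=\overline{\mathrm{span}}\{\psi_{s,a}^{r,\nu,n}\mid j_s\le J\}$ with a band-limited subspace and then note that this subspace is manifestly $\mathrm{E}(n)$-invariant. Restricting the tight frame of Theorem~\ref{thm:psiforms:frames} to levels $j_s\le J$, the induced frame operator acts on each Cartesian coordinate function as multiplication by the purely radial quantity $m_J(\xi)=\vert\hat g(\xi)\vert^2+\sum_{j=0}^J\vert\hat h(2^{-j}\vert\xi\vert)\vert^2$ (the angular data and the index $a$ are untouched, since $\Psi\mathrm{ec}$ mirrors the scalar Parseval identity of Proposition~\ref{prop:polarlet:2d:parseval}/\ref{prop:tight_frame:3d}), so $V_J$ is the set of forms in $\dot L_2^{\vert\nu\vert}(\Omega_\nu^r,\R^n)$ whose coordinate functions have Fourier support in the radial set $\Sigma_J=\mathrm{supp}(m_J)\subseteq B(0,2^Jc)$. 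A translation multiplies every coordinate function by the unimodular phase $e^{-i\langle v,\xi\rangle}$, preserving supports; a rotation $A\in\mathrm{E}(n)$ rotates the frequency variable---hence preserves the radial set $\Sigma_J$---and simultaneously mixes the Cartesian $\dd x^i$ among themselves, which keeps the form in $\Omega_\nu^r$ because rigid motions commute with $\dd$ and $\star$ and therefore with the Hodge--Helmholtz splitting of Theorem~\ref{thm:hodge_helmholtz}. Hence $A\alpha\in V_J$, and re-expanding it in the frame of Theorem~\ref{thm:psiforms:frames} yields the coefficients $\tilde\alpha_{s,a}$ supported on $j_s\le J$.

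\textbf{Main obstacle.} Parts (i) and (ii) are essentially bookkeeping once the frequency picture is in place, and (iii) is routine but error-prone in the signs and powers of $\vert\xi\vert$. The step requiring genuine care is the span characterization underlying (iv): one must verify that restricting the tight form frame to levels $\le J$ still gives a (now generally non-tight) frame for a clean band-limited form space---equivalently, that $m_J$ has no interior zeros producing ``holes'' in $\Sigma_J$---and that the index set $A_{n,r,\nu}$ is rich enough that rotating the form-basis multivectors never leaves $V_J$.
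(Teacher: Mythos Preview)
Your proposal is correct and, for parts (i) and (iii), matches the paper's proof essentially line for line: the paper also works in the Fourier domain, uses $\hat\dd=\ip_{i\xi}$ with $\xi=\vert\xi\vert\,\dd\hat r$ to read off the exterior-derivative identities, derives the norm identity directly from Definition~\ref{def:homo_sobolev:forms}, and disposes of (iii) as a direct coordinate check.

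Two small differences are worth flagging. For (ii) the paper gives a one-line conceptual argument rather than your computational one: the $\psi_{s,a}^{r,\delta,n}$ are co-exact forms in the sense of the continuous theory, i.e.\ they lie in the image of $\delta$, and hence $\delta\psi_{s,a}^{r,\delta,n}=0$ simply because $\delta^2=0$. Your route through $\widehat{\delta\alpha}=(-1)^{n-1}\star\ip_{i\xi}\star\widehat\alpha$ and part (iii) is also valid but longer. For (iv) the paper does not carry out the band-limited span characterisation you propose; it just cites the known closure of scalar polar wavelets under rigid body motions~\cite{Azencott2009} and observes that the spherical form-basis vectors used in Definition~\ref{def:psiforms} are compatible with that result. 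Your argument is a self-contained substitute for that citation; the ``obstacle'' you flag (no interior zeros of $m_J$, richness of $A_{n,r,\nu}$) is handled implicitly in the paper by the Calder\'on condition on the radial windows and by the fact that the two tangential indices $a\in\{1,2\}$ span $T\hat S^2$ and are therefore permuted, not enlarged, under rotation.
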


\begin{figure}
  \centering
  \includegraphics[trim={60 50 40 40},clip,width=\textwidth]{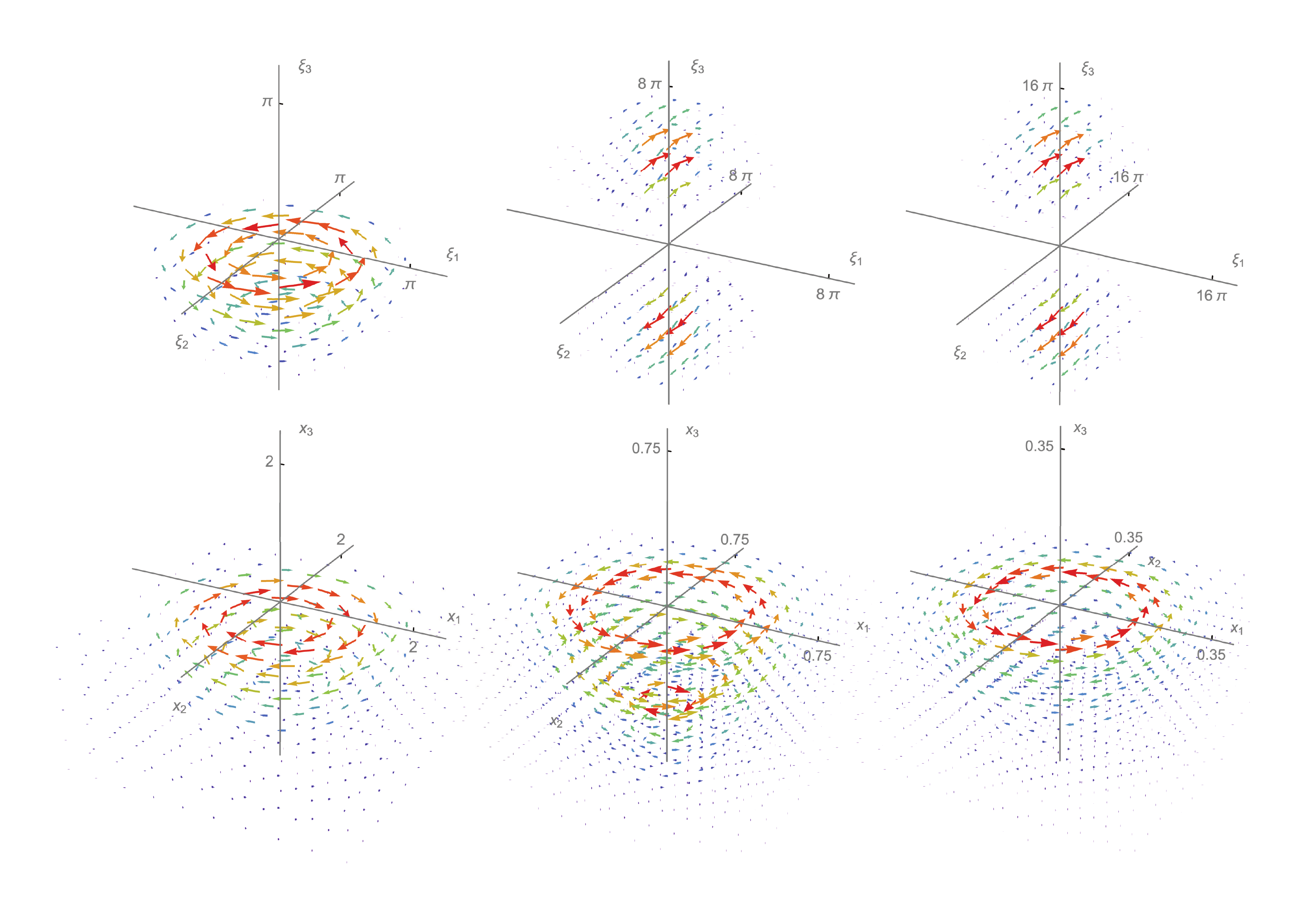}
  \caption{Exact differential form wavelets $\psi_{s,2}^{2,\dd,3}$ in $\R^3$ in the frequency domain (top) and the spatial one (bottom) visualized as vector fields. The left function is isotropic around the $x_3$-axis and the others are directional modeling high frequency features aligned with the $x_1$-$x_2$ plane.}
  \label{fig:psis:3d}
\end{figure}

The proof of the theorem is again relegated to Sec.~\ref{sec:fourier_forms}.
The theorem shows that polar differential form wavelets are closed under the exterior derivative and in the sense of the continuous theory since the $\psi_{s,a}^{\fdeg,\nu,n}$ are continuous differential forms.
They furthermore respect the Hodge-Helmholtz decomposition, since one has separate representations for exact and co-exact forms.
By linearity these properties carry over to arbitrary forms represented in the wavelets $\psi_{s,a}^{\fdeg,\nu,n}$.
This makes them useful for numerical calculations.

\begin{remark} Remarks on Theoren~\ref{thm:psiforms:exterior}:

  \begin{enumerate}[i)]

    \item For the evaluation of the exterior derivative in the Fourier domain, i.e. $\hat{\dd} = \ip_{i \xi}$ the Leibniz rule has to be used and this leads to additional sign changes.

    \item The imaginary unit in the definition of the differential $1$-form wavelets $\hat{\psi}_{s}^{1,\nu,2}$ in the Fourier domain in Fig.~\ref{fig:psiec_r2}, which appears a priori because of the exterior derivative, is also required for the wavelets to be real-valued in the spatial domain, cf. the proof of the theorem.

    \item For the Hodge dual one does not have closure. This arises from the different functions spaces that are used for exact and co-exact forms, which, however, are required for the closure of the exterior derivative.
        One remedy would be to use an alternative functional analytic setting and also work with the dual spaces, cf. Remark~\ref{rem:choice_of_function_spaces}.

    \item For isotropic basis functions, i.e. when $\beta_{m} = \delta_{m0}$, the $1$-form wavelets in $\R^2$ are of the form of an ideal source and an ideal divergence free vortex, i.e. for the mother wavelets one has
        \begin{align*}
          \psi^{1,\dd} = h_1(\vert x \vert) \, \dd r
          \quad \quad \quad \quad
          \psi^{1,\delta} = h_1(\vert x \vert) \, \dd \theta ,
        \end{align*}
        see left most column of Fig.~\ref{fig:psi_delta_22}.
        where $\dd \theta$ and $\dd r$ are the canonical form basis functions in polar coordinates in space.
        Analogously, in $\R^3$ one obtains in the isotropic case an axis-aligned source and vortex tube , cf. Fig.~\ref{fig:psis:3d}.

    \item The form basis vector $\partial / \partial \hat{\theta}$ and $\partial / \partial \hat{\phi}$ have singularities at the poles.
      In~\cite{Lessig2018z} it has been proposed to use a tight, redundant frame to span $T S^2$ and a simple construction of such a frame was proposed there. 
      While possible, some care is required in our context because in exterior calculus it is usually assumed that the tangent space is spanned by a (biorthogonal) basis.
      We leave it to future work to either re-write the necessary parts of exterior calculus using a redundant frame or find suitable coordinate charts for $S^2$ that work well with polar wavelets and avoid the singularities.
      With compactly supported directional localization windows $\hat{\gamma}(\bar{\xi})$ one can choose $\{ \partial / \partial \hat{\theta}, \partial / \partial \hat{\phi} \}$ such that the singularities are outside the support.

  \end{enumerate}
\end{remark}

\subsection{Exterior Calculus using Differential Forms Wavelets}

We continue with a discussion of the differential form wavelet interpretation of important results in the exterior calculus.

\subsubsection{Stokes theorem}
\label{sec:psiforms:stokes}

We showed in Sec.~\ref{sec:exterior_algebra:fourier} that Stokes' theorem can be written as
\begin{align*}
   \int_{\R^n} \alpha \wedge \chi_{_{\partial \M}} = \int_{\R^n} \dd \alpha \wedge \chi_{_{\M}} .
\end{align*}
where $\chi_{_{\partial \M}} \in \mathcal{S}'(\Omega^{n-\fdeg+1},\R^n)$ and $\chi_{_{\M}} \in \mathcal{S}'(\Omega^{n-\fdeg},\R^n)$ are the characteristic differential forms of $\partial \M$ and $\M$, respectively.
The differential form wavelet representation of $\alpha \in \mathcal{S}(\Omega^{\fdeg-1},\R^n)$ and $\dd \alpha \in \mathcal{S}(\Omega^{\fdeg},\R^n)$, which are smooth, compactly supported extensions of the original forms on $\partial \M$ and $\M$ along the normal, are given by
\begin{align*}
  \alpha = \sum_{s} \alpha_s \, \psi_{s}^{\fdeg-1,\delta}
  \quad \quad \quad \quad
  \dd \alpha = \sum_{s} \alpha_s \, \psi_{s}^{\fdeg,\dd}
\end{align*}
where we immediately used that $\dd^2 = 0$ and $\partial \partial \M = \varnothing$ so that it suffices to consider the co-exact part of $\alpha$.
Note also that the coefficients $\alpha_s$ on both sides of the equation are identical.
With the above representations and using linearity, Stokes' theorem can be written as
\begin{align}
  \label{eq:stokes:diff_forms:1}
  \sum_{s} \alpha_s \int_{\R^n} \psi_{s}^{\fdeg-1,\delta} \wedge \chi_{_{\partial \M}}
  &= \sum_{s} \alpha_s \int_{\R^n} \psi_{s}^{\fdeg,\dd} \wedge \chi_{_{\M}} .
  \intertext{By interpreting the integrals as the projection of $\chi_{_{\partial \M}}$ and $\chi_{_{\M}}$ onto the wavelets we obtain}
  \label{eq:stokes:diff_forms:2}
  \sum_{s} \alpha_s \, \tilde{\chi}_s^{_{\partial \M}}
  &= \sum_{s} \alpha_s \, \tilde{\chi}_s^{_{\M}}
\end{align}
where we use the tilde to distinguish $\tilde{\chi}_s^{_{\partial \M}}$ and $\tilde{\chi}_s^{_{\M}}$ from the frame coefficients obtained with the $L_2$-inner product for differential forms with the Hodge dual.
It follows immediately from Eq.~\ref{eq:stokes:diff_forms:2} that the coefficients $\tilde{\chi}_s^{_{\partial \M}}$ and $\tilde{\chi}_s^{_{\partial \M}}$ are in fact equal, i.e. $\tilde{\chi}_s^{_{\partial \M}} = \tilde{\chi}_s^{_{\M}}$.

When $\psi_{s}^{\fdeg,\dd}$ is a volume form, i.e. $\fdeg = n$, then the integral on the right hand side of Eq.~\ref{eq:stokes:diff_forms:1} describes the frame coefficient of the scalar characteristic function $\chi_{_{\M}}$ with the volume form wavelet $\psi_{s}^{n,\dd} = \psi_s(x) \, \dd x$, cf. Theorem~\ref{thm:psiforms:exterior}, v.), i.e we can write
\begin{align*}
  \tilde{\chi}_s^{_{\M}} = \int_{\R^n} \psi_{s}^{\fdeg,\dd} \, \chi_{_{\M}} \, \dd x .
\end{align*}
The behavior of these coefficients $\tilde{\chi}_s^{_{\M}}$ has been studied extensively in the literature on ridgelets (e.g.~\cite{Candes1999a,Donoho2000a}), curvelets (e.g.~\cite{Candes1999b,Candes2004,Candes2005a,Candes2005b,Candes2006}), shearlets (e.g.~\cite{Guo2006,Kutyniok2012a,Kutyniok2016}), contourlets (e.g.~\cite{Do2003,Do2005a}) and related constructions such as $\alpha$-molecules (e.g.~\cite{Grohs2013,Grohs2016}).
From these results it is known that, for sufficiently fine levels $j_s$, the coefficients $\tilde{\chi}_s^{_{\M}}$ are non-negligible only when $\psi_{s}^{r,\dd}$ is in the neighborhood of the boundary $\partial \M$ and, in the anisotropic, curvelet-like case, when it is oriented along $\partial \M$.
By Eq.~\ref{eq:stokes:diff_forms:2}, the ``geometry'' coefficients $\tilde{\chi}_s^{_{\M}} = \tilde{\chi}_s^{_{\partial \M}}$ thus select the ``signal'' coefficients $\alpha_s$ (of $\alpha$ and hence also $\dd \alpha$) in the vicinity of $\partial \M$ so that $\alpha_s$ will provide a significant contribution to the term only when $\tilde{\chi}_s^{_{\M}}$ is non-negligible.
This selection implements the pullback along the inclusion map $i : \partial \M \to \M$ in the original form of Stokes' theorem in Eq.~\ref{eq:stokes_thm}.
The above argument also shows that anisotropic wavelets are more efficient for realizing the boundary integral numerically than isotropic ones since the coefficients $\tilde{\chi}_s^{_{\M}}$ are then sparser and the sums in Eq.~\ref{eq:stokes:diff_forms:2} contain fewer non-negligible terms.
In particular, when $\partial \M$ is $C^2$ then curvelets, shearlets, and contourlets yield quasi-optimally sparse representations of $\chi_{_{\M}}$.

We already discussed in Example~\ref{ex:stokes:characteristic:circle} that $\chi_{_{\partial \M}} = \delta_{\partial \M}(x) \, n_i \, \dd \tilde{x}^i$ is just the wavefront set $\mathrm{WF}(\chi_{_{\M}})$ of $\chi_{_{\M}}$.
The integral on the left hand side of Eq.~\ref{eq:stokes:diff_forms:1} is thus a projection of $\mathrm{WF}(\chi_{_{\M}})$ onto the $\psi_{s}^{r-1,\delta}$.
That curvelet-like wavelets can represent, or resolve, the wavefront set has been established previously in the literature~\cite{Candes2005a,Kutyniok2008}.
The formulation of Stokes' theorem using differential form wavelets provides, in our opinion, an interesting perspective on this work~\cite{Candes2005a,Kutyniok2008}.

Eq.~\ref{eq:stokes:diff_forms:2} is valid for any $\fdeg$ and not just the case $r=n$ for which the existing results for curvelets, shearlets and related constructions apply.
To our knowledge, there are no results on the approximation power of ``vector-valued'' wavelets or curvelets.
We conjecture that also in the general case one has non-negligible coefficients $\tilde{\chi}_s^{_{\partial \M}}$ and $\tilde{\chi}_s^{_{\M}}$ only in the vicinity of $\partial \M$ and with an orientation aligned with it.
We leave a thorough investigation of this question to future work.

\begin{example}
  \label{ex:kelvins_circulation_theorem}
  We consider Kelvin's circulation theorem in fluid mechanics, e.g.~\cite[Ch. 1.2]{Chorin1993} or~\cite[Ch. 1]{Khesin1998}, that relates the fluid velocity and vorticity using Stokes' theorem.
  With differential forms it can be stated as
  \begin{align*}
    \int_{\partial \Sigma} u^{\flat} = \int_{\Sigma} \zeta
  \end{align*}
  where $u^{\flat}$ is the $1$-form associated with the fluid velocity field $\vec{u} \in \mathfrak{X}(U)$, $\zeta = \dd u^{\flat} \in \Omega^2(U)$ is the vorticity, and $\Sigma$ is an area embedded in the flow domain $U \subset \R^2$.
  Representing $u^{\flat}$ using the $1$-form frame,
  \begin{align*}
    u^{\flat} = \sum_{s} u_{s}^{\flat} \, \psi_{s}^{1,\delta} ,
  \end{align*}
  the representation for the associated vorticity is
  \begin{align*}
    \zeta = \dd u^{\flat} = \sum_{s} u_{s}^{\flat} \, \psi_{s}^{2,\dd}
  \end{align*}
  and both hence have the same expansion coefficients $u_{s}^{\flat}$.
  With Eq.~\ref{eq:stokes:diff_forms:2}, Kelvin's circulation theorem thus becomes
  \begin{align}
    \label{eq:ex:kelvins_circulation_theorem:5}
    \sum_{s} u_{s}^{\flat} \int_{\R^2} \psi_{s}^{1,\delta} \wedge \chi_{_{\partial \Sigma}}
    = \sum_{s} u_s^{\flat} \int_{\R^2} \psi_{s}^{2,\dd} \wedge \chi_{_{\Sigma}} .
  \end{align}
  As in Example~\ref{ex:stokes:characteristic:circle}, the characteristic function associated with $\partial \Sigma$ is the weak gradient
  \begin{align*}
    \chi_{_{\partial \Sigma}} = \dd \chi_{_{\Sigma}} = \delta_{\partial \Sigma}(x) \, n_i \, \dd x^i
  \end{align*}
  where the $n_i$ are the components of the normal of $\partial \Sigma$ and $\delta_{\partial \Sigma}(x)$ is its Dirac-distribution.
   In components we hence have for the integrand on the left hand side of Eq.~\ref{eq:ex:kelvins_circulation_theorem:5},
  \begin{align*}
     \psi_{s}^{1,\delta} \wedge \chi_{_{\partial \Sigma}}
     &= \left( \psi_{s}^{1,\delta} \dd x^1 + \psi_{s}^{1,\delta} \dd x^2 \right) \wedge \left( \delta_{\partial \Sigma}(x) \, n_1 \, \dd x^1 + \delta_{\partial \Sigma}(x) \, n_2 \, \dd x^2 \right)
     \\[8pt]
     &= \left( \psi_{s}^{1,\delta} \, n_2 - \psi_{s}^{1,\delta} \, n_1 \right) \, \delta_{\partial \Sigma}(x) \ \dd x^1 \wedge \dd x^2 .
  \end{align*}
  Since, as na{\"i}ve vectors, $(a_1,a_2)$ and $(a_2, -a_1)$ are orthogonal, the above wedge product vanishes when $\psi_{s}^{1,\delta}$ is in the normal direction and it is maximized when it is parallel to the boundary (and no metric is required).
  Hence, as expected, the integral over $\R^2$ implements the line integral along $\partial \Sigma$.

  The integral on the right hand side of Eq.~\ref{eq:ex:kelvins_circulation_theorem:5} is the representation problem for  the cartoon-like function $\chi_{_{\Sigma}}$, which, as discussed above, has been studied extensively in the literature.
  That a co-exact $1$-form wavelet attains the same approximation behavior for the boundary $\partial \Sigma$ is consistent with the results in~\cite{Lessig2018z} where it was shown that polar divergence free wavelets in $\R^2$, which correspond to our exact $1$-forms, attain the same approximation rates as scalar curvelets.
\end{example}

\subsubsection{Fiber Integration of Differential Forms}
\label{sec:psiform:slicing}

\begin{figure}
  \centering
  \includegraphics[trim={20 75 40 30},clip,width=1.0\columnwidth]{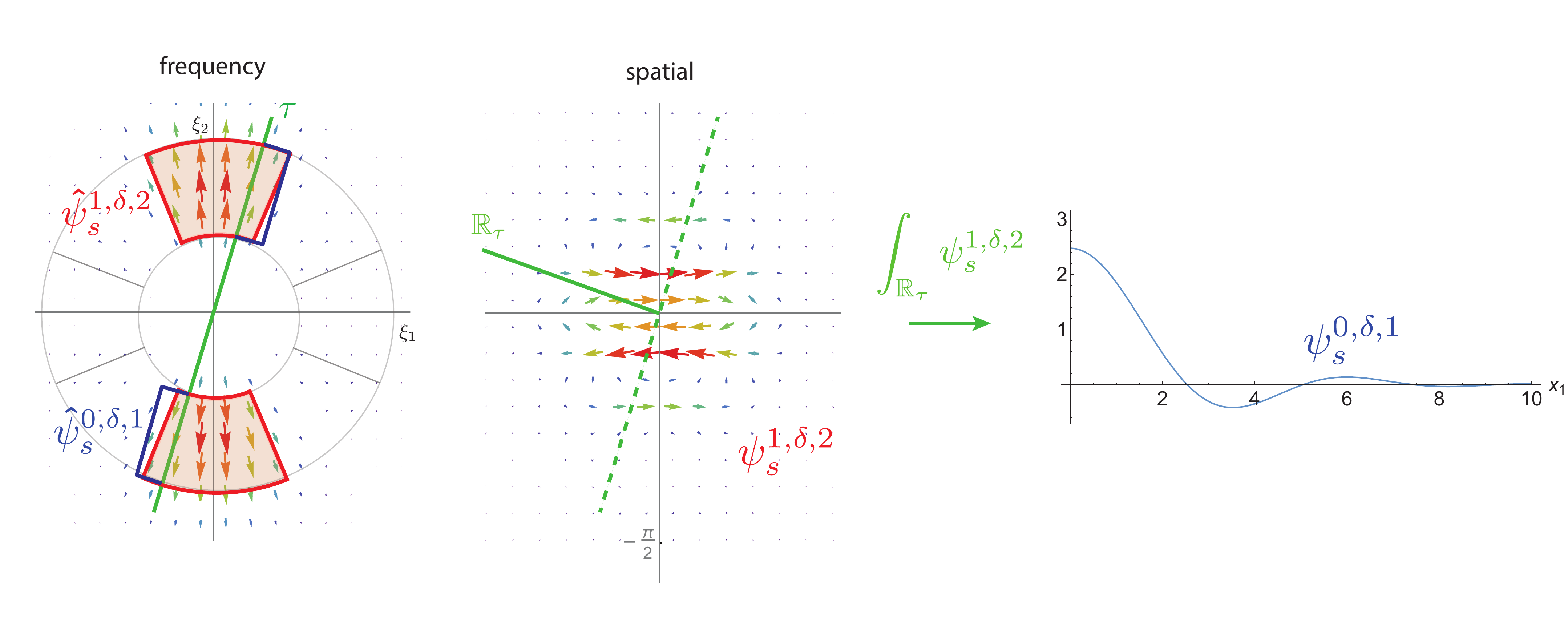}
  \caption{Conceptual depiction of fiber integration of differential form wavelets for a co-exact $1$-form wavelet $\psi_{s}^{1,\delta,2}$.
  \emph{Left:} In the Fourier domain the partial integration corresponds to the restriction to the plane $\tau$ (a line in $2D$) normal to the integration direction $\R_{\tau}$.
  \emph{Middle and right:} In the spatial domain, the projection of $\psi_{s}^{1,\delta,2}({x})$ along the $\R_{\tau}$ axis (middle) yields its restricted counter-part $\psi_{s}^{0,\delta,1}(x_1)$ (right), which is a differential form wavelet in one dimension (cf. Remark~\ref{rem:def:psiforms}).}
  \label{fig:zeta}
\end{figure}

A natural property of differential forms is that the integral of an $\fdeg$-form over a $p$-dimensional sub-manifold yields an $(\fdeg-p)$-form.
In the literature this is sometimes referred to as fiber integration~\cite{Bott1982}.
Our differential form wavelets verify this property for linear sub-manifolds and we will discuss it for the special case $p=1$.
Thus, we consider the integration along a direction $\R_{\tau}$,
\begin{align*}
  \psi_{s}^{\fdeg,\nu} \ \xrightarrow{\quad \int_{\R_{\tau}} \quad} \ \psi_{s'}^{\fdeg-1,\nu}
\end{align*}
for which the integral of an $\fdeg$-form  wavelet is an $(\fdeg-1)$-form one.
Furthermore, $\psi_{s'}^{\fdeg-1,\nu}$, which can be considered either in $\R^n$ or $\R^{n-1}$, has a closed form expression and $s$ and $s'$ are closely related, as we will discuss in the following.
See Fig.~\ref{fig:zeta} for a depiction for $n=2$.

In the Fourier domain, the integral along $\R_{\tau}$ is equivalent to restricting $\hat{\psi}_{s,a}^{\fdeg,\nu,n}$ to the plane $P_{\tau}$ with normal $\tau$, analogous to the classical intertwining between restriction and projection in the classical Fourier slice theorem~\cite{Bracewell1956}.
In the case of volume forms, this was already used in~\cite{Lessig2018d} for a local Fourier slice theorem based on polar wavelets.
The result carries over to arbitrary polar differential form wavelets since these are constructed using differential form basis functions in spherical coordinates.

We will demonstrate this in $\R^3$ for $\psi_{s}^{2,\delta} \in \dot{L}_2^1(\Omega_{\delta}^2,\R^3)$.
With Def.~\ref{def:psiforms}, the Fourier representation of $\psi_{s}^{2,\delta,3}$, for notational simplicity restricted to the mother wavelet, is
\begin{align*}
  \hat{\psi}_{s}^{2,\delta,3}(\xi)
  = -\frac{i}{\vert \xi \vert} \hat{\psi}_s(\xi) \, \frac{\partial}{\partial \hat{r}}
  = -\frac{i}{\vert \xi \vert} \frac{2^{3/2}}{(2\pi)^{3/2}} \sum_{lm} \kappa_{lm} \, y_{lm}(\hat{\theta},\hat{\phi}) \, \hat{h}(\vert \xi \vert) \, \frac{\partial}{\partial \hat{r}}
\end{align*}
Without loss of generality, let $\tau$ be along the $x_3$-axis; by the closure of the spherical harmonics bands under rotation, cf. Sec.~\ref{sec:preliminaries:sh}, the general case follows by applying a rotation in the spherical harmonics domain using the Wigner-D matrices.
The integration along the $x_3$-axis becomes in the Fourier domain the restriction to the $\xi_1$-$\xi_2$ plane and in spherical coordinates this corresponds to $\theta = \pi / 2$.
Thus,
\begin{align*}
  \left. \hat{\psi}_{s}^{2,\delta,3}(\xi) \right\vert_{\xi_{1,2}}
  = -\frac{2^{3/2} \, i}{(2\pi)^{3/2} \vert \xi_{1,2} \vert} \sum_{lm} \kappa_{lm} \, y_{lm}(\pi/2, \hat{\phi}) \, \hat{h}(\vert \xi_{1,2} \vert) \, \frac{\partial}{\partial \hat{r}_{{1,2}}}
\end{align*}
where $\vert \xi_{1,2} \vert$ and $\partial / \partial \hat{r}_{{1,2}}$ are the restriction of $\vert \xi \vert$ and $\partial / \partial \hat{r}$ to the $\xi_1$-$\xi_2$ plane, respectively.
Note that these are the natural radial variables in the frequency plane $\widehat{\R}^2$.
Expanding the spherical harmonics $y_{lm}$, cf.~\ref{sec:preliminaries:sh}, we obtain
\begin{align*}
  \left. \hat{\psi}_{s}^{2,\delta,3}(\xi) \right\vert_{\xi_{1,2}}
  &= -\frac{2^{3/2} \, i}{(2\pi)^{3/2} \vert \xi_{1,2} \vert} \sum_{lm} \kappa_{lm} \left( C_{lm} \, P_l^m(0) \, e^{i m \hat{\phi}} \right) \hat{h}(\vert \xi_{1,2} \vert) \, \frac{\partial}{\partial \hat{r}_{{1,2}}}
  \\[6pt]
  &= -\frac{2^{3/2} \, i}{(2\pi)^{3/2} \vert \xi_{1,2} \vert} \sum_{m} \underbrace{\left( \sum_l \kappa_{lm} \, C_{lm} \, P_l^m(0) \right)}_{\displaystyle \bar{\beta}_m} e^{i m \hat{\phi}} \, \hat{h}(\vert \xi_{1,2} \vert) \, \frac{\partial}{\partial \hat{r}_{{1,2}}}
  \\[4pt]
  &= -\frac{2^{3/2} \, i}{(2\pi)^{3/2} \vert \xi_{1,2} \vert} \sum_{m} \bar{\beta}_m \, e^{i m \hat{\phi}} \, \hat{h}(\vert \xi_{1,2} \vert) \, \frac{\partial}{\partial \hat{r}_{{1,2}}} .
\end{align*}
Comparing to the definition of $\hat{\psi}_{s}^{1,\delta,2} \in \dot{L}_2^1(\Omega_{\delta}^1,\R^2)$ in Def.~\ref{def:psiforms} and using those of the polar wavelets in Eq.~\ref{eq:polarlets:2d:hat}, we see that the restriction $\hat{\psi}_{s}^{2,\delta,3}(\xi) \vert_{\xi_{1,2}}$ is the frequency representation of a co-exact $1$-form wavelet in the plane $\R_{1,2}^2$ with polar frequency coordinates $(\hat{\phi},\vert \xi_{1,2} \vert)$.
The angular localization coefficients $\bar{\beta}_m$ obtained through the restriction differ from the $\beta_m$ which are commonly used when one starts in $\R^2$.
From the restriction it is, however, clear that $\hat{\psi}_{s}^{2,\dd,3}(\xi) \vert_{\xi_{1,2}}$ will be non-negligible only when the angular window $\hat{\psi}_s(\xi)$ is approximately centered on the $\xi_1$-$\xi_2$ plane and that the projected window, characterized by the $\bar{\beta}_m$, will be localized there around its original location, cf. Fig.~\ref{fig:psis:3d}.
The localization is hence, in an appropriate sense, preserved under integration.
A precise, quantitative analysis of this property is left to future work.

\subsubsection{Laplace--de Rham operator}
\label{sec:psiforms:laplace}

The Laplace operator acting on differential forms is known as the Laplace--de Rham operator $\Delta : \Omega^{\fdeg}(\R^n) \mapsto \Omega^{\fdeg}(\R^n)$.
It is defined as~\cite[Def. 8.5.1]{Marsden2004}
\begin{align}
  \label{eq:laplace_derahm}
  \Delta = \dd \delta + \delta \dd = \dd \star \dd \star + \star \dd \star \dd
\end{align}
and, analogous to the usual Laplace operator, it plays a fundamental role in many applications, e.g.~\cite[Ch. 9]{Marsden2004}.
Since our differential form wavelets are closed under the exterior derivative and the Hodge dual has in the Fourier domain a well defined form, also the Laplace--de Rham operator can be computed in closed form.
For example, for a co-exact $2$-form wavelet in $\R^3$ we have
\begin{align*}
  \Delta \psi_{s}^{2,\delta}
  &= \dd \delta \psi_{s}^{2,\delta} + \delta \dd \psi_{s}^{2,\delta} .
\end{align*}
The first term vanish by Theorem~\ref{thm:psiforms:exterior}, ii).
For the second term we have, using Theorem~\ref{thm:psiforms:exterior} and Proposition~\ref{prop:hodge_dual:fourier}, that
\begin{align*}
   \star \, \dd \star \dd \, \psi_{s}^{2,\delta}
   &= \star \, \dd \star \psi_{s}^{3,\dd}
   \\
   &= \star \, \dd \mathfrak{F}^{-1} \big( \vert \xi \vert \, \hat{\psi}_{s}^{0,\delta} \big)
   \\
   &= \star \, \mathfrak{F}^{-1} \big( \vert \xi \vert \, \hat{\psi}_{s}^{1,\dd} \big)
   \\
   &= \mathfrak{F}^{-1} \big( \vert \xi \vert^2 \, \hat{\psi}_{s}^{2,\delta} \big) .
\end{align*}
Thus
\begin{align*}
  \Delta \psi_{s}^{2,\delta} = \mathfrak{F}^{-1} \big( \vert \xi \vert^2 \, \hat{\psi}_{s}^{2,\delta} \big)
\end{align*}
and the Laplace--de Rham operator is a differential form of the same type but with a $\vert \xi \vert^2$-weight in the radial direction.
This is also what one would expect from the scalar Laplace operator.
One can check that, up to a sign, this holds for all $\psi_{s}^{\fdeg,\nu,n}(x)$ and we hence have the following the proposition.

\begin{proposition}
  \label{prop:laplace_derahm:psi}
  The Laplace--de Rahm operator $\Delta = \dd \delta + \delta \dd$ of a polar differential form wavelet $\psi_{s,a}^{\fdeg,\nu,n}$ is
  \begin{align*}
    \Delta \psi_{s,a}^{\fdeg,\nu,n} = \mathfrak{F}^{-1}\big( (-1)^{\vert A_{n,r,\nu} \vert} \vert \xi \vert^2 \, \hat{\psi}_{s,a}^{\fdeg,\nu,n} \big) .
  \end{align*}
  Furthermore, $\Delta \psi_{s,a}^{\fdeg,\nu,n}$ is given by the expression for the differential forms wavelets in Theorem~\ref{thm:psiforms:frames} with the modified radial window, $h_l^{(\cdot + 2)}(r)$, obtained through the (spherical) Hankel transform with the additional $\vert \xi \vert^2$-weight.
\end{proposition}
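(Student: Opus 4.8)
The plan is to use Theorem~\ref{thm:psiforms:exterior} to collapse $\Delta = \dd\delta + \delta\dd$ to a single four-operator string, and then to read off, in the Fourier domain, the radial weight and sign that string produces. The two equalities $\Delta\psi_{s,a}^{\fdeg,\nu,n} = \mathfrak{F}^{-1}((-1)^{|A_{n,\fdeg,\nu}|}|\xi|^2\,\hat\psi_{s,a}^{\fdeg,\nu,n})$ and the closed form with the modified radial window are, in the end, a finite verification over the (few) admissible triples $(n,\fdeg,\nu)$ and values of $a$, so I would organise the argument so that the generic mechanism is made explicit once and only the signs are checked case by case.

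First I would dispose of one of the two summands by the parity of $\nu$. If $\nu=\delta$, then $\delta\psi_{s,a}^{\fdeg,\delta,n}=0$ by Theorem~\ref{thm:psiforms:exterior}~ii), so $\Delta\psi_{s,a}^{\fdeg,\delta,n}=\delta\dd\,\psi_{s,a}^{\fdeg,\delta,n}$; if $\nu=\dd$, then $\dd\psi_{s,a}^{\fdeg,\dd,n}=0$ by Theorem~\ref{thm:psiforms:exterior}~i), so $\Delta\psi_{s,a}^{\fdeg,\dd,n}=\dd\delta\,\psi_{s,a}^{\fdeg,\dd,n}$. Using $\delta$ in the form underlying Eq.~\ref{eq:laplace_derahm}, each surviving term is, up to the (degree-dependent) signs suppressed there, $\star\dd\star\dd$ or $\dd\star\dd\star$ applied to the wavelet.

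Second, I would evaluate this string by alternately invoking Theorem~\ref{thm:psiforms:exterior}~i) and~iii). By part~i), an application of $\dd$ sends a co-exact wavelet to the exact wavelet one degree higher, and since the Fourier symbol $\ip_{i\xi}$ of Proposition~\ref{prop:exterior_derivative:fourier} is $C^\infty$-linear in its form argument it leaves any accumulated radial factor $|\xi|^m$ untouched; likewise, by Proposition~\ref{prop:hodge_dual:fourier} the Fourier transform of a spatial $\star$ is, up to sign, the pointwise (tensorial) Hodge star of the transform, hence also commutes with multiplication by $|\xi|^m$, and by part~iii) it flips $\nu$ and multiplies the radial part by $|\xi|^{2|\nu|-1}$ with $\nu$ the type of the \emph{output} wavelet. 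Now $\star$ always sends an exact form to a co-exact one (since $\delta(\star\alpha)=\pm\star\dd(\star\star\alpha)=\pm\star\dd\alpha=0$ when $\dd\alpha=0$, using $\star\star=(-1)^{n(n-\fdeg)}$), and along either string $\star\dd\star\dd$ or $\dd\star\dd\star$ both occurrences of $\star$ act on an exact form; hence each contributes the factor $|\xi|^{2\cdot 1-1}=|\xi|^{+1}$, and the composite radial weight is exactly $|\xi|^2$ — this is the mechanism already visible in the worked computation for $\psi_s^{2,\delta,3}$ preceding the proposition. Collecting all sign contributions (those suppressed in the schematic Eq.~\ref{eq:laplace_derahm}, the two Hodge-star signs $\mathrm{sgn}(\sigma)$ resp.\ $(-1)^{\lfloor(\fdeg+1)/2\rfloor}$ from Theorem~\ref{thm:psiforms:exterior}~iii), and the alternating signs produced by the anti-derivation/Leibniz rule for $\ip_{i\xi}$ flagged in the remarks after that theorem) and checking them against the finite list of types — with the extra index $a$ only for $1$- and $2$-forms in $\R^3$ — shows that the net sign equals $(-1)^{|A_{n,\fdeg,\nu}|}$, which gives the first claim.

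Third, for the ``furthermore'' part I would note that in the Fourier representations of Theorem~\ref{thm:psiforms:frames} every $\hat\psi_{s,a}^{\fdeg,\nu,n}$ is, up to the spherical-harmonic angular factors, the $i^l$ phases and the translation phase, a radial window $\hat h(|\xi|)/|\xi|^{|\nu|}$ whose Rayleigh/(spherical) Hankel transform $h_l^{(|\nu|)}$ is what appears in space; multiplying by $|\xi|^2$ replaces this window by $|\xi|^2\,\hat h(|\xi|)/|\xi|^{|\nu|}$, whose inverse (spherical) Hankel transform is exactly the window $h_l^{(\cdot+2)}$ of the statement (the extra $|\xi|^2$ inside the integrand of the modified spherical Hankel transform), while the angular part and the dilation/translation structure are unchanged up to the overall dilation-dependent scalar $2^{2j_s}$ that the Laplacian produces on a dilated wavelet. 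Substituting this back into the formulas of Theorem~\ref{thm:psiforms:frames} yields the asserted closed form.

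I expect the only genuine work to be the sign bookkeeping in the second step: the Hodge-star signs depend on permutations and parities and differ between $n=2$ and $n=3$ and between the two values of $a$, and the interior product $\ip_{i\xi}$ contributes further term-by-term sign alternations, so some care is needed to see that everything telescopes into the single factor $(-1)^{|A_{n,\fdeg,\nu}|}$. The $|\xi|^2$ radial weight itself, and the identification of the modified radial window, are essentially forced once the exterior-derivative and Hodge-dual identities of Theorem~\ref{thm:psiforms:exterior} are in hand.
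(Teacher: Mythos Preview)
Your proposal is correct and follows essentially the same route as the paper: the paper works out the case $\psi_s^{2,\delta,3}$ explicitly just before the proposition, using exactly the alternation of Theorem~\ref{thm:psiforms:exterior}~i) and~iii) that you describe, and then states that ``one can check that, up to a sign, this holds for all $\psi_s^{\fdeg,\nu,n}$''; the modified-window part is justified in the paper by a single sentence pointing back to the inverse Fourier computation in the proof of Theorem~\ref{thm:psiforms:frames}, which is your third step. One small remark: in Theorem~\ref{thm:psiforms:exterior}~iii) the exponent $2|\nu|-1$ is literally indexed by the \emph{input} type, so your phrase ``with $\nu$ the type of the output wavelet'' is a notational slip; the substantive claim you need --- that each spatial $\star$ along the string hits an exact wavelet and contributes a factor $|\xi|^{+1}$ --- is exactly what the paper's worked example shows, so the argument stands.
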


The second part of the proposition follows from the fact that the closed form expressions in Theorem~\ref{thm:psiforms:frames} are computed using the (inverse) Fourier transform in spherical coordinates, see the proof of the theorem.

\begin{remark}
  \label{rem:laplace_derahm_galerkin}
  Proposition~\ref{prop:laplace_derahm:psi} shows that the symbol $\hat{\Delta}$ of the Laplace--de Rahm operator $\Delta = \dd \delta + \delta \dd$ is, up to a sign, $\vert \xi \vert^2$ and this holds not only for differential form wavelets but for differential forms in general.
\end{remark}

\begin{figure}
  \centering
  \includegraphics[width=0.8\textwidth]{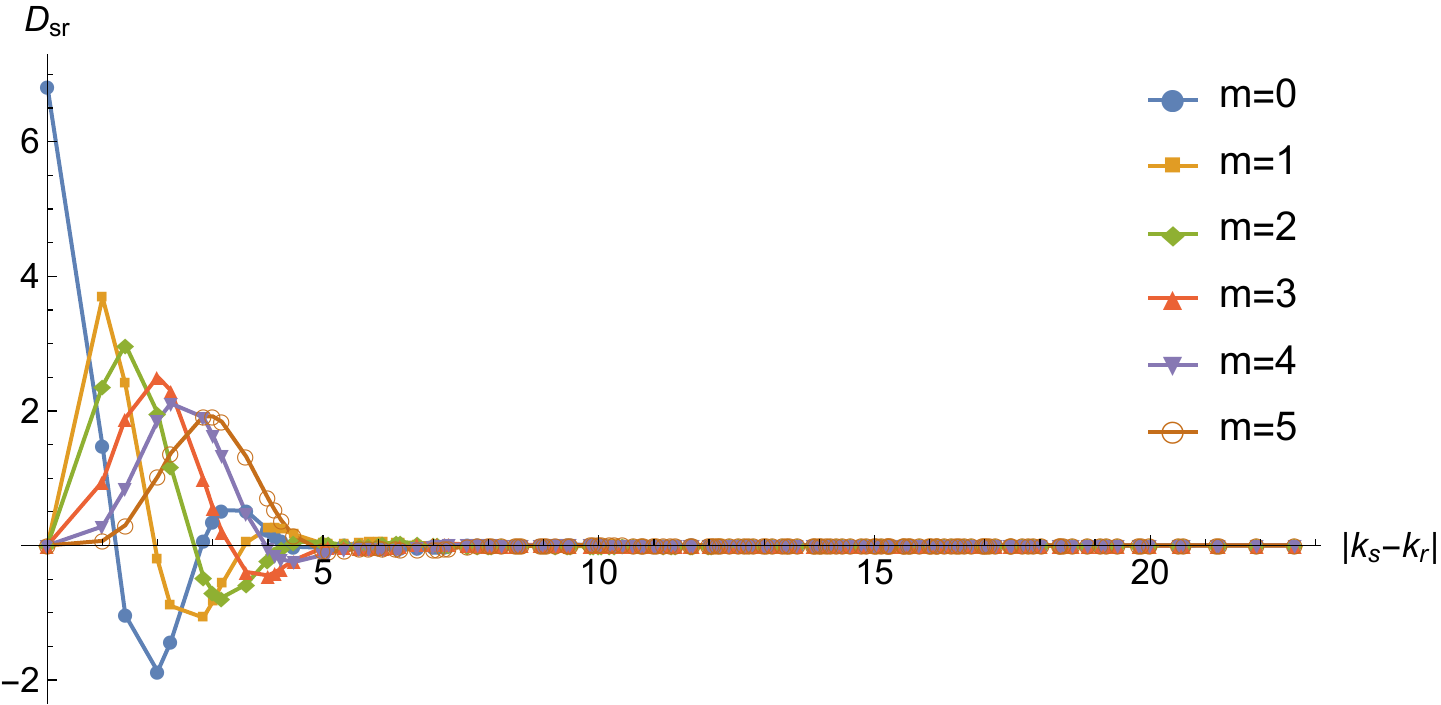}
  \caption{Decay of the Galerkin projection of the Laplace--de Rahm operator $D_{qp}$, as a function of the separation $\vert k_q - k_p \vert$ in $\R^2$. The decay for isotropic wavelets corresponds to $m=0$. For anisotropic wavelets one has a linear combination of the different $m$. Note that the decay does not depend on the wavelet type since our differential form wavelets use the same frequency window independent of the type of the form.}
  \label{fig:laplace_decay_spatial}
\end{figure}

Important for numerical calculations is the Galerkin projection,
\begin{align*}
  D_{q p } = \left\langle \! \left\langle \Delta \psi_{q,a}^{\fdeg,\nu,n} \, , \, \psi_{p,a}^{\fdeg,\nu,n} \right\rangle \! \right\rangle ,
\end{align*}
of the Laplace--de Rahm operator.
With the definition of our differential form wavelets in the Fourier domain and Proposition~\ref{prop:laplace_derahm:psi} it has a closed form solution.
Furthermore, by the compact support of our wavelets in the frequency domain the matrix elements $D_{q p }$ are nonzero only when $\min( -1, j_p -1 ) \leq j_q \leq j_p + 1$ and when the orientations align, i.e. $t_q \approx t_p$ (assuming compactly supported windows directional windows $\hat{\gamma}(\bar{\xi})$; otherwise nonzero has to be replaced by non-negligible).
The coefficients $D_{qp}$ as a function of the separation $\vert k_q - k_p \vert$ are shown in Fig.~\ref{fig:laplace_decay_spatial} where it can be seen that these decay fast as $\vert k_q - k_p \vert$ grows.
The discrete Laplace--de Rahm operator $D_{qp}$ is hence well localized in space and frequency by our construction.

\begin{example}
  \label{ex:laplace_de_rahm:cavity}
  One of the first applications where the importance of differential forms for numerical modeling became clear was electromagnetic theory~\cite{Bossavit1997,Hiptmair2002}.
  The resonant cavity, which we consider in this example, serves thereby as a reference problem~\cite{Bell2012}.
  It seeks the electromagnetic field in a simple domain consisting of a perfect conductor containing no enclosed charges.
  Maxwell's equations then reduce to an eigenvalue problem for which na{\"i}ve vector calculus discretizations fail to provide the correct answer~\cite{Bossavit1997,Hiptmair2002,Bell2012}.

  Let $\mathcal{B} \subset \R^2$ be the square domain with side length $\pi$.
  The resonant cavity problem then seeks the electric $1$-form field $E \in \Omega_{\delta}^1(\mathcal{B})$ such that
  \begin{subequations}
    \label{eq:cavity_problem}
  \begin{align}
    \Delta E = \delta \dd E &= \lambda E \quad \mathrm{in } \ \mathcal{B}
    \\
    i^* E &= 0 \quad \mathrm{on } \ \partial \mathcal{B}
  \end{align}
  \end{subequations}
  where the pullback $i^* E = 0$ describes that the component of $E$ tangential to the boundary vanishes and $E$ is co-exact since the electric field is divergence free in a region free of charges.

  To be able to apply the polar differential form wavelets constructed for $\R^2$ to the above cavity problem on $\mathcal{B}$, we consider $E$ as a differential form in $\R^2$, analogous to the treatment for Stokes' theorem in Sec.~\ref{sec:psiforms:stokes}.
  This means that the coordinate functions are multiplied by the characteristic function $\chi_{_{\mathcal{B}}}$ and, by abuse of notation, we will not distinguish $E$ and its embedding into $\R^2$.
  Galerkin projection of Eq.~\ref{eq:cavity_problem} using the co-exact $1$-form wavelets then yields
  \begin{align*}
    \Big\langle \! \! \Big\langle \psi_{p}^{1,\delta} \, , \, \sum_{q} E_q \, \Delta \, \psi_{q}^{1,\delta} \Big\rangle \! \! \Big\rangle
    &= \lambda \big\langle \! \big\langle \psi_{p}^{1,\delta} \, , \, E \big\rangle \! \big\rangle
    \\[1pt]
     \sum_{q} E_q  \, \big\langle \! \big\langle \psi_{q}^{1,\delta} \, , \, \Delta \, \psi_{p}^{1,\delta} \big\rangle \! \big\rangle
    &= \lambda \, E_p
    \\[1pt]
    \sum_{q} E_q  \, D_{q p}    &= \lambda \, E_p
  \end{align*}
  where $D_{qp}$ is the discrete Laplace--de Rahm operator from Remark~\ref{rem:laplace_derahm_galerkin}.
  By linearity of the pullback, the boundary condition becomes in the frame representation
  \begin{align*}
    \sum_{q} E_q \, i^* \psi_{q}^{1,\delta} &= 0 \quad \mathrm{on } \ \partial \mathcal{B} .
  \end{align*}
  It is hence enforced when the differential form wavelets $\psi_{s}^{1,\delta}$ satisfy $i^* \psi_{s}^{1,\delta} = 0$, i.e. when the pairing of $\psi_{s}^{1,\delta}$ with the normal vanishes.
  Fig.~\ref{fig:psi_delta_22}, right columns in the bottom row, shows that suitably constructed anisotropic polar differential $1$-form wavelets  $\psi_{s}^{1,\delta}$ satisfy this to good approximation.
  These can hence be used to model the boundary conditions.

  A numerical investigation of this example is, however, warranted.
  For example, in practice the boundary conditions do not hold perfectly with the above approach in the finite-dimensional, numerically practical case and there is always leakage of the electric field $E$, which was embedded in $\R^2$, into the exterior domain $\R^2 \setminus \mathcal{B}$.


\end{example}

\subsection{Other Operators on Differential Form Wavelets}

In the following we will briefly discuss other important operations on differential forms where we do not yet have an elegant realization in $\Psiec$.

\subsubsection{Wedge product}

As mentioned before, polar differential form wavelets $\psi_{s,a}^{\fdeg,\nu,n}$ are forms in the sense of the continuous theory.
Hence, the wedge product $\psi_{s,a}^{\fdeg,\nu,n}\, \wedge \, \psi_{s',a'}^{l,\nu',n}$ is well defined and a $(k+l)$-form.
For numerical calculations one would hope that the nonzero coefficients of the product are sparse, at known locations, and can be computed efficiently.
As shown in Proposition~\ref{prop:exterior:convolution}, the wedge product becomes a convolution in frequency space.
Hence, considerable sparsity is lost and precise conclusions about the number and location of nonzero coefficients are not easily established (although the compact support of the wavelets in the frequency domain does provide bounds).

In practice we currently use the closed form formulas for the spatial representations of the differential form wavelets in Theorem~\ref{thm:psiforms:exterior}.
With these, the multiplication of the coordinate functions that arises as part of the wedge product can be computed analytically.
Although the resulting expressions are rather complicated, they can be combined with numerical quadrature to implement the wedge product.
However, we currently have no insight into the sparsity of the coefficients of the product or their decay properties.
An alternative to the above approach would be to use the fast transform method~\cite{Eliasen1970,Orszag1970}, i.e. to evaluate the multiplication in the spatial domain.
For this, a fast transform is required, which is hence an important direction for future work.

\subsubsection{Lie derivative}

The Lie derivative $\mathcal{L}_{X} \alpha$ describes the infinitesimal transport of a differential form $\alpha$ along the vector field $X$.
It hence plays a fundamental role in the description of physical systems using exterior calculus.
Using Cartan's formula, the Lie derivative can be written as
\begin{align*}
  \mathcal{L}_{X} \alpha = \dd \ip_X \alpha + \ip_X \dd \alpha .
\end{align*}
While the exterior derivative is well defined for differential form wavelets, the interior product with an arbitrary vector field, which amounts to a multiplication of the coordinate functions, is not.
An efficient, general evaluation of the Lie derivative is in $\Psiec$ hence currently not available.
The next example demonstrates that, at least in certain cases, one can resort to the convenient analytic expressions of the differential form wavelets to evaluate the Lie derivative.

\begin{example}
  \label{ex:lie_advection:euler}
  The Euler fluid equation in vorticity form is given by~\cite[Ch. 1]{Khesin1998}
  \begin{align*}
    \frac{\partial \zeta}{\partial t} - \mathcal{L}_{\vec{u}} \, \zeta = 0
  \end{align*}
  where $\vec{u} \in \mathfrak{X}_{\mathrm{div}}(\R^n)$ is the divergence free velocity vector field and $\zeta = \dd u^{\flat}$ is the vorticity, cf. Example~\ref{ex:kelvins_circulation_theorem}.
  In $\R^2$, vorticity is a volume form, i.e. $\zeta \in \Omega_{\dd}^2(\R^2)$, and using Cartan's formula we hence have
  \begin{align*}
     \mathcal{L}_{\vec{u}} \, \zeta = \dd \, \ip_{\vec{u}} \, \zeta + \ip_{\vec{u}}\, \dd \, \zeta = \dd \, \ip_{\vec{u}} \zeta .
  \end{align*}
  In coordinates this equals
  \begin{align*}
    \mathcal{L}_{\vec{u}} \, \zeta
    &= -\left( \frac{\partial u_1}{\partial x^2} \zeta + u_1 \frac{\partial \zeta}{\partial x^2} + \frac{\partial u_2}{\partial x^1} \zeta + u_2 \frac{\partial \zeta}{\partial x^1} \right) \dd x^1 \wedge \dd x^2 .
  \end{align*}
  Representing $\vec{u}$ with the divergence free wavelets from~\cite{Lessig2018z}, which we denote by $\vec{\psi}_s$,
  and $\zeta$ with the $2$-form wavelets $\psi_{s}^{2,\dd}$ we obtain
  \begin{align*}
    \frac{\partial \zeta_q}{\partial t} = \sum_{s} \sum_r u_s \, \zeta_r \left\langle  \mathcal{L}_{\vec{\psi}_s} \psi_{r}^{2,\dd} \ , \ \psi_{q}^{2,\dd} \right\rangle .
  \end{align*}
  The advection or interaction coefficients (cf.~\cite{Platzman1960})
  \begin{align*}
    C_{sr}^q = \left\langle  \mathcal{L}_{\vec{\psi}_s} \psi_{r}^{2,\dd} \ , \ \psi_{q}^{2,\dd} \right\rangle
  \end{align*}
  can then be computed with a closed form expression
  , see the supplementary material, and implementing the reprojection onto $\psi_{q}^{2,\dd}$ using a quadrature rule, cf.~\cite{Lessig2018z}.
\end{example}

\subsubsection{Pullback}

The pullback $\eta^* \alpha \in \Omega^r(\mathcal{N})$ of a differential form $\alpha \in \Omega^r(\mathcal{M})$ by a map $\eta : \mathcal{N} \to \M$ between two manifolds $\mathcal{N},\M$ is a another important operation in the exterior calculus.
For linear maps, such as rotation or shear, closed form solutions for the pullback of a differential form wavelet can be derived in the Fourier domain~\cite{Azencott2009}, cf. Theorem~\ref{thm:psiforms:exterior}, iv.).
For general diffeomorphisms a result by Cand{\`e}s and Demanet~\cite[Thm. 5.3]{Candes2005} shows that curvelet-like frames are essentially preserved.
It would be interesting to extend this result into a numerically practical form.
A special case that is of particular relevance are volume preserving diffeomorphisms, which, e.g., describe the time evolution of inviscid fluids.
Then the map $\eta$ can be associated with a unitary operator~\cite{Koopman1931} and Galerkin projection can be used to obtain the representation of $\eta$ in the wavelet domain.

\subsection{Proofs}
\label{sec:psiforms:proofs}

\begin{proof}[Proof of Theorem~\ref{thm:psiforms:frames}]

  We begin with the frame property.
  The cases of $0$- and $n$-forms are equivalent to the existing results in the literature~\cite{Unser2011,Unser2013,Ward2014}; for $\R^3$ we also provide an alternative, more direct proof in~\ref{sec:polarlets:admissibility:3d}.
  In $\R^2$, for $1$-form one can use the argument from~\cite[Proposition~1]{Lessig2018z}, i.e. a short calculation reduces the problem to the known scalar case.
  This argument carries over to exact forms in $\R^3$, for which we detail the calculations for $L_2(\Omega_{\dd}^2,\R^3)$ below.
  For co-exact forms, which span the homogeneous Sobolev space $\dot{L}_{2}^1(\Omega_{\delta}^r,\R^n)$, one can then use the Hodge-Helmholtz decomposition, as we will also explain.

  \paragraph{Exact forms in $\R^3$}
  \begin{subequations}
  Let $\alpha \in L_2(\Omega_{\dd}^2,\R^3)$ be an exact $2$-form.
  By Theorem~\ref{thm:hodge_helmholtz}, its Fourier transform is given by
  \begin{align}
    \label{eq:proof:thm:psiforms:frames:1}
    \widehat{\alpha} = \widehat{\alpha}_{\theta} \frac{\partial}{\partial \hat{\theta}} + \widehat{\alpha}_{\phi} \frac{\partial}{\partial \hat{\phi}}
  \end{align}
  We want to show that
  \begin{align}
    \label{eq:proof:thm:psiforms:frames:2}
    \alpha = \sum_{s \in \mathcal{S}} \left\langle \! \! \left\langle \alpha \, , \, \psi_{s,1}^{2,\dd} \right\rangle \! \! \right\rangle \psi_{s,1}^{2,\dd} + \sum_{s \in \mathcal{S}} \left\langle \! \! \left\langle \alpha \, , \, \psi_{s,2}^{2,\dd} \right\rangle \! \! \right\rangle \psi_{s,2}^{2,\dd} .
  \end{align}
  Taking the Fourier transform and using Parseval's theorem for differential forms we obtain
  \begin{align}
    \label{eq:proof:thm:psiforms:frames:3}
    \widehat{\alpha}
    &= \sum_{s \in \mathcal{S}} \left\langle \! \! \left\langle \widehat{\alpha} \, , \, \hat{\psi}_{s,1}^{2,\dd} \right\rangle \! \! \right\rangle \hat{\psi}_{s,1}^{2,\dd} + \sum_{s \in \mathcal{S}} \left\langle \! \! \left\langle \widehat{\alpha} \, , \, \hat{\psi}_{s,2}^{2,\dd} \right\rangle \! \! \right\rangle \hat{\psi}_{s,2}^{2,\dd} .
  \end{align}
  Considering for the moment only the first term and expanding it yields
  \begin{align*}
    \sum_{s \in \mathcal{S}} \left\langle \! \! \left\langle \widehat{\alpha} \, , \, \hat{\psi}_{s,1}^{2,\dd} \right\rangle \! \! \right\rangle \hat{\psi}_{s,1}^{2,\dd}
    = \sum_{s \in \mathcal{S}} \left\langle \! \! \! \left\langle \widehat{\alpha}_{\theta} \frac{\partial}{\partial \hat{\theta}} + \widehat{\alpha}_{\phi} \frac{\partial}{\partial \hat{\phi}} \, , \, \hat{\psi}_{s}^*(\xi) \, \frac{\partial}{\partial \hat{\theta}}  \right\rangle \! \! \! \right\rangle \hat{\psi}_{s,1}^{2,\dd} .
    \intertext{Writing out the inner product with the Hodge dual we obtain}
    \sum_{s \in \mathcal{S}} \left\langle \! \! \left\langle \widehat{\alpha} \, , \, \hat{\psi}_{s,1}^{2,\dd} \right\rangle \! \! \right\rangle \hat{\psi}_{s,1}^{2,\dd}
    = \sum_{s \in \mathcal{S}} \int_{\widehat{\R}^3} \left( \widehat{\alpha}_{\theta} \frac{\partial}{\partial \hat{\theta}} + \widehat{\alpha}_{\phi} \frac{\partial}{\partial \hat{\phi}} \right) \! \wedge \! \left( \hat{\psi}_{s}^*(\xi) \, \frac{\partial}{\partial \hat{\phi}} \wedge  \frac{\partial}{\partial \hat{r}} \right) \hat{\psi}_{s,1}^{2,\dd} .
    \nonumber
    \end{align*}
    The integral is nonzero only for the first term of $\alpha$, since only then one obtains a volume form.
    Hence
    \begin{align*}
      \sum_{s \in \mathcal{S}} \left\langle \! \! \left\langle \widehat{\alpha} \, , \, \hat{\psi}_{s,1}^{2,\dd} \right\rangle \! \! \right\rangle \hat{\psi}_{s,1}^{2,\dd}
    &= \sum_{s \in \mathcal{S}} \int_{\widehat{\R}^3} \! \! \left( \widehat{\alpha}_{\theta}(\xi) \, \hat{\psi}_{s}^*(\xi) \, \frac{\partial}{\partial \hat{\theta}} \wedge \frac{\partial}{\partial \hat{\phi}} \wedge  \frac{\partial}{\partial \hat{r}} \right) \hat{\psi}_{s,1}^{2,\dd} .
  \end{align*}
  The integral is now the $L_2$-inner product for scalar functions.
  Also expanding $\hat{\psi}_{s,1}^{2,\dd}$ on the right hand side of the last equation we can write
  \begin{align}
    \label{eq:proof:thm:psiforms:frames:7}
    \sum_{s \in \mathcal{S}} \left\langle \! \! \left\langle \widehat{\alpha} \, , \, \hat{\psi}_{s,1}^{2,\dd} \right\rangle \! \! \right\rangle \hat{\psi}_{s,1}^{2,\dd}
    &= \underbrace{\sum_{s \in \mathcal{S}} \left\langle  \widehat{\alpha}_{\theta}(\xi) \, , \, \hat{\psi}_{s}(\xi) \right\rangle \hat{\psi}_{s}(\xi)}_{\textrm{scalar polar wavelet expansion}} \, \frac{\partial}{\partial \hat{\theta}} .
  \end{align}
  The term with the brace is equivalent to the case of $0$-forms, i.e. the known scalar result applies for the representation of the coordinate function and hence the frame property holds for the first part of $\hat{\alpha}$ in Eq.~\ref{eq:proof:thm:psiforms:frames:1}.
  Analogously, one obtains for the second term in Eq.~\ref{eq:proof:thm:psiforms:frames:3} that
  \begin{align*}
  \sum_{s \in \mathcal{S}} \left\langle \! \! \left\langle \widehat{\alpha} \, , \, \hat{\psi}_{s,1}^{2,\dd} \right\rangle \! \! \right\rangle \hat{\psi}_{s,1}^{2,\dd}
    &= \sum_{s \in \mathcal{S}} \int_{\widehat{\R}^3} \left( i \, \widehat{\alpha}_{\phi}(\xi) \, \hat{\psi}_{s}(\xi) \, \frac{\partial}{\partial \hat{\phi}} \wedge \frac{\partial}{\partial \hat{\theta}} \wedge  \frac{\partial}{\partial \hat{r}} \right) \hat{\psi}_{s,1}^{2,\dd}
    \\[7pt]
    &= \underbrace{\sum_{s \in \mathcal{S}} \left\langle  \widehat{\alpha}_{\phi}(\xi) \, , \, \hat{\psi}_{s}(\xi) \right\rangle \hat{\psi}_{s}(\xi)}_{\textrm{scalar polar wavelet expansion}} \, \frac{\partial}{\partial \hat{\phi}} .
  \end{align*}
  \end{subequations}

  \paragraph{Co-exact forms in $\R^3$}
  The case of co-exact forms can be handled analogously to the above one by reducing it to a known result for the scalar case.
  \commentCL{
    The result to be used is
    \begin{align}
      \hat{f}(\xi)
      &= \sum_{s} \langle \hat{f}(\eta) , \hat{\psi}_s(\eta) \rangle \hat{\psi}_s(\xi)
      \\[4pt]
      &=  \langle \hat{f}(\eta) , \sum_{s} \hat{\psi}_s(\eta) \hat{\psi}_s(\xi) \rangle .
    \end{align}
    Since it is known that the equality has to hold, e.g.~\cite{Unser2013}, this implies that, in an appropriate sense,
    \begin{align}
      \delta(\xi - \eta) = \sum_{s} \hat{\psi}_s(\eta) \hat{\psi}_s(\xi) .
    \end{align}
    The case of co-exact forms can, using manipulations analogous to those for exact ones, be reduced to a point there this can be used and this cancels the second $\vert \xi \vert$ in the definition of the homogenous Sobolev space.
  }
  We present here an alternative proof where we exploit the correspondence between co-exact $\fdeg$-forms and exact $(\fdeg+1)$ forms that is established by the Hodge-Helmholtz decomposition.
  Let $\Psi_{}^{\fdeg,\nu,a}$ be the frame operator associated with the $\psi_{s,a}^{\fdeg,\nu}$ for fixed $\nu$, $\fdeg$, $a$.
  The Gramian is $G^{\fdeg,\nu,a} = \Psi^{\fdeg,\nu,a} (\Psi^{\fdeg,\nu,a})^*$ and its entries are
  \begin{align*}
    G_{q p}^{\fdeg,\nu,a}
    = \left\langle \! \left\langle \psi_{q,a}^{\fdeg,\nu,n} \, , \, \psi_{p,a}^{\fdeg,\nu,n} \right\rangle \! \right\rangle_{\dot{L}_2^{\vert \nu \vert}} .
  \end{align*}
  Using the definition of the $\dot{L}_2^1$ inner product for differential forms and that $\dd \psi_{q,a}^{r,\delta,n} = \psi_{q,a}^{r+1,\dd,n} $ we have for co-exact forms that
  \begin{align*}
    G_{qp}^{\fdeg,\delta,a} = \left\langle \! \left\langle \psi_{q,a}^{\fdeg,\delta,n} \, , \, \psi_{p,a}^{\fdeg,\delta,n} \right\rangle \! \right\rangle_{\dot{L}_2^1}
    = \left\langle \! \left\langle \psi_{q,a}^{\fdeg+1,\dd,n} \, , \, \psi_{p,a}^{\fdeg+1,\dd,n} \right\rangle \! \right\rangle_{L_2} = G_{qp}^{\fdeg+1,\dd,a} ,
  \end{align*}
  i.e. the Gramians for co-exact $\fdeg$-form frame functions and exact $(\fdeg+1)$-form ones are identical.
  The lower and upper frame bounds, $A$ and $B$, respectively, of a frame can be characterized using the Gramian as
  \begin{subequations}
  \begin{align*}
    A &= \mathrm{inf} \! \left\{ \left\langle a , G a \right\rangle_{\ell_2}, \ a \in \mathrm{ran}( \Psi ), \ \vert a \vert = 1 \right\}
    \\[5pt]
    B &= \mathrm{sup} \! \left\{ \left\langle a , G a \right\rangle_{\ell_2}, \ a \in \mathrm{ran}( \Psi ), \ \vert a \vert = 1 \right\} .
  \end{align*}
  \end{subequations}
  Since the Gramians are identical and by the Hodge-Helmholtz decomposition $\dd ( \mathrm{ran}( \Psi^{\fdeg,\delta} )) = \mathrm{ran}( \Psi^{\fdeg+1,\dd} )$, the tightness of the frame for exact $(\fdeg+1)$-forms implies those for co-exact $\fdeg$-forms.

  \paragraph{Spatial representation}

    We will demonstrate the computation of the spatial representation of the differential form wavelets for one example in $\R^2$ and one in $\R^3$.
    An important aspect of the computations is the use of the Jacobi-Anger and Rayleigh formulas that describe the scalar complex exponential in polar and spherical coordinates, respectively, see \ref{sec:preliminaries:fourier:polar}.

    Without loss of generality, we consider the mother wavelet $\psi^{1,\dd}(x) \in L_2( \Omega_{\dd}^2,\R^2)$ whose definition in the Fourier domain is given by
    \begin{align*}
      \hat{\psi}^{1,\dd}(\xi)
      &= i \sum_m \beta_m \, e^{i m \hat{\theta}} \, \hat{h}(\vert \xi \vert) \frac{\partial}{\partial \hat{\theta}} .
      \intertext{Expanding the form basis functions in polar coordinates into the Cartesian ones we obtain}
      \hat{\psi}^{1,\dd}(\xi) &= i \sum_m \beta_m \, e^{i m \hat{\theta}} \, \hat{h}(\vert \xi \vert) \left(  \sin{\hat{\theta}} \dpp{}{\xi^1} - \cos{\hat{\theta}} \dpp{}{\xi^2} \right) .
    \end{align*}
    The trigonometric functions can be written as Fourier series.
    The Cartesian components $\hat{\psi}_1^{1,\dd}(\xi)$ and $\hat{\psi}_2^{1,\dd}(\xi)$ of $\hat{\psi}^{1,\dd}(\xi)$ then become
    \begin{align*}
      \hat{\psi}_1^{1,\dd}(\xi)
      &= -\frac{1}{2} \left( e^{-i \hat{\theta}} - e^{i \hat{\theta}} \right) \left( \sum_{n} \beta_n \, e^{i n \hat{\theta}} \right) \, \hat{h}(\vert \xi \vert) \, \dpp{}{\xi^1}
      \\
      &= -\frac{1}{2} \left( \sum_{n} \beta_n \, e^{i (n-1) \hat{\theta}} \, \hat{h}(\vert \xi \vert) - \sum_{n} \beta_n \, e^{i (n+1) \hat{\theta}} \, \hat{h}(\vert \xi \vert)  \right) \dpp{}{\xi^1} \nonumber
      \intertext{and}
      \hat{\psi}_2^{1,\dd}(\xi)
      &= -\frac{i}{2} \left( e^{-i \hat{\theta}} + e^{i \hat{\theta}} \right) \left( \sum_{n} \beta_n \, e^{i n \hat{\theta}} \right) \, \hat{h}(\vert \xi \vert) \, \dpp{}{\xi^2}
      \\
       &= -\frac{i}{2} \left( \sum_{n} \beta_n \, e^{i (n-1) \hat{\theta}} \, \hat{h}(\vert \xi \vert) + \sum_{n} \beta_n \, e^{i (n+1) \hat{\theta}} \, \hat{h}(\vert \xi \vert) \right) \dpp{}{\xi^2} \nonumber
    \end{align*}
    To obtain the inverse Fourier transform of the $1$-forms above we require those of each of the sums in the above formulas.
    Using the definition of the inverse Fourier transform for differential forms in Def.~\ref{def:diff_form:ft} we have, for example,
    \begin{align*}
      &\mathfrak{F}^{-1}\left( -\sum_{n} \beta_n \, e^{i (n-1) \hat{\theta}} \, \hat{h}(\vert \xi \vert) \, \dpp{}{\xi^1} \right)
      \\[5pt]
      &= \frac{1}{2\pi} \int_{\widehat{\R}^2} \left( \sum_{n} \beta_n \, e^{i (n-1) \hat{\theta}} \, \hat{h}(\vert \xi \vert) \, \dpp{}{\xi^1} \right) \wedge e^{i \langle x , \xi \rangle} \left( \dpp{}{\xi^2} \, \dd x^2 \right)
    \end{align*}
    where we immediately only introduced the term of the exponential $e^{\dpp{}{\xi^q} \dd x^q}$ that yields a volume form.
    Changing back to polar coordinates and using the Jacobi-Anger formula to write the scalar complex exponential in polar coordinates we have,
    \begin{align*}
      &\mathfrak{F}^{-1}\left( -\sum_{n} \beta_n \, e^{i (n-1) \hat{\theta}} \, \hat{h}(\vert \xi \vert) \, \dpp{}{\xi^1} \right)
      \\[5pt]
      &= \frac{1}{2\pi} \int_{\widehat{\R}^+} \int_{0}^{2\pi} \sum_{n} \beta_n \, e^{i (n-1) \hat{\theta}} \, \hat{h}(\vert \xi \vert) \, \sum_{m \in \Z} i^m e^{i m (\theta - \hat{\theta})} \, J_m(\vert \xi \vert \, \vert x \vert) \, \vert \xi \vert \dd \vert \xi \vert \, \dd\hat{\theta} \, \dd x^2
      \\[5pt]
      &= \frac{1}{2\pi} \sum_n \sum_{m \in \Z} i^m \, \beta_n \, e^{i m \theta} \int_{0}^{2\pi} \! e^{i (n-1) \hat{\theta}} \, e^{-i m \hat{\theta}} \, \dd\hat{\theta} \int_{\widehat{\R}^+} \! \hat{h}(\vert \xi \vert) \, J_m(\vert \xi \vert \, \vert x \vert) \, \vert \xi \vert \dd \vert \xi \vert \, \dd x^2
      \\[5pt]
      &= \sum_{n} i^{n-1} \, \beta_{n} \, e^{i (n-1) \theta} \, h_m(\vert x \vert) \, \dd x^2.
    \end{align*}
    Carrying this out for all terms and using the Fourier transform of the form basis functions in Eq.~\ref{eq:diff_form:ft:r2} we obtain
    \begin{align*}
      \psi_1^{1,\dd}(x) &= \frac{1}{2} \sum_{\sigma \in \{ -1, 1\} } \sum_{m} \sigma \, i^{m + \sigma} \beta_m \, e^{i (m + \sigma) \theta} \, h_{m + \sigma}(\vert x \vert) \, \dd x^2
      \\[5pt]
      \psi_2^{1,\dd}(x) &= \frac{i}{2} \sum_{\sigma \in \{ -1, 1\} } \sum_{m} i^{m + \sigma} \beta_m \, e^{i (m + \sigma) \theta} \, h_{m + \sigma}(\vert x \vert) \, \dd x^1
    \end{align*}
    Rewriting this as differential form $\psi^{1,\dd} = \psi_2^{1,\dd}(x) + \psi_1^{1,\dd}(x)$ yields the result in Theorem~\ref{thm:psiforms:exterior}.
    That the wavelets are real-valued follows from the interaction of $\sigma \in \{-1,1\}$ with the factor $i^{m+\sigma}$ that is introduced by the Jacobi-Anger formula.
    When $\beta_m = \delta_{m0}$ one has $\psi^{1,\dd} = h_1(\vert x \vert) (\cos{\theta} \dd x^1 + \sin{\theta} \dd x^2) = h_1(\vert x \vert) \, \dd r$, i.e. a radial differential $1$-form field in space.

  In $\R^3$, the calculations proceed analogously to $\R^2$ by first representing the spherical form basis functions in Cartesian ones, e.g. $\partial / \partial \hat{\phi} = \hat{\phi}^j \partial / \partial \xi^j$ with $\hat{\phi}^j = \hat{\phi}^j(\hat{\theta},\hat{\phi})$, and then using the results of Sec.~\ref{sec:fourier_forms}.
  We will present a detailed calculation for the $1$-form basis function $\psi_{s,2}^{1,\dd,3}$.

  The wavelet is in the frequency domain given by
  \begin{align*}
    \hat{\psi}_{s,2}^{1,\dd,3}
    = -\frac{1}{\vert \xi \vert} \hat{\psi}_s(\xi) \frac{\partial}{\partial \hat{\phi}} \wedge \frac{\partial}{\partial \hat{r}}
    = i \hat{\psi}_s(\xi) \left( \sum_{j_1 < j_2} \hat{\phi}^{j_1}  \frac{\partial}{\partial \xi^{j_1}} \wedge  \hat{r}^{j_2} \frac{\partial}{\partial \xi^{j_2}} \right)
  \end{align*}
  By linearity and with the representation of the scalar wavelet from Eq.~\ref{eq:polarlets:3d:hat} we obtain
  \begin{align*}
    \hat{\psi}_{s,2}^{1,\dd,3}
    = \frac{-2^{3j/2}}{(2\pi)^{3/2} \vert \xi \vert} \sum_{l,m} \kappa_{lm}^{j,t} \, y_{lm} \, \hat{h}(2^{-j} \vert {\xi} \vert) \, \left(  \sum_{j_1 < j_2} \hat{\phi}^{j_1} \hat{r}^{j_2} \frac{\partial}{\partial \xi^{j_1}} \wedge \frac{\partial}{\partial \xi^{j_2}} \right) .
  \end{align*}
  where, without loss of generality, we omitted the translation factor for notational convenience, i.e. we consider $s = (j,0,t)$.
  The two terms $\kappa_{lm}^{j,t} y_{lm}$ and $\hat{\phi}^{j_1} \hat{r}^{j_2}$ that depend on the angular variables $(\hat{\theta},\hat{\phi})$ can be combined using the representation of the product in the spherical harmonics domain, cf.~\ref{sec:preliminaries:sh}.
  This yields
  \begin{align*}
    \Big( -\sum_{l,m} \kappa_{lm}^{j,t} \, y_{lm} \Big) \! \cdot \! (\hat{\phi}^{j_1} \hat{r}^{j_2}) = \sum_{lm} \underbrace{\left( -\sum_{l_1,m_2} \sum_{l_2,m_2} \kappa_{l_1 m_1}^{j,t} \, (\hat{\phi}^{j_1} \hat{r}^{j_2})_{l_2 m_2} \, G_{l_1 m_1;l_2 m_2}^{l m} \right)}_{\displaystyle \big( \hat{\psi}_{s,2}^{1,\dd} \big)_{lm}^{j_1,j_2} } y_{lm}
  \end{align*}
  where the $G_{l_1 m_1;l_2 m_2}^{l m}$ are the spherical harmonics product coefficients and the negative sign from the definition of the wavelets is subsumed.
  We thus have
    \begin{align*}
    \hat{\psi}_{s,2}^{1,\dd,3}
    = \frac{2^{3j/2}}{(2\pi)^{3/2} \vert \xi \vert} \sum_{j_1 < j_2} \sum_{l,m} \big( \hat{\psi}_{s,2}^{1,\dd} \big)_{lm}^{j_1,j_2} \, y_{lm} \, \hat{h}(2^{-j} \vert {\xi} \vert) \, \frac{\partial}{\partial \xi^{j_1}} \wedge \frac{\partial}{\partial \xi^{j_2}}  .
  \end{align*}
  The spatial representation of the wavelet is given by the inverse Fourier transform in Def.~\ref{def:diff_form:ft}.
  When we also immediately use the Rayleigh formula to expand the scalar complex exponential in spherical coordinates we obtain
  \begin{align*}
    {\psi}_{s,2}^{1,\dd,3} &= \frac{-2^{3j/2}}{(2\pi)^{3}} \sum_{j_1 < j_2} \sum_{l,m} \big( \hat{\psi}_{s,2}^{1,\dd} \big)_{lm}^{j_1,j_2}  \int_{\R^n} \frac{1}{\vert \xi \vert} \, y_{lm}(\hat{\theta},\hat{\phi}) \, \hat{h}(2^{-j} \vert {\xi} \vert) \, \frac{\partial}{\partial \xi^{j_1}} \wedge \frac{\partial}{\partial \xi^{j_2}}
    \\[4pt]
    & \qquad \qquad \qquad \qquad \times \frac{1}{4\pi} \sum_{l'm'} i^{l'} y_{l'm'}^*(\hat{\theta},\hat{\phi}) \, y_{l'm'}({\theta},{\phi}) \, j_l(\vert \xi \vert \, \vert x \vert) \, \left( \frac{\partial}{\partial \xi^{j_3}} \, \dd x^3 \right)
  \end{align*}
  For each $(j_1,j_2)$ there is, by the anti-symmetry of the wedge product, only one $j_3$ so that one obtains a volume form, and the necessary reordering of the form basis functions introduces a factor $\mathrm{sgn}(\sigma_j)$ where $j = (j_1,j_2,j_3)$.
  In each case we can then change back to spherical coordinates and resolve the integrals over $(\hat{\theta},\hat{\phi})$, where the orthonormality of the spherical harmonics applies, and the radial direction $\vert \xi \vert$, where one obtains the modified spherical Hankel transform $h_l^{(q)}$ in Theorem~\ref{thm:psiforms:frames}.
  This yields the spatial form for the wavelets listed in the theorem.

  That the wavelets are real-valued follows from a careful analysis of the spherical harmonics coefficients $\big( \hat{\psi}_{s,a}^{1,\nu} \big)_{lm}^{j_1,j_2}$ and $\big( \hat{\psi}_{s,a}^{2,\nu} \big)_{lm}^{j_1}$.
  For instance, $(\hat{\phi} \! \cdot \hat{r})_{lm}$ in the above example is nonzero only for even $l$ and the same holds for the $\kappa_{l_1 m_1}^{j,t}$ by the symmetry of the directional localization window $\hat{\gamma}(\hat{\theta},\hat{\phi})$ that is required for a real-valued wavelet in the scalar case.
  The product coefficients then preserve the even parity so that the $i^l$ factor from the Rayleigh formula yields consistently $\pm 1$.
  \commentCL{In contrast, $(\hat{\theta} \! \cdot \hat{\phi})_{lm}$ and $(\hat{\theta} \! \cdot \hat{r})_{lm}$ are nonzero only for odd $l$, so that the $i^l$ is always imaginary, cancelling the imaginary unit in the definition of the basis functions. See \texttt{form\_wavelets\_inverse\_fourier.wls} for details.}
\end{proof}

\begin{proof}[Proof of Theorem~\ref{thm:psiforms:exterior}]
  We have:
  \begin{enumerate}[i)]

    \item The closure under the exterior derivative holds because $\hat{\dd} = \ip_{i \xi}$ only acts on the radial $\partial / \partial \hat{r}$ component.
        For instance,
        \begin{align*}
          \hat{\dd} \hat{\psi}_{s}^{0,\delta,3}
          &= \ip_{i \xi} \left( \frac{1}{\vert \xi \vert} \hat{\psi}_s(\xi) \frac{\partial}{\partial \hat{\theta}} \wedge \frac{\partial}{\partial \hat{\phi}} \wedge \frac{\partial}{\partial \hat{r}} \right)
          \\[5pt]
          &= \frac{i}{\vert \xi \vert} \hat{\psi}_s(\xi) \frac{\partial}{\partial \hat{\theta}} \wedge \frac{\partial}{\partial \hat{\phi}} \wedge \frac{\partial}{\partial \hat{r}} \Big( \vert \xi \vert \dd \hat{r} \Big)
          \\[5pt]
          &= i \hat{\psi}_s(\xi) \, \frac{\partial}{\partial \hat{\theta}} \wedge \frac{\partial}{\partial \hat{\phi}}
        \end{align*}
        where we used that in spherical coordinate $\xi = \vert \xi \vert \dd \hat{r}$ and that the basis vectors satisfy the duality condition $\partial / \partial \hat{r}( \dd \hat{r} ) = 1$.
      The other cases are analogous.
      That $\dd$ is a unitary operator on co-exact differential form basis functions follows immediately from the definition of the homogeneous Sobolev space $\dot{L}_2^1(\Omega_{\delta}^{\fdeg},\R^n)$.

    \item The closure of co-exact differential form wavelets $\psi_{s,a}^{\fdeg,\delta,n}$ under the co-differential holds since these are co-exact differential forms in the sense of the continuous theory.

    \item This can be checked by direct calculations.
      \commentCL{The Hodge dual of the \emph{normalized} basis functions in spherical coordinates are analogous to the Cartesian ones in $\R^3$ since the metric is also the identity in this case. This can also be checked by expanding the spherical form basis functions in Cartesian ones and then evaluating the Hodge dual by linearity (for $2$-form basis functions one has surprising simplifications of the products.)}

    \item This follows from the closure of scalar polar wavelets under rigid body transformations~\cite{Azencott2009} and the fact that we use frame functions in spherical coordinates that are compatible with the scalar window.

   \end{enumerate}
\end{proof}

\section{Conclusion}
\label{sec:conclusion}

We introduced $\Psiec$, a wavelet-based discretization of exterior calculus. 
Its central objects are polar differential $\fdeg$-form wavelets $\psi_{s,a}^{\fdeg,\nu,n}$ that provide tight frames for the spaces $\dot{L}_2^{\vert \nu \vert}(\Omega_{\nu}^{\fdeg}, \R^n)$ and that satisfy important properties of Cartan's exterior calculus, such as closure under the exterior derivative.
In contrast to existing discretizations, which typically construct a discrete analog of the de Rahm complex, the $\psi_{s,a}^{k,\nu,n}$ are bona fide differential forms in the sense of the continuous theory.
All operations on differential forms are hence naturally defined.
Finite closure and the computational efficiency of the discretization hence become the questions of interest.

The present work provides, in our opinion, a first step towards a more complete theory.
For example, except when $n=0$ or $\fdeg=n$ and the existing results for cartoon-like functions apply, we did not establish approximation rates for polar differential form wavelets.
Similarly, for the Laplace--de Rahm operator we are missing decay estimates for its Galerkin projection $D_{qp}$, which are, for example, necessary for error bounds for Example~\ref{ex:laplace_de_rahm:cavity}.
We also did not yet present numerical experiments. 
An implementation of the differential form wavelets is available, cf. Fig.~\ref{fig:psi_delta_22} and Fig.~\ref{fig:psis:3d}, and numerical results for example applications will be presented in a forthcoming publication. 
Since our wavelets have non-compact support in space, the efficiency of numerical calculations depends critically on the decay of the radial window.
It should hence be optimized, cf.~\cite{Ward2015}, or an ansatz should be developed that yields compactly supported wavelets.
Then also a fast transform would be available.

Our work can be extended in various directions. 
We defined our differential form wavelets only for $n=2,3$ but the construction carries over to $n > 3$. 
One then would require more generic proofs then the ones currently used by us that typically proceed explicitly with the individual cases.
However, with the tools introduced in Sec.~\ref{sec:exterior_calculus} this should be possible. 
It would also be interesting to extend the functional analytic setting of our work.
Troyanov~\cite{Troyanov2009} considers, for example, $L_p(\Omega^{\fdeg},\R^n)$ and spaces with variable regularity, for example in the sense of H{\"o}lder, which would be relevant for applications.

Various extensions of our treatment of Stokes' theorem are also possible. 
We only considered smooth manifolds but using the tools of geometric measure theory~\cite{Federer1996} it should be possible to establish a similar result for the non-smooth case.
Given that we are limited to an extrinsic treatment of manifolds, it would also be interesting to explore if our construction can be combined with those by Berry and Giannakis~\cite{Berry2018} who use the eigenfunctions of the Laplace--Beltrami operator to define a spectral exterior calculus on arbitrary manifolds.
A special case of particular relevance is the sphere $S^2$ where one still has an extensive analytic theory available and where geophysical fluid dynamics provides an important application.
The construction of $\Psi\mathrm{ec}$ on $S^2$ and numerical results on the simulation of the shallow water equation are presented in~\cite{Silva2020}.



\section*{Acknowledgements}

I would like to thank Mathieu Desbrun and Tudor Ratiu for helpful discussions.
The work also benefited from discussions at the Workshop on Mathematics of Planet Earth organized by Darryl Holm in June 2019.
The insightful suggestions by the anonymous reviewer are also gratefully acknowledged.
Continuing support by Eugene Fiume over many years also helped to make this work possible.

\bibliography{psiec}

\appendix
\section{Conventions}
\label{sec:preliminaries}

\subsection{Notation}
\label{sec:preliminaries:notation}

We denote the norm in $\R^n$ by $\vert \cdot \vert$ and those in a function space by $\Vert \cdot \Vert$. 
Unit vectors are written as $\bar{x}$, i.e. for $x \in \R^n$ we have $\bar{x} = x / \vert x \vert$.
We will not always distinguish between a unit vector and its corresponding coordinates in polar or spherical coordinates and depending on the context $\bar{x}$ might thus be a geometric vector or its spherical coordinates $(\theta,\phi)$.

We use standard spherical coordinates with latitude $\theta \in [0,\pi]$ and longitude $\phi \in 2 \pi$. 
The standard area form in polar coordinates is $r \, d\theta \, dr$ and in spherical coordinates the volume form is $r^2 \sin{\theta} \, d\theta \, d\phi \, dr$.
For example, the integral of $f : \R^3 \to \R$ takes in spherical coordinates thus the form
\begin{align*}
  \int_{\R^3} f(x) \, dx 
  &= \int_{\R^+} \int_{S^2} f(r,\theta,\phi) \, r^2 \, \sin{\theta} \, d\theta \, d\phi \, dr
  \\[4pt]
  &= \int_{\R^+} \int_{\theta=0}^{\pi} \int_{\phi=0}^{2\pi} f(r,\theta,\phi) \, r^2 \, \sin{\theta} \, d\theta \, d\phi \, dr
\end{align*}
where $\R^+$ means that the radial variable $\vert x \vert$ is integrated over the positive real line. 
We will use similar notation whenever confusion might arise otherwise. 

\subsection{The Fourier transform and Homogeneous Sobolev Spaces}
\label{sec:preliminaries:fourier}
\label{sec:preliminaries:homoegeneous_sobolev}

The scalar unitary Fourier transform of a function $f \in L_2(\R^n) \cap L_1(\R^n)$ is 
\begin{align*}
  \mathfrak{F}(f)(\xi) = \hat{f}(\xi) = \frac{1}{(2\pi)^{n/2}} \int_{\R^n} f(x) \, e^{-i \langle x , \xi \rangle} \, d x
\end{align*}
with inverse transform
\begin{align*}
  \mathfrak{F}^{-1}(f)(\xi) = f(x) = \frac{1}{(2\pi)^{n/2}} \int_{\widehat{\R}^n} \hat{f}(\xi) \, e^{i \langle \xi , x \rangle} \, d\xi .
\end{align*}


The homogeneous Sobolev space $\dot{L}_2^1(\R^n)$ is defined as
\begin{align*}
  \dot{L}_2^1(\R^n) = \left\{ f \in L_1(\R^n) \ \Big\vert \ \big\Vert [ f ]_{0} \big\Vert_{\dot{L}_2^1} < \infty \right\} 
\end{align*}
with the norm being those induced by the inner product~\cite[Ch. 1.2.1]{Bahouri2011}
\begin{align*}
  \left\langle f , g \right\rangle_{\dot{L}_2^1} = \int_{\widehat{\R}^n} \hat{f}(\xi) \, \hat{g}(\xi) \,  \vert \xi \vert^{2} \, d \xi  
\end{align*}
and $[ f ]_{0}$ denoting the co-sets of functions modulo constant polynomials~\cite[Ch. II.6]{Galdi2011}.
As is customary, the co-sets are usually omitted in the notation.
The so defined space $\dot{L}_2^1(\R^n)$ is a Hilbert space~\cite[Ch. II.6]{Galdi2011}.

\subsection{Spherical harmonics}
\label{sec:preliminaries:sh}

The analogue of the Fourier transform on the sphere is the spherical harmonics expansion.
For any $f \in L_2(S^2)$ it is given by
\begin{align*}
  f(\omega)
  = \sum_{l=0}^{\infty} \sum_{m=-l}^l \langle f(\eta) , y_{lm}(\eta) \rangle \, y_{lm}(\omega)
  = \sum_{l=0}^{\infty} \sum_{m=-l}^l f_{lm} \, y_{lm}(\omega)
\end{align*}
where $\langle \cdot , \cdot \rangle$ denotes the standard (complex) $L_2$ inner product on $S^2$ given by
\begin{align*}
  \left\langle f(\omega) , g(\omega) \right\rangle
  = \int_{\theta=0}^{\pi} \int_{\phi=0}^{2 \pi} f(\theta,\phi) \, g^*(\theta,\phi) \, \sin{\theta} \, d\theta d\phi .
\end{align*}
The spherical harmonics basis functions are given by
\begin{align*}
  y_{lm}(\omega) = y_{lm}(\theta,\phi) = C_{lm} \, P_l^m( \cos{\theta} ) \, e^{i  m  \phi}
\end{align*}
where the $P_l^m(\cdot)$ are associated Legendre polynomials and $C_{lm}$ is a normalization constant so that the $y_{lm}(\omega)$ are orthonormal over the sphere.

The spherical harmonics addition theorem is 
\begin{align}
  \label{eq:sh:addition_thm}
  P_l(\bar{x}_1 \cdot \bar{x}_1) = \frac{4\pi}{2l+1} \sum_{m=-l}^l y_{lm}(\omega_1) \, y_{lm}^*(\omega_2) 
\end{align}
where $\omega_1$ are the spherical coordinates for the unit vector $\bar{x}_1 \in \mathbb{R}^3$ and $\omega_2$ are those for $\bar{x}_2 \in \mathbb{R}^3$.
It follows immediately from the above formula that $P_l(\bar{x}_1 \cdot \bar{x}_1)$ is the reproducing kernel for the space $\mathcal{H}_l$ spanned by all spherical harmonics of fixed $l$.
The $\mathcal{H}_l$ are closed under rotation, i.e. $f \in \mathcal{H}_l \Rightarrow R f \in \mathcal{H}_l$ for $R \in \mathrm{SO}(3)$ and the action is given by $(R f)(\omega) = f(R^T \omega)$. 
In the spherical harmonics domain the rotation is represented by Wigner-D matrices $W_{lm}^{m'}(R)$ that act as
\begin{align*}
  f_{lm}(R) = D_{lm}^{m'} \, f_{lm'}
\end{align*}
where the $f_{lm}(R)$ are the spherical harmonics coefficients of the rotated signal.
When $R$ describes a rotation of the North pole to $\lambda \in S^2$, then $D_{lm}^{0}$ takes a simple form,
\begin{align*}
  D_{lm}^{0}(\lambda) = \sqrt{\frac{4\pi}{2l+1}} \, y_{lm}^*(\lambda) .
\end{align*}

In contrast to the Fourier series, where the product of two basis functions $e^{i m_1 \theta}$ and $e^{i m_2 \theta}$ is immediately given by another Fourier series function, $e^{i (m_1 + m_2) \theta}$, for spherical harmonics the product is not diagonal and instead characterized by Clebsch-Gordon coefficients $C_{l_1,m_1; l_2,m_2}^{l,m}$.
In particular, the product of two functions on the sphere is in the spherical harmonics domain given by
\begin{align*}
  \big( f \cdot g \big)(\omega) = \sum_{lm} \underbrace{\Bigg( \sum_{l_1 m_1} \sum_{l_2 m_2} f_{l_1 m_1} g_{l_2 m_2} G_{l_1,m_1; l_2,m_2}^{l,m} \Bigg)}_{\displaystyle (f \cdot g)_{lm}} \, y_{lm}(\omega)
\end{align*}
where the spherical harmonics product coefficients $G_{l_1,m_1; l_2,m_2}^{l,m}$ are  
\begin{align*}
  G_{l_1,m_1; l_2,m_2}^{l,m} 
  &\equiv 
  \int_{S^2} y_{l_1 m_1}(\omega) \, y_{l_2 m_2}(\omega) \, y_{l m}^*(\omega) \, d\omega
  \\[5pt]
  &=\sqrt{\frac{(2l_1 + 1) (2l_2 + 1)}{4\pi (2l+1)}} C_{l_1,0; l_2,0}^{l,0} \, C_{l_1,m_1; l_2,m_2}^{l,m}  .
\end{align*}
The Clebsch-Gordon coefficients $C_{l_1,m_1; l_2,m_2}^{l,m}$ are sparse and nonzero only when
\begin{align*}
  m = m_1 + m_2 ,
\end{align*}
that is the $m$ parameter is superfluous but conventionally used,  
and 
\begin{align*}
  l_1 + l_2 - l &\geq 0
  \\
  l_1 - l_2 + l &\geq 0
  \\
  -l_1 + l_2 + l &\geq 0 .
\end{align*}

\subsection{Fourier Transform in Polar and Spherical Coordinates}
\label{sec:preliminaries:fourier:polar}

In the plane, the Fourier transform can also be written in polar coordinates by using the Jacobi-Anger formula~\cite{Watson1922},
\begin{align*}
  e^{i \langle \xi , x \rangle} = \sum_{m \in \Z} i^m \, e^{i m (\theta - \hat{\theta})} J_m( \vert \xi \vert \, \vert x \vert ) ,
\end{align*}
that relates the complex exponential in Euclidean and polar coordinates.
In the above equation, $J_m( z )$ is the Bessel function of the first kind and $(\theta,\vert x \vert)$ and $(\hat{\theta},\vert \xi \vert)$ are polar coordinates for the spatial and frequency domains, respectively.
The analogue of the Jacobi-Anger formula in three dimensions is the Rayleigh formula,
\begin{align*}
  e^{i \langle \xi , x \rangle} 
  &= \sum_{l=0}^{\infty} i^l \, (2l+1) \, P_l\big(\bar{\xi} \cdot \bar{x}\big) \, j_l( \vert \xi \vert \, \vert x \vert )
  \\[5pt]
  &= 4 \pi \sum_{l=0}^{\infty} \sum_{m=-l}^l i^l \, y_{lm}\big(\bar{\xi}\big) \, y_{lm}\big(\bar{x}\big) \, j_l( \vert \xi \vert \, \vert x \vert )
\end{align*}
where $j_l(\cdot)$ is the spherical Bessel function and the second line follows by the spherical harmonics addition theorem in Eq.~\ref{eq:sh:addition_thm}.

\section{Admissibility of Polar Wavelets for $L_2(\R^3)$}
\label{sec:polarlets:admissibility:3d}

\begin{proof}[Proof of Proposition~\ref{prop:tight_frame:3d}]
Our proof will be for a scale and directionally discrete frame that is continuous in the spatial domain, i.e. where all translations in $\R^3$ are considered.
  Since our window functions are bandlimited, the result carries over to the discrete case using standard arguments based on the Shannon sampling theorem, see for example~\cite[Sec. 4]{Candes2005b} or~\cite[Sec. IV]{Unser2011}. 
  
We want to show
\begin{align*}
  u
  &= \sum_{j=-1}^{\infty} \sum_{t=1}^{T_j} \big( u \star \psi_j \big) \star \psi_j
\end{align*}
for $u \in L_2(\R^3)$.
Taking the Fourier transform of both sides we have
\begin{align*}
   \hat{u}
   &= \sum_{j=-1}^{\infty} \sum_{t=1}^{T_j} ( \hat{u} \, \hat{\psi}_j) \, \hat{\psi}_j 
   = \hat{u} \, \sum_{j=-1}^{\infty} \sum_{t=1}^{T_j} \hat{\psi}_j^* \, \hat{\psi}_j
\end{align*}
and it thus suffices to show that the scalar window functions satisfy the Calder{\'o}n admissibility condition
\begin{align*}
  \sum_{j=-1}^{\infty} \sum_{t=1}^{T_j} \vert \hat{\psi}_j \vert^2 = 1  \quad , \quad \forall \xi \in \R^3 .
\end{align*}
With the definition of the window functions in Eq.~\ref{eq:polarlets:3d:hat} one obtains
\begin{align*}
  \sum_{j=-1}^{\infty} \sum_{t = 1}^{T_j} \vert \hat{\psi}_j \vert^2
  &= \sum_{j} \sum_{t = 0}^{T_j} \sum_{l_1,m_1} \sum_{l_2 m_2} \kappa_{l_1 m_1}^{j,t} y_{l_1 m_1}\big(\bar{\xi}\big) \, \kappa_{l_2 m_2}^{j,t} y_{l_2 m_2}^*\big(\bar{\xi}\big) \big\vert \hat{h}(2^{-j} \vert \xi \vert) \big\vert^2 .
\end{align*}
Assuming the radial windows satisfy the Calder{\'o}n condition, i.e. 
\begin{align*}
  \sum_{j=-1}^{\infty} \big\vert \hat{h}( 2^{-j} \vert \xi \vert ) \big\vert^2 = 1
\end{align*}
the proposition holds when the product of the angular part evaluates to the identity for every band $j$ and every direction $\bar{\xi}$.
Since $y_{00}$ is the constant function, this means that for every $j$ the projection of the angular part in the above equation onto spherical harmonics has to satisfy
\begin{align*}
 \delta_{l,0} \, \delta_{m,0}
 &= \sum_{t = 0}^{T_j} \left\langle \sum_{l_1,m_1} \kappa_{l_1 m_1}^{j,t} y_{l_1 m_1}\big(\bar{\xi}\big) \, , \, \sum_{l_2,m_2} \kappa_{l_2 m_2}^{j,t} y_{l_2 m_2}^*\big(\bar{\xi}\big) \, , \, y_{lm}(\bar{\xi}) \right\rangle .
 \intertext{Rearranging terms we obtain}
 \delta_{l,0} \, \delta_{m,0}
 &= \sum_{t = 0}^{T_j} \sum_{l_1,m_1} \sum_{l_2,m_2}  \kappa_{l_1 m_1}^{j,t} \, \kappa_{l_2 m_2}^{j,t*} \, \big\langle y_{l_1 m_1}\big(\bar{\xi}\big)  y_{l_2 m_2}^*\big(\bar{\xi}\big) \, , \, y_{lm}^*(\bar{\xi}) \big\rangle .
\end{align*}
The product of two spherical harmonics projected into another spherical harmonic is given by the product coefficients $G_{l_1,m_1;l_2,m_2}^{l,m}$ introduced in ~\ref{sec:preliminaries:sh}. We can hence write
\begin{align*}
 \delta_{l,0} \, \delta_{m,0}
 &= \sum_{t = 0}^{T_j} \sum_{l_1,m_1} \sum_{l_2,m_2} \kappa_{l_1 m_1}^{j,t} \, \kappa_{l_2 m_2}^{j,t} \, G_{l_1,m_1;l_2,-m_2}^{l,m_1-m_2} .
 \end{align*}
Collecting the  $\kappa_{l_i m_i}^{j,t}$ into vectors we obtain the condition in the proposition.
\end{proof}

\end{document}